\newtheorem{definition}{Definition}[section]
\newtheorem{theorem}[definition]{Theorem}
\newtheorem{lemma}[definition]{Lemma}
\newtheorem{remark}[definition]{Remark}
\newtheorem{example}[definition]{Example}
\newtheorem{proposition}[definition]{Proposition}
\newcommand{\C}{\mathbb{C}}
\begin{document}

\title[Non-linear traces on compact operators]{Non-linear traces on the algebra of compact operators and majorization}

\author{Masaru Nagisa}
\address[Masaru Nagisa]{Department of Mathematics and Informatics, Faculty of Science, Chiba University, 
Chiba, 263-8522,  Japan: \ Department of Mathematical Sciences, Ritsumeikan University, Kusatsu, Shiga, 525-8577,  Japan}
\email{nagisa@math.s.chiba-u.ac.jp}
\author{Yasuo Watatani}
\address[Yasuo Watatani]{Department of Mathematical Sciences,
Kyushu University, Motooka, Fukuoka, 819-0395, Japan}
\email{watatani@math.kyushu-u.ac.jp}

\maketitle

\begin{abstract}
We study non-linear traces of Choquet type and Sugeno type on the algebra of compact operators. 
They have certain partial additivities. We show that these partial additivities characterize 
non-linear traces of both Choquet type and Sugeno type respectively. 
There exists a close relation between non-linear traces of Choquet type and majorization theory. 
We study trace class operators for non-linear traces of Choquet type. More generally we discuss Schatten-von Neumann $p$-class 
operators for non-linear traces of Choquet type. We determine when they form Banach spaces.  This is an attempt  
of non-commutative integration theory for non-linear traces of Choquet type on the algebra of compact operators.  
 We also consider the triangle inequality for  non-linear traces of Sugeno type. 

\medskip\par\noindent
AMS subject classification: Primary 46L51, Secondary 47B06, 47B10.

\medskip\par\noindent
Key words: non-linear trace, monotone map, algebra of compact operators,  trace class, 
Schatten-von Neumann $p$-class.

\end{abstract}

\section{Introduction}
We studied several classes of general non-linear positive maps between $C^*$-algebras 
in \cite{nagisawatatani}.  In \cite{nagisawatatani2} we introduced non-linear traces  
of Choquet type and Sugeno type on matrix algebras. In this paper we study non-linear traces  
of Choquet type and Sugeno type on the algebra $K(H)$ of compact operators.  
We study trace class operators for non-linear traces of Choquet type. More generally 
we discuss  Schatten-von Neumann $p$-class operators for non-linear traces of Choquet type. 
This is an attempt of non-commutative integration theory for non-linear traces of Choquet type 
on the algebra of compact operators.

Ando-Choi \cite{A-C} and Arveson \cite{Ar2} initiated the study of  non-linear completely 
positive maps and extend the Stinespring dilation theorem. 
Hiai-Nakamura \cite{H-N} considered a non-linear counterpart of Arveson's 
Hahn-Banach type extension theorem \cite {Ar1} for completely positive linear maps. 
Bel\c{t}it\u{a}-Neeb \cite{B-N} studied non-linear completely positive maps and dilation 
theorems for real involutive algebras.  
Recently Dadkhah-Moslehian \cite{D-M} investigate some properties of non-linear positive maps 
like Lieb maps and the multiplicative domain for 3-positive maps. Dadkhah-Moslehian-Kian 
\cite{D-M-K} study continuity of non-linear positive maps between $C^*$-algebras.

The functional calculus by a continuous positive function  is a typical example of non-linear positive maps.  
See, for example, \cite{bhatia1} and \cite{bhatia2}. 
The functional calculus by operator monotone functions is important to study operator means 
in Kubo-Ando theory in \cite{kuboando}.  
Another important motivation of the study of non-linear positive maps on $C^*$-algebras is
non-additive measure theory, which was initiated by Sugeno \cite{Su} and  Dobrakov \cite{dobrakov}. 
Choquet integrals \cite{Ch} and Sugeno integrals \cite{Su} are studied as non-linear integrals in 
non-additive measure theory. The differences of them are two operations used: 
sum and product for Choquet integrals and maximum and minimum for Sugeno integrals. 
They have partial additivities. 
Choquet integrals have  comonotonic additivity and 
Sugeno integrals have comonotonic F-additivity (fuzzy additivity). Conversely it is known that 
Choquet integrals and Sugeno integrals are characterized by these partial additivities,  see, for example, \cite{De}, 
\cite{schmeidler}, \cite{C-B}, \cite{C-M-R}, \cite{D-G}. 
More precisely, 
comonotonic additivity, positive homogeneity and monotony characterize 
Choquet integrals. Similarly comonotonic F-additivity, F-homogeneity and monotony
characterize Sugeno integrals.
We also note that the inclusion-exclusion integral by Honda-Okazaki \cite{H-O}
for non-additive monotone measures is a general notion of non-linear integrals.

In \cite{nagisawatatani2} we introduced  non-linear traces  
of Choquet type and Sugeno type on matrix algebras and studied a close relation among non-linear traces of 
Choquet type, majorization, unitarily invariant norms and 2-positivity. 
In this paper we study non-linear traces of Choquet type and Sugeno type on the algebra of compact operators,  which are 
operator versions of Choquet integrals and Sugeno integrals.  We 
characterize  non-linear traces of Choquet type and Sugeno type by partial additivities. 
We shall show that non-linear traces of Choquet type have comonotonic additivity and 
non-linear traces of Sugeno type have comonotonic F-additivity (fuzzy additivity). Conversely we can show that 
these partial additivities together with lower semi-continuity in norm,  positive homogeneity, unitary invariance and monotony
characterize  
non-linear traces of both Choquet type and Sugeno type respectively on compact operators $K(H)$.

We also show that there exists a close relation among 
non-linear traces of Choquet type and majorization.  We study trace class operators for non-linear traces of Choquet type.  
We also discuss Schatten-von Neumann $p$-class operators for non-linear traces of Choquet type. 
We determine when they form Banach spaces.  
   This is an attempt of non-commutative integration theory for non-linear traces of Choquet type 
on the algebra of compact operators.  We also consider the triangle inequality for  non-linear traces of Sugeno type.

\vspace{3mm}

This work was supported by JSPS KAKENHI Grant Number \linebreak
JP17K18739.


\section{Non-linear traces of Choquet type}
In this section we study non-linear traces of Choquet type on the algebra $K(H)$ of compact operators 
on a separable infinite dimensional Hilbert space $H$.

\begin{definition} \rm
Let $\Omega$ be a set and ${\mathcal B}$ a ring of sets on $\Omega$, that is, 
${\mathcal B}$ is a family of subsets of $\Omega$ which is closed under the operations 
union $\cup$ and set theoretical difference $\backslash$. Hence  ${\mathcal B}$ is also 
closed under intersection $\cap$. 
A function $\mu: {\mathcal B} \rightarrow [0, \infty]$ is  called a 
{\it monotone measure} if $\mu$ satisfies 
\begin{enumerate}
\item[$(1)$]  $\mu(\emptyset) = 0$, and 
\item[$(2)$] For any $A,B \in {\mathcal B}$, if $A \subset B$, 
then $\mu(A) \leq \mu(B)$. 

\end{enumerate}
\end{definition}

We recall  the discrete Choquet integral with respect to a monotone measure on a finite set 
$\Omega  = \{1,2, \dots, n\}$.  Let ${\mathcal B} = P(\Omega)$ be the set of all 
subsets of $\Omega$ and $\mu:  {\mathcal B} \rightarrow [0, \infty)$ be a finite 
monotone measure, where `finite' means that $\mu(\Omega) < \infty$. 

\begin{definition} \rm 
The  discrete Choquet integral  of $f = (x_1, x_2,\dots, x_n) \in [0,\infty)^n$ 
with respect to a monotone measure $\mu$ on a finite set 
$\Omega  = \{1,2, \dots, n\}$ is defined as follows:  
$$
{\rm (C)}\int f d\mu = \sum_{i=1}^{n-1} (x_{\sigma(i) }- x_{\sigma(i+1)})\mu(A_i) 
+ x_{\sigma(n) }\mu(A_n) , 
$$
where $\sigma$ is a permutation on $\Omega$ such that  
$x_{\sigma(1)} \geq x_{\sigma(2)} \geq  \dots \geq x_{\sigma(n)}$ 
and  $A_i = \{\sigma(1),\sigma(2),\dots,\sigma(i)\}$.  
Here we should note that 
$$
f = \sum_{i=1}^{n-1} (x_{\sigma(i) }- x_{\sigma(i+1)})\chi_{A_i} 
+ x_{\sigma(n) }\chi_{A_n} .
$$
\end{definition}

Let $A = {\mathbb C}^n$ and define 
$({\rm C-}\varphi)_{\mu} :( {\mathbb C}^n)^+ \rightarrow {\mathbb C}^+$ 
by the  Choquet integral $({\rm C-}\varphi)_{\mu}(f) = {\rm (C)}\int f d\mu$.  Then 
$({\rm C-}\varphi)_{\mu}$ is a non-linear positive map and 
it is {\it monotone} in the sense that, if $0 \leq f \leq g$ then 
${\rm (C)}\int f d\mu \leq {\rm (C)}\int g d\mu$. It is {\it positively homogeneous} in the sense that 
${\rm (C)}\int kf d\mu = k  \ {\rm (C)}\int f d\mu$ for a scalar $k \geq 0$.

Real valued functions $f$ and $g$ on a set $\Omega$ are said to be {\it comonotonic} if 
$(f(s) -f(t))(g(s)-g(t)) \geq 0$ for any $s,t \in \Omega$, that is, $f(s) < f(t)$ implies $g(s) \leq g(t)$ 
for any $s,t \in \Omega$. In particular consider when  $\Omega = \{x_1, x_2, \dots,x_n\}$ is a finite $n$ points set. 
Then $f$ and $g$ are comonotonic if and only if there exists a permutation $\sigma$ on $\{1,2,\dots,n\}$ such that 
\begin{gather*}
f(x_{\sigma(1)}) \geq f(x_{\sigma(2)}) \geq \dots \geq  f(x_{\sigma(n)})  \\
 \text{and }
g(x_{\sigma(1)}) \geq g(x_{\sigma(2)}) \geq \dots \geq  g(x_{\sigma(n)}).
\end{gather*}
We should note that comonotonic relation is not transitive and not an equivalent relation. 
The Choquet integral has {\it comonotonic additivity}, that is, 
for any comonotonic pair $f$ and $g$, 
$$
{\rm (C)}\int (f  + g )d\mu= {\rm (C)}\int f d\mu  + {\rm (C)}\int g d\mu . 
$$
The Choquet integral has comonotonic additivity, positive homogeneity and monotony. Conversely 
 comonotonic additivity, positive homogeneity and monotony  characterize the Choquet integral. 

We shall consider an operator version of the discrete Choquet integral and its characterization. 
We introduce non-linear traces of Choquet type on the algebra $K(H)$ of compact operators 
on a separable infinite dimensional Hilbert space $H$.  We denote by $B(H)$ the set of bounded linear operators and 
by $F(H)$ the set of finite rank operators.
Let $K(H)^+$ (respectively $F(H)^+$) be the set of compact operators (respectively the set of finite rank operators) 
which are positive. For a topological space $X$, let $C(X)$ be the set of all continuous functions on $X$ and 
$C(X)^+$ be the set of all non-negative continuous functions on $X$. 

\begin{definition} \rm A non-linear positive map 
$\varphi : K(H)^+ \rightarrow  [0, \infty]$ is called a {\it trace} if 
$\varphi$ is unitarily invariant, that is, 
$\varphi(uau^*) = \varphi(a)$ for any $a \in K(H)^+$ and any unitary 
$u \in B(H)$.  
\begin{itemize}
  \item  $\varphi$ is {\it monotone} if 
$a \leq b$ implies  $\varphi(a) \leq  \varphi(b)$ for any $a,b \in  K(H)^+$. 
  \item  $\varphi$ is {\it positively homogeneous} if 
$\varphi(ka) = k\varphi(a)$ for any $a \in K(H)^+$ and any  scalar $k \geq 0$, 
where we regard $0 \cdot \infty = 0$. 
  \item $\varphi$ is {\it comonotonic additive on the spectrum} if 
$$
\varphi(f(a)  + g(a)) = \varphi(f(a)) + \varphi(g(a)) 
$$  
for any $a \in K(H)^+$ and 
any comonotonic functions $f$ and $g$  in $C(\sigma(a))^+$ with $f(0) = g(0) = 0$, where 
$f(a)$ is a functional calculus of $a$ by $f$ and  $C(\sigma(a))$ is 
the set of continuous functions on the spectrum $\sigma(a)$ of $a$. 
  \item $\varphi$ is {\it monotonic increasing additive on the spectrum} if 
$$
\varphi(f(a)  + g(a)) = \varphi(f(a)) + \varphi(g(a))
$$  
for any $a \in K(H)^+$ and 
any monotone increasing functions $f$ and $g$  in $C(\sigma(a))^+$ with $f(0)=g(0)=0$. 
Then by induction, 
we also have 
$$
 \varphi(\sum_{i=1}^n f_i(a)) = \sum_{i=1}^n \varphi (f_i(a))
$$
for any monotone increasing functions $f_1, f_2, \dots, f_n$ in  $C(\sigma(a))^+$  with  $f_i(0) = 0$ for $i=1,2,\ldots, n$.
\end{itemize}
\end{definition}

Let ${\mathbb N}$ be the set of positive integer and put ${\mathbb N}_0 = {\mathbb N} \cup \{0\}$. 
Let  ${\mathcal R}({\mathbb N})$ be the ring of all finite subsets of ${\mathbb N}$.  
A monotone measure $\mu: {\mathcal R}({\mathbb N}) \rightarrow [0, \infty]$ on  ${\mathbb N}$ is {\it permutation invariant} if 
 $\mu(A) = \mu(\sigma (A))$ for any finite subset $A \subset {\mathbb N}$ and any permutation $\sigma$ on 
 ${\mathbb N}$. 
The following lemma is clear.

\begin{lemma} Let $\alpha: {\mathbb N}_0  \rightarrow [0, \infty)$ be a 
monotone increasing function with $\alpha(0) = 0$, that is, 
$$
0 = \alpha(0) \leq  \alpha(1) \leq  \alpha(2) \leq \cdots \leq  \alpha(n) \leq \cdots .
$$
Define a monotone measure $\mu_{\alpha}: {\mathcal R}({\mathbb N}) \rightarrow [0, \infty]$ on 
${\mathbb N}$ by 
$$
\mu_{\alpha}(A) = \alpha( ^{\#}A) \ \ \ \text{for a finite subset } A \subset {\mathbb N} ,
$$
where $^{\#}A$ is the cardinality of $A$. 

Then $\mu_{\alpha}$ is a permutation invariant 
monotone measure such that  $\mu_{\alpha}(A)$ is finite for any finite subset  $A$ of  $\mathbb{N}$. 
Conversely any  permutation invariant monotone measure 
$\mu : {\mathcal R}({\mathbb N}) \rightarrow [0, \infty]$ on ${\mathbb N}$ such that 
$\mu(A)$ is finite for any finite subset $A$ of  $\mathbb{N}$ has this form. 
\end{lemma}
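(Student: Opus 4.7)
The plan is to verify the two directions separately, both of which are essentially bookkeeping with the definitions.

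For the forward direction, given a monotone increasing $\alpha:\mathbb{N}_0 \to [0,\infty)$ with $\alpha(0)=0$, I would check each property of $\mu_\alpha$ in turn. First, $\mu_\alpha(\emptyset) = \alpha(\#\emptyset) = \alpha(0) = 0$. Next, if $A \subset B$ are finite subsets of $\mathbb{N}$, then $\#A \leq \#B$, and monotonicity of $\alpha$ yields $\mu_\alpha(A) = \alpha(\#A) \leq \alpha(\#B) = \mu_\alpha(B)$, so $\mu_\alpha$ is a monotone measure. Permutation invariance is immediate from the fact that any permutation $\sigma$ of $\mathbb{N}$ preserves cardinality: $\mu_\alpha(\sigma(A)) = \alpha(\#\sigma(A)) = \alpha(\#A) = \mu_\alpha(A)$. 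Finally, since $\alpha$ takes values in $[0,\infty)$, we have $\mu_\alpha(A) < \infty$ for every finite $A$.

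For the converse, suppose $\mu : \mathcal{R}(\mathbb{N}) \to [0,\infty]$ is a permutation invariant monotone measure that is finite on finite sets. I would define
\[
\alpha(n) := \mu(\{1,2,\dots,n\}) \quad (n \geq 1), \qquad \alpha(0) := \mu(\emptyset) = 0.
\]
Since $\mu$ is finite on finite sets, $\alpha$ lands in $[0,\infty)$. Monotonicity of $\alpha$ follows from $\{1,\dots,n\} \subset \{1,\dots,n+1\}$ together with the monotonicity of $\mu$. To recover $\mu = \mu_\alpha$, take any finite $A \subset \mathbb{N}$ with $\#A = n$; choose any permutation $\sigma$ of $\mathbb{N}$ with $\sigma(\{1,\dots,n\}) = A$ (which exists since $A$ has cardinality $n$). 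Permutation invariance then gives $\mu(A) = \mu(\{1,\dots,n\}) = \alpha(n) = \alpha(\#A) = \mu_\alpha(A)$.

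There is no real obstacle here; the statement is labeled ``clear'' in the paper for good reason. The only point that deserves a moment of care is the converse, where one must be explicit that permutation invariance of $\mu$ under permutations of the ambient set $\mathbb{N}$ is exactly what is needed to conclude that $\mu(A)$ depends only on $\#A$, and hence to exhibit the required $\alpha$. The hypothesis that $\mu$ is finite on finite sets is what guarantees $\alpha$ takes values in $[0,\infty)$ rather than $[0,\infty]$, matching the definition of the class of $\alpha$'s under consideration.
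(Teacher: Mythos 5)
Your verification is correct and is exactly the routine argument the paper has in mind (the paper omits the proof, stating the lemma is clear): cardinality determines $\mu_\alpha$ in the forward direction, and permutation invariance forces $\mu(A)$ to depend only on $\#A$ in the converse. No gaps.
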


\begin{definition} \rm
Let $\alpha: {\mathbb N}_0  \rightarrow [0, \infty)$ be a 
monotone increasing function with $\alpha(0) = 0$, that is, 
$$
0 = \alpha(0) \leq  \alpha(1) \leq  \alpha(2) \leq \cdots \leq  \alpha(n) \leq \cdots .
$$
We denote by 
$\mu_{\alpha}$ the associated permutation invariant 
monotone measure on ${\mathbb N}$. 
Define 
$\varphi_{\alpha} : K(H)^+ \rightarrow  [0, \infty]$
as follows:  Any $a \in K(H)^+$ can be written as 
$a = \sum_{n=1}^{\infty} \lambda_n(a)p_n$, where each $p_n$ is a one dimensional spectral projection of $a$ 
with  $\sum_{n=1}^{\infty} p_n \leq I$, and 
$$
\lambda(a) := (\lambda_1(a),\lambda_2(a),\ldots, \lambda_n(a), \ldots)
$$
is the list of the 
eigenvalues of $a$ in decreasing order :
$\lambda_1(a) \geq \lambda_2(a) \geq \cdots \geq \lambda_n(a) \geq \cdots $ with 
counting multiplicities, where  the sequence $(\lambda_n(a))_n$ converges to zero.  
In order to make the sequence uniquely determined, we use the following convention : 
If $a \in  K(H)^+$ is not a finite-rank operator, then each $\lambda_n(a) \not= 0$. 
If $a \in  K(H)^+$ is a finite-rank operator, $\lambda(a)$ is given by adding zeroes to the list of eigenvalues of $a$. 
In both case, the spectrum $\sigma(a)$ of $a$ is given by 
$\sigma(a) = \cup_{n = 1}^{\infty} \{\lambda_n(a)\}   \cup \{0\}$.  We call such 
$\lambda(a)$ the {\it eigenvalue sequence} of $a \in  K(H)^+$. 

Let
\begin{align*}
\varphi_{\alpha}(a) &= \sum_{i=1} ^{\infty} ( \lambda_i(a)- \lambda_{i+1}(a))\mu_{\alpha}(A_i) \\
&= \sum_{i=1} ^{\infty} ( \lambda_i(a)- \lambda_{i+1}(a)) \alpha( ^{\#}A_i) \\
& =  \sum_{i=1} ^{\infty} ( \lambda_i(a)- \lambda_{i+1}(a)) \alpha(i) ,
\end{align*}
where $A_i = \{1,2,\dots,i \}$.
We call $\varphi_{\alpha}$ the non-linear trace of Choquet type associated with $\alpha$.  
Note that $\varphi_{\alpha}$ is lower semi-continuous on $K(H)^+$ in norm in the following sense:
When $\varphi_{\alpha}(a) < \infty$, for any $\epsilon > 0$ there exists $\delta > 0$ such that for any 
$x \in  K(H)^+$, if $\| x - a\| < \delta$, then $\varphi_{\alpha}(a) - \epsilon < \varphi_{\alpha}(x).$  
When $\varphi_{\alpha}(a) = \infty$,  for any $M > 0$ there exists $\delta > 0$ such that for any 
$x \in  K(H)^+$, if $\| x - a\| < \delta$, then $M< \varphi_{\alpha}(x).$  
In fact, let 
$$
S_n(a) = \sum_{i=1} ^{n} ( \lambda_i(a)- \lambda_{i+1}(a)) \alpha(i).
$$
Since 
each $\lambda_i$ is norm continuous on $K(H)^+$,  $S_n: K(H)^+ \rightarrow  [0, \infty)$ 
is also norm continuous on  $K(H)^+$.  Moreover 
$$
\varphi_{\alpha}(a)  = \sup \{S_n(a) \ | \ n \in  {\mathbb N} \}.
$$
Therefore $\varphi_{\alpha}$ is lower semi-continuous on $K(H)^+$ in norm. 
\end{definition}

\begin{example} \rm  
We sometimes denote $\alpha$ by $\alpha = (\alpha(0), \alpha(1),\alpha(2), \dots)$. 
\begin{enumerate}
\item[(1)] If $\alpha =(0,1,2,3,\dots)$, then 
$\varphi_{\alpha}(a) = {\rm Tr}(a) = \sum_{i=1}^{\infty} {\ \lambda_i(a)}$ is the usual linear trace. 
\item[(2)] If $\alpha =(0,1,1,1,\dots)$, then $\varphi_{\alpha}(a) = \lambda_1(a)$.
\item[(3)] If $\alpha =(\overbrace{0,\ldots,0}^i, 1,1,\dots)$ $(i=2,3,\ldots)$, 
then $\varphi_{\alpha}(a) = \lambda_i(a)$.
\item[(4)] If $\alpha = (0,1,2,\dots,k,k,\dots)$, then $\varphi_{\alpha}(a) =  \sum_{i=1} ^{k}  \lambda_i(a)$. 
\end{enumerate}
\end{example}

We can describe non-linear traces of Choquet type in another way. 
\begin{proposition} 
Let $\alpha: {\mathbb N}_0  \rightarrow [0, \infty)$ be a 
monotone increasing function with $\alpha(0) = 0$. Put 
$c_n = \alpha(n) - \alpha(n-1) \geq 0$ and $c_1 = \alpha(1) - \alpha(0) = \alpha(1)$. 
Then $\alpha(n) = \sum_{i=1}^n c_i$ and 
the non-linear trace $\varphi_{\alpha}$ of Choquet type associated with $\alpha$ is also 
described as follows: For  $a \in K(H)^+$, 
 $$
 \varphi_{\alpha}(a) = \sum_{i=1} ^{\infty}  \lambda_i(a) (\alpha(i) - \alpha(i-1)) 
 = \sum_{i=1} ^{\infty}  \lambda_i(a) c_i .
 $$
 Conversely for any sequence $c =(c_n)_n$ with each $c_i \in [0,\infty)$, the non-linear trace $\varphi$ 
 defined by 
 $$
  \varphi(a) = \sum_{i=1} ^{\infty}  \lambda_i(a) c_i
 $$
 is a trace of Choquet type associated with some $\alpha$ such that  $ \alpha(n) = \sum_{i=1}^n c_i$. 
\end{proposition}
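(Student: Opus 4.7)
The plan is to prove the identity by a careful Abel (summation-by-parts) argument on the partial sums, then handle the converse direction by direct construction.

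First I would fix $a\in K(H)^+$ with eigenvalue sequence $\lambda(a)=(\lambda_i)_i$ and set
\[
S_N=\sum_{i=1}^{N}(\lambda_i-\lambda_{i+1})\alpha(i),\qquad
T_N=\sum_{i=1}^{N}\lambda_i c_i,
\]
so that $\varphi_\alpha(a)=\lim_N S_N$ by the definition, and the right-hand side of the proposition equals $\lim_N T_N$. A short rearrangement, using $c_i=\alpha(i)-\alpha(i-1)$ and $\alpha(0)=0$, yields the key identity
\[
T_N = S_{N-1} + \alpha(N)\lambda_N.
\]
All terms are non-negative (since $\alpha$ is increasing and $\lambda_i\geq \lambda_{i+1}\geq 0$), so both $S_N$ and $T_N$ are monotone increasing. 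In particular $T_N\geq S_{N-1}$, giving $\lim T_N\geq \lim S_N$ automatically.

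The more delicate direction is the reverse inequality, which is also where the only real subtlety lies: I need $\alpha(N)\lambda_N\to 0$ whenever $\lim S_N<\infty$, otherwise $T_N$ might a priori exceed $S_\infty$. For this I would use the tail estimate
\[
S_N - S_{k-1} \;=\; \sum_{i=k}^{N}(\lambda_i-\lambda_{i+1})\alpha(i)\;\geq\;\alpha(k)\sum_{i=k}^{N}(\lambda_i-\lambda_{i+1}) \;=\;\alpha(k)(\lambda_k-\lambda_{N+1}).
\]
Letting $N\to\infty$ and using $\lambda_{N+1}\to 0$ (compactness of $a$) gives $\alpha(k)\lambda_k \leq S_\infty - S_{k-1}$, which is bounded by $S_\infty$ and tends to $0$ as $k\to\infty$ provided $S_\infty<\infty$. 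Combined with the identity $T_N=S_{N-1}+\alpha(N)\lambda_N$, this forces $\lim T_N = \lim S_N$. If instead $S_\infty=\infty$, the inequality $T_N\geq S_{N-1}$ already gives $\lim T_N=\infty$, so in all cases $\lim T_N=\lim S_N=\varphi_\alpha(a)$, as required.

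For the converse, given any sequence $c=(c_n)_n$ with $c_i\in[0,\infty)$, I would simply set $\alpha(0)=0$ and $\alpha(n)=\sum_{i=1}^n c_i$; this is manifestly a monotone increasing map $\mathbb{N}_0\to[0,\infty)$ with $\alpha(0)=0$, hence defines a non-linear trace $\varphi_\alpha$ of Choquet type, and the identity just proved shows $\varphi_\alpha(a)=\sum_i \lambda_i(a)c_i$. The main obstacle is the tail argument for $\alpha(N)\lambda_N\to 0$; everything else is routine Abel summation.
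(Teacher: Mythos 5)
Your proof is correct, but it takes a different route from the paper. The paper writes $\lambda_j(a)=\sum_{i\ge j} r_i$ with $r_i=\lambda_i(a)-\lambda_{i+1}(a)\ge 0$ and then interchanges the order of summation in the non-negative double series $\sum_{(i,j),\,j\le i} r_i c_j$ (a Tonelli-type rearrangement, valid regardless of whether the value is finite or $+\infty$), which gives $\sum_i r_i\alpha(i)=\sum_j \lambda_j(a)c_j$ in one stroke. You instead work with the partial sums, using the Abel identity $T_N=S_{N-1}+\alpha(N)\lambda_N(a)$, and then control the boundary term via the tail estimate $\alpha(k)\lambda_k(a)\le S_\infty-S_{k-1}$, which correctly forces $\alpha(N)\lambda_N(a)\to 0$ when $S_\infty<\infty$ and handles the divergent case through monotonicity. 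Both arguments are complete; the paper's is shorter and treats finite and infinite values uniformly, while yours isolates exactly where the subtlety sits (the boundary term) and yields the extra piece of information that $\alpha(N)\lambda_N(a)\to 0$ whenever $\varphi_\alpha(a)<\infty$, which is a mild bonus. The converse direction is handled identically in both: set $\alpha(n)=\sum_{i=1}^n c_i$ and invoke the identity just proved.
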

\begin{proof}
For $a \in K(H)^+$, 
put $r_i := \lambda_i(a) - \lambda_{i+1}(a) \geq 0$. Then  we have that $ \sum_{i=n} ^{\infty} r_i  = \lambda_n(a)$. 
Let us consider the double series $\sum \sum_{(i,j) \in \Lambda } r_i c_j $
with non-negative terms, 
where index $(i,j)$ runs in the lower triangular region
$\Lambda = \{(i,j) \in {\mathbb N}^2 | i \leq j\}$. 
Since the value of this double series
does not depend on the order of taking sums, we have that 
$$
\sum \sum_{(i,j) \in \Lambda } r_i c_j  = \sum_{i=1} ^{\infty}\sum_{j=1} ^{i }  r_i c_j 
= \sum_{j=1} ^{\infty}\sum_{i=j} ^{\infty }  r_i c_j 
$$
That is, $\sum_{i=1} ^{\infty} r_i\alpha(i) =  \sum_{j=1} ^{\infty} \lambda_j(a)c_j$.
Therefore we have that 
$$
 \varphi_{\alpha}(a)  =  \sum_{i=1} ^{\infty} ( \lambda_i(a)- \lambda_{i+1}(a)) \alpha(i) 
 = \sum_{i=1} ^{\infty}  \lambda_i(a) c_i. 
$$
\end{proof}

We shall characterize non-linear traces of Choquet type. 

\begin{theorem} 
Let $\varphi : K(H)^+ \rightarrow  [0, \infty]$ be a non-linear 
positive map.  Then the following conditions are equivalent:
\begin{enumerate}
\item[$(1)$]  $\varphi$ is a non-linear trace $\varphi = \varphi_{\alpha}$  of Choquet type associated with  
a monotone increasing function $\alpha: {\mathbb N}_0 \rightarrow [0, \infty)$  with $\alpha(0) = 0$.
\item[$(2)$] $\varphi $ is monotonic increasing additive on the spectrum, unitarily invariant, monotone, 
positively homogeneous and lower semi-continuous on $K(H)^+$ in norm.  Moreover $\varphi(a) < \infty  $ 
for any finite rank operator $a \in F(H)^+$. 
\item[$(3)$] $\varphi $ is comonotonic additive on the spectrum, unitarily invariant, monotone, 
positively homogeneous and lower semi-continuous on $K(H)^+$ in norm. Moreover $\varphi(a) < \infty  $ 
for any finite rank operator $a \in F(H)^+$. 
\end{enumerate}
\end{theorem}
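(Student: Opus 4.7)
The plan is to prove the cycle $(1) \Rightarrow (3) \Rightarrow (2) \Rightarrow (1)$, with the last implication carrying essentially all of the content.

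For $(1) \Rightarrow (3)$, I would verify each property of $\varphi_\alpha$ in turn: unitary invariance, monotony (via Weyl's inequality $\lambda_i(a) \leq \lambda_i(b)$ when $a \leq b$), positive homogeneity, lower semi-continuity in norm, and finiteness on $F(H)^+$ are all either immediate or already remarked upon in the text following Definition 2.5 and Proposition 2.7. The substantive point is comonotonic additivity on the spectrum. Writing $a = \sum_j \lambda_j(a) p_j$, both $f(a)$ and $g(a)$ are diagonalized in the same basis $\{p_j\}$; comonotony of $f$ and $g$ on $\sigma(a)$ ensures that a single permutation of the $p_j$'s simultaneously orders the eigenvalue sequences of $f(a)$, $g(a)$, and $f(a)+g(a)$ in decreasing fashion, so the claim reduces to the classical comonotonic additivity of the discrete Choquet integral with respect to $\mu_\alpha$. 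For $(3) \Rightarrow (2)$, I would simply note that any two monotone increasing functions on $\sigma(a) \subseteq [0,\infty)$ are automatically comonotonic, so monotonic increasing additivity is a special case of comonotonic additivity on the spectrum.

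The substantive direction is $(2) \Rightarrow (1)$. Given $\varphi$ as in (2), I would define $\alpha(n) := \varphi(q_n)$ for any rank-$n$ projection $q_n$ (with $q_0 = 0$): unitary invariance makes this well-defined, monotony makes $\alpha$ monotone increasing, positive homogeneity gives $\alpha(0) = \varphi(0) = 0$, and finiteness on $F(H)^+$ gives $\alpha(n) < \infty$. The main task is to show $\varphi = \varphi_\alpha$ on $K(H)^+$. For $a \in F(H)^+$ of rank $N$ with $a = \sum_{i=1}^N \lambda_i p_i$ and $\lambda_1 \geq \cdots \geq \lambda_N > 0$, I would use the layer-cake decomposition $a = \sum_{i=1}^N g_i(a)$, where $g_i \in C(\sigma(a))^+$ is defined by $g_i(\lambda_j) = \lambda_i - \lambda_{i+1}$ for $j \leq i$ and $0$ otherwise (with the convention $\lambda_{N+1} := 0$). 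Each $g_i$ is monotone increasing on $\sigma(a)$ with $g_i(0)=0$, and $g_i(a) = (\lambda_i - \lambda_{i+1})(p_1 + \cdots + p_i)$ is a scalar multiple of a rank-$i$ projection. Applying the iterated monotonic increasing additive formula, unitary invariance, and positive homogeneity yields $\varphi(a) = \sum_{i=1}^N (\lambda_i - \lambda_{i+1})\alpha(i) = \varphi_\alpha(a)$.

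For general infinite-rank $a \in K(H)^+$, I would set $a_n := \sum_{i=1}^n \lambda_i(a) p_i$, so that $a_n \leq a$ and $\|a - a_n\| = \lambda_{n+1}(a) \to 0$, and the finite-rank step gives $\varphi(a_n) = \varphi_\alpha(a_n) = S_n(a) + \lambda_{n+1}(a)\alpha(n)$. Monotony of $\varphi$ yields $\varphi(a) \geq \varphi(a_n) \geq S_n(a)$ for all $n$, hence $\varphi(a) \geq \varphi_\alpha(a)$. For the reverse inequality I would split on whether $\varphi_\alpha(a)$ is finite; the infinite case is automatic from what has just been shown, and in the finite case the telescoping bound $\lambda_n(a)\alpha(n) \leq \sum_{i \geq n}(\lambda_i(a) - \lambda_{i+1}(a))\alpha(i)$ forces $\lambda_n(a)\alpha(n) \to 0$ as $n \to \infty$, so $\varphi(a_n) \to \varphi_\alpha(a)$, and lower semi-continuity applied to the norm convergence $a_n \to a$ delivers $\varphi(a) \leq \varphi_\alpha(a)$. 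The step I expect to be most delicate is precisely this tail control $\lambda_n(a)\alpha(n) \to 0$ in the infinite-rank case: without it the lower semi-continuity cannot close the gap between $\varphi(a_n)$ and $\varphi_\alpha(a)$. Everything else in the argument is essentially bookkeeping with the functional calculus and the layer-cake identity.
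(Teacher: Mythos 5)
Your proposal is correct and follows essentially the same route as the paper: the same definition $\alpha(n)=\varphi(\text{rank-}n\text{ projection})$, the same layer-cake decomposition of a finite-rank operator into monotone increasing step functions of it, the same approximation $a_n\uparrow a$ in norm combined with monotony and lower semi-continuity, and the same common-rearrangement argument behind comonotonic additivity. The only cosmetic differences are that you prove the cycle $(1)\Rightarrow(3)\Rightarrow(2)\Rightarrow(1)$ where the paper proves four separate implications, and that in $(2)\Rightarrow(1)$ you close the limit via the explicit tail estimate $\lambda_n(a)\alpha(n)\to 0$ together with lower semi-continuity of $\varphi$, whereas the paper simply invokes lower semi-continuity and monotonicity of both $\varphi$ and $\varphi_{\alpha}$.
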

\begin{proof}
(1)$\Rightarrow$(2)  Assume that $\varphi$ is a non-linear trace $\varphi = \varphi_{\alpha}$  of Choquet type associated with  
a monotone increasing function $\alpha$. 
For $a \in K(H)^+$, 
let 
$\lambda(a) = (\lambda_1(a),\lambda_2(a),\ldots, \lambda_n(a), \ldots)$ be the list of the 
eigenvalues of $a$ in decreasing order :
$\lambda_1(a) \geq \lambda_2(a) \geq \cdots \geq \lambda_n(a) \geq \cdots $ with 
counting multiplicities, where  the sequence $(\lambda_n(a))_n$ converges to zero.  We have  
$a = \sum_{n=1}^{\infty} \lambda_n(a)p_n$, where each $p_n$ is a one dimensional spectral projection of $a$. 
Then the spectrum $\sigma(a) = \{\lambda_i(a) \ | \ i = 1,2, \dots \} \cup \{0\}$. 
For any monotone increasing functions $f$ and $g$  in $C(\sigma(a))^+$ with $f(0) = g(0) = 0$, 
we have the  spectral decompositions 
\begin{gather*}
f(a) = \sum_{i=1}^{\infty} f(\lambda_i(a)) p_i, \ \ \ g(a) = \sum_{i=1}^{\infty} g(\lambda_i(a)) p_i,   \\
\text{and } (f+g)(a) = \sum_{i=1}^{\infty}(f(\lambda_i(a))+ (g(\lambda_i(a))) p_i. 
\end{gather*}
Since $f+g$ is also monotone increasing on $\sigma(a)$ as well as $f$ and $g$, we have 
\begin{gather*}
f(\lambda_1(a)) \geq f(\lambda_2(a)) \geq \cdots \geq f(\lambda_n(a)) \geq \cdots,    \\
g(\lambda_1(a)) \geq g(\lambda_2(a)) \geq \cdots \geq g(\lambda_n(a))\geq \cdots,   \\
\text{and }  (f+g)(\lambda_1(a)) \geq (f+g)(\lambda_2(a)) \geq \cdots \geq (f+g)(\lambda_n(a))\geq \cdots.
\end{gather*}
Since $f$ and $g$  in $C(\sigma(a))^+$ with $f(0) = g(0) = 0$,  the sequences  $(f(\lambda_n(a)))_n$,  
$(g(\lambda_n(a)))_n$ and $((f+g)(\lambda_n(a)))_n$ converge to $0$. Hence  $f(a), g(a)$  and 
$f(a) + g(a) = (f+g)(a)$ are positive compact operators.   
Therefore
\begin{gather*}
\varphi_{\alpha}(f(a)) = \sum_{i=1} ^{\infty} ( f(\lambda_i(a))- f(\lambda_{i+1}(a))) \alpha(i),  \\
\varphi_{\alpha}(g(a)) = \sum_{i=1} ^{\infty} ( g(\lambda_i(a))- g(\lambda_{i+1}(a))) \alpha(i) , 
\end{gather*}
and 
\begin{align*}
  & \varphi_{\alpha}(f(a)+g(a))  = \varphi_{\alpha}((f+g)(a)) \\
 = & \sum_{i=1} ^{\infty} ( (f+g)(\lambda_i(a))- (f+g)(\lambda_{i+1}(a))) \alpha(i) \\
 = & \varphi_{\alpha}(f(a))  + \varphi_{\alpha}(g(a)) . 
\end{align*}
Thus $\varphi_{\alpha}$ is monotonic increasing additive on the spectrum. 

For $a,b \in K(H)^+$, let 
$\lambda(a) = (\lambda_1(a),\lambda_2(a),\dots, \lambda_n(a), \dots)$ and 
$\lambda(b) = (\lambda_1(b),\lambda_2(b),\dots,$ $\lambda_n(b), \dots )$ be  the 
eigenvalue sequences of $a$ and $b$. 
Assume that $a \leq b$. 
By the mini-max  principle for eigenvalues, we have that 
$\lambda_i(a) \leq \lambda_i(b)$ for $i = 1,2,3,\dots$.  Hence 
$$
\varphi_{\alpha}(a) 
= \sum_{i=1} ^{\infty}  \lambda_i(a)c_i 
\leq \sum_{i=1} ^{\infty}  \lambda_i(b)c_i 
= \varphi_{\alpha}(b). 
$$
Thus $\varphi_{\alpha}$ is monotone. 

For a positive scalar $k$,  $ka = \sum_{i=1}^{\infty} k \lambda_i(a) p_i$ and 
$$
\lambda(ka) = (k\lambda_1(a),k\lambda_2(a),\dots,k \lambda_n(a), \dots )
$$ 
is the list of the 
eigenvalues of $ka$ in decreasing order, 
$$
\varphi_{\alpha}(ka) = \sum_{i=1} ^{\infty} ( k \lambda_i(a)- k \lambda_{i+1}(a)) \alpha(i) 
 = k \varphi_{\alpha}(a). 
$$
Thus $\varphi_{\alpha}$ is positively homogeneous. 
 It is clear 
that $\varphi_{\alpha}$ is unitarily invariant by definition. We already showed that $\varphi_{\alpha}$ is 
lower semi-continuous on $K(H)^+$ in norm. Moreover it is trivial that $\varphi(a) < \infty  $ 
for any finite rank operator $a \in F(H)^+$ by definition. 

(2)$\Rightarrow$(1)  Assume that $\varphi $ is monotonic increasing additive on the spectrum, unitarily invariant, monotone and 
positively homogeneous. We also assume that $\varphi $ is lower semi-continuous on $K(H)^+$ in norm and 
 $\varphi(a) < \infty  $ for any finite rank operator $a \in F(H)^+$.
Let $I = \sum_{n=1}^{\infty} p_n$ be the decomposition of the identity by 
minimal projections. Define  a function $\alpha: {\mathbb N}_0 \rightarrow [0, \infty)$  with $\alpha(0) = 0$
by $\alpha (i) = \varphi(p_1+ p_2 + \dots + p_i)$.  Since  $\varphi $ is unitarily invariant, $\alpha$ does not depend 
on the choice of minimal projections. Since  $\varphi $ is monotone, $\alpha$ is monotone increasing. 
For a positive finite rank operator $b \in F(H)^+$, 
let 
$b = \sum_{i=1}^{n} \lambda_i(b) q_i$
be the spectral decomposition of $b$, where 
$\lambda(b) = (\lambda_1(b),\lambda_2(b),\dots, \lambda_n(b), 0, \dots)$ is the 
eigenvalue sequence of $b$

Now define functions  $f_1,f_2,\dots, f_n \in C(\sigma(b))^+$  by 
$$
f_i (x) = \begin{cases}    \lambda_i(b) - \lambda_{i+1}(b), \quad & \text{ if } x\in  \{ \lambda_1(b), \lambda_2(b)\dots, \lambda_i(b) \}  \\
                                 0  &  \text{ if }  x\in \{ \lambda_{i+1}(b), \dots, \lambda_n(b), 0 \}
\end{cases}
$$
for $i = 1,2,\dots, n-1$ and
$$
f_n (x) = \begin{cases}    \lambda_n(b)  & \text{ if } x\in  \{ \lambda_1(b), \lambda_2(b)\dots, \lambda_n(b) \}  \\
                                 0  &  \text{ if }  x = 0 . 
\end{cases}
$$
Then  $f_1, f_2, \dots, f_n$ are monotone increasing functions in  $C(\sigma(b))^+$ with $f_i(0) = 0$ for $i=1,\dots,n$ 
such that 
$$
f_1(x) + f_2(x) + \dots +f_n(x) = x  \ \ \ \text{ for } x \in \{ \lambda_1(b),\lambda_2(b),\dots, \lambda_n(b), 0 \} .
$$
Therefore 
$$
f_1(b) + f_2(b) + \dots +f_n(b) = b .
$$
Moreover we have that 
$$
f_i(b) =  (\lambda_i(b) - \lambda_{i+1}(b))(q_1 + q_2 + \dots +q_i) \quad \text{ for } i=1,2,\dots, n-1
$$
and $f_n (b) =  \lambda_n(b) (q_1 + q_2 + \dots +q_n).$

Since $\varphi $ is monotonic increasing additive on the spectrum and 
positively homogeneous, we have that 
\begin{align*}
   & \varphi(b)  = \varphi(f_1(b) + f_2(b) + \dots +f_n(b)) \\
  =&  \varphi(f_1(b)) + \varphi(f_2(b)) + \dots +\varphi(f_n(b)) \\
  =&  \sum_{i=1} ^{n-1} \varphi( ( \lambda_i(b)- \lambda_{i+1}(b))(q_1 + q_2 + \dots q_i)) 
+ \varphi(\lambda_n(b) (q_1 + q_2 + \dots +q_n)\\
  =&  \sum_{i=1} ^{n-1}  ( \lambda_i(b)- \lambda_{i+1}(b))\varphi(q_1 + q_2 + \dots q_i) 
+ \lambda_n(b)\varphi (q_1 + q_2 + \dots +q_n)\\
  =&  \sum_{i=1} ^{n-1}  ( \lambda_i(b)- \lambda_{i+1}(b))\alpha(i) 
+ \lambda_n(b)\alpha(n) = \varphi_{\alpha}(b).
\end{align*}
Next, 
for any positive compact operator $a \in K(H)^+$,  let 
$\lambda(a) = (\lambda_1(a),\lambda_2(a),\dots, \lambda_n(a), \dots)$ be the 
eigenvalue sequence of $a$.  
We have  
$a = \sum_{n=1}^{\infty} \lambda_n(a)p_n$, where each $p_n$ is a one dimensional spectral projection of $a$.  
Put $b_n = \sum_{k=1}^{n} \lambda_k(a)p_k$.  Then the sequence $(b_n)_n$ is increasing and converges 
to $a$ in norm.  Since both $\varphi$ and $\varphi_{\alpha}$  are lower semi-continuous on $K(H)^+$ in norm, monotone maps 
and 
$\varphi(b_n) = \varphi_{\alpha}(b_n)$, we conclude that  $ \varphi(a) = \varphi_{\alpha}(a)$ by 
taking their limits. 
Therefore $\varphi$ is equal to the  non-linear trace $\varphi_{\alpha}$  of Choquet type associated with  
a monotone increasing function $\alpha$. 

(3)$\Rightarrow$(2)  It is clear from the fact that any monotone increasing functions $f$ and $g$  in $C(\sigma(a))$ 
are comonotonic.

(1)$\Rightarrow$(3)  For $a \in K(H)^+$, 
let $a = \sum_{i=1}^{\infty} \lambda_i(a) p_i$
be the spectral decomposition of $a$, where 
$\lambda(a) = (\lambda_1(a),\lambda_2(a),\dots, \lambda_n(a),\dots )$ is the eigenvalue sequence of $a$. 
Since $a$ is a positive compact operator, for any $\varepsilon > 0$,  
$\{n \in {\mathbb N} | \lambda_n(a) > \varepsilon \}$  
is a finite set. Therefore by induction, we can show  that 
for any comonotonic functions $f$ and $g$  in $C(\sigma(a))$ with $f(0) = g(0) = 0$, there exists an injective map 
$\tau: {\mathbb N} \rightarrow {\mathbb N}$ such that 
\begin{gather*}
f(\lambda_{\tau(1)}(a)) \geq f(\lambda_{\tau(2)}(a)) \geq \dots \geq f(\lambda_{\tau(n)}(a)), \dots  \\ 
g(\lambda_{\tau(1)}(a)) \geq g(\lambda_{\tau(2)}(a)) \geq \dots \geq g(\lambda_{\tau(n)}(a)), \dots \\
\intertext{and} 
  f(a) = \sum_{i=1}^{\infty} f(\lambda_{\tau(i)}(a)) p_{\tau(i)}, \quad
  g(a) = \sum_{i=1}^{\infty} g(\lambda_{\tau(i)}(a)) p_{\tau(i)}.
\end{gather*}
Moreover $\lambda_{\tau(i)}(a)$ converges to $0$. 

For example, consider the case that $f(\lambda_n) >0$ and $g(\lambda_n) >0$ for any $n \in {\mathbb N}$. 
Put $ {\mathbb N}_1 = {\mathbb N}$.  Let 
$M_f ({\mathbb N}_1) = \max \{f(\lambda_n) | n \in {\mathbb N}_1 \}$ and  
$M_g ({\mathbb N}_1) = \max \{g(\lambda_n) | n \in {\mathbb N}_1\}$.
Define 
\begin{gather*}
S_f(1): = \{n \in {\mathbb N} | f(\lambda_n) = M_f ({\mathbb N}_1) \} \not= \emptyset \\
  \text{ and  }  S_g(1): = \{n \in {\mathbb N} | f(\lambda_n) = M_g ({\mathbb N}_1) \} \not= \emptyset.
\end{gather*}  
Then there exists a natural number $n_1 \in S_f(1) \cap  S_g(1)$.  In fact, on the contrary suppose that 
$S_f(1) \cap  S_g(1) = \emptyset$. Choose $i \in  S_f(1)$ and $j \in   S_g(1)$. Then 
$ f(\lambda_i) > f(\lambda_j)$ and $ g(\lambda_i) < g(\lambda_j)$.  This contradicts to that $f$ and $g$ are 
comonotonic.  We define $\tau(1) = n_1$. 

Inductively put ${\mathbb N}_{k+1} = {\mathbb N}\backslash \{n_1,n_2, ..., n_k\}$. 
Let $M_f ({\mathbb N}_{k+1}) =\max $ $\{ f(\lambda_n) | n \in {\mathbb N}_{k+1} \}$ and 
$M_g ({\mathbb N}_{k+1}) = \max \{g(\lambda_n) | n \in { \mathbb N}_{k+1}\}$. 
Define 
\begin{gather*}
S_f(k + 1): = \{n \in {\mathbb N}_{k+1} | f(\lambda_n) = M_f ({\mathbb N}_{k+1}) \} \not= \emptyset  \\
 \text{ and }  S_g(k + 1): = \{n \in {\mathbb N}_{k+1} | f(\lambda_n) = M_g ({\mathbb N}_{k+1}) \} \not= \emptyset. 
\end{gather*}
Then there exists a natural number $n_{k+1} \in S_f(k + 1) \cap  S_g(k + 1)$. We define $\tau(k + 1) = n_{k + 1}$. 
In this way we can construct $\tau: {\mathbb N} \rightarrow {\mathbb N}$, which is a desired injective map. 

In the other cases  that $f(\lambda_n) = 0$ or  $g(\lambda_m) = 0$ for some $n, m \in {\mathbb N}$, 
 $\tau$ can be  similarly constructed  by carefully avoiding such $n$ and $m$. 

Considering this fact, we can prove  (1)$\Rightarrow$(3) similarly as (1)$\Rightarrow$(2). 
\end{proof}

\begin{remark} \rm 
As in the above proof, 
for any positive compact operator $a \in K(H)^+$, 
the value $\varphi_{\alpha}(a)$ can be approximated by the value of the canonical finite rank operators $b_n$ 
as follows: 
let $\lambda(a) = (\lambda_1(a),\lambda_2(a),\cdots,$ $\lambda_n(a), \cdots)$ be the 
eigenvalue sequence of $a$.  
We have  
$a = \sum_{n=1}^{\infty} \lambda_n(a)p_n$, where each $p_n$ is a one dimensional spectral projection of $a$.  
Put $b_n = \sum_{k=1}^{n} \lambda_k(a)p_k$.  Then the sequence $(b_n)_n$ of finite rank operators 
is increasing and converges 
to $a$ in norm.  Since  $\varphi_{\alpha}$ is  lower semi-continuous on $K(H)^+$ in norm and monotone map, 
$\varphi_{\alpha}(a) = \lim_{n \to \infty} \varphi_{\alpha}(b_n) $ . 
\end{remark} 


\section{Majorization and trace class operators for non-linear traces of Choquet type,}

In this section we discuss relations between  the majorization theory 
for eigenvalues and singular values of compact operators and trace class operators for non-linear traces of Choquet type. 
For  $a \in K(H)^+$, 
let $a = \sum_{i=1}^{\infty} \lambda_i(a) p_i$
be the spectral decomposition of $a$, where 
$\lambda(a) = (\lambda_1(a),\lambda_2(a),\dots, \lambda_n(a),\dots )$ is the list of the 
eigenvalues of $a$ in decreasing order with counting multiplicities. 
For fixed $i = 1,2,\dots $,  we denote by $\lambda_i$ a non-linear map 
$ \lambda_i: (K(H))^+ \rightarrow  {\mathbb C}^+$ given by 
$\lambda_i(a)$ for $a \in (K(H))^+$.

Let $x = (x_n)_n$ be an infinite sequence of non-negative numbers with $x_n \rightarrow 0$ as $n \rightarrow 
\infty.$  The decreasing rearrangement of $x$ is 
denoted by 
$x^{\downarrow} = (x^{\downarrow}_1, x^{\downarrow}_2, \dots, x^{\downarrow}_n, \dots )$. 
For such $x,y$ the weak majorization $x \prec _w  y$ is defined by 
$$
\sum_{i=1}^k x^{\downarrow}_i \leq \sum_{i=1}^k y^{\downarrow}_i \quad ( \text{ for }  k = 1,2,3,\dots).
$$
The majorization $x \prec y$ is defined by 
$$
\sum_{i=1}^k x^{\downarrow}_i \leq \sum_{i=1}^k y^{\downarrow}_i \quad  ( \text{ for }  k = 1,2,3,\dots )
\text{ and } 
\sum_{i=1}^{\infty}x^{\downarrow}_i = \sum_{i=1}^{\infty} y^{\downarrow}_i  .
$$
For $a,b \in (K(H))^+$, 
if $a \leq b$, then $\lambda_i(a) \leq \lambda_i(b)$ for any $i = 1, 2, \dots $. 
If $\lambda_i(a) \leq \lambda_i(b)$ for any $i = 1, 2, 3,\dots$, then $\lambda(a) \prec_w \lambda(b)$.  
If $\lambda(a) \prec  \lambda(b)$,  then $\lambda(a) \prec _w \lambda(b)$.  See, for example,  \cite{bhatia1}, 
\cite{H}, \cite{hiaipetz}  and \cite{simon} for majorization theory of matrices and compact operators.

Moreover  we see that $a \leq b$ if and only if $\varphi(a) \leq \varphi(b)$ for any positive linear 
functional.  
 We shall consider a similar fact for the condition that 
$\lambda_i(a) \leq \lambda_i(b)$ for any $i = 1, 2,3, \dots $. 
The following proposition is just a reformulation of a known fact:

\begin{proposition} 
For $a,b \in (K(H))^+$, the following conditions are equivalent:
\begin{enumerate}
\item[$(1)$]  $\varphi_{\alpha}(a) \leq \varphi_{\alpha}(b)$
for any non-linear trace $\varphi_{\alpha}$  of Choquet type associated with  
all monotone increasing function $\alpha: {\mathbb N}_0  \rightarrow [0, \infty)$  with $\alpha(0) = 0$.  
\item[$(2)$] $\lambda_i(a) \leq \lambda_i(b)$ for any $i = 1,2,3,\dots $. 
\item[$(3)$] there exists a contraction $c \in B(H)$ such that $a = cbc^*$.
\end{enumerate}
\end{proposition}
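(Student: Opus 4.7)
The plan is to use the formula $\varphi_{\alpha}(a)=\sum_{i=1}^{\infty}\lambda_i(a)c_i$ from Proposition 2.7 together with the special cases in Example 2.6 and a direct eigenbasis construction, proving the implications in a cycle $(2)\Rightarrow(1)\Rightarrow(2)\Rightarrow(3)\Rightarrow(2)$ (so that each direction is handled separately).

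For $(2)\Rightarrow(1)$, I would apply Proposition 2.7 to rewrite $\varphi_{\alpha}(a)=\sum_i\lambda_i(a)c_i$ with $c_i\geq 0$, and compare term by term against $\varphi_{\alpha}(b)=\sum_i\lambda_i(b)c_i$. For $(1)\Rightarrow(2)$, I would fix an index $i$ and test hypothesis (1) against the very specific weight $\alpha=(0,\dots,0,1,1,\dots)$ with exactly $i$ initial zeros; by Example 2.6(2)--(3), the associated trace is $\varphi_{\alpha}=\lambda_i$, so (1) immediately gives $\lambda_i(a)\leq\lambda_i(b)$. For $(3)\Rightarrow(2)$, I would use that the nonzero eigenvalues of $cbc^*=(cb^{1/2})(cb^{1/2})^*$ coincide with those of $b^{1/2}c^*cb^{1/2}$, and from $c^*c\leq\|c\|^2 I\leq I$ conclude $b^{1/2}c^*cb^{1/2}\leq b$; the mini-max principle (already invoked in the proof of Theorem 2.8 for monotonicity of $\varphi_{\alpha}$) then yields $\lambda_i(a)\leq\lambda_i(b)$ for every $i$.

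The main obstacle, and the only step that calls for an actual construction, is $(2)\Rightarrow(3)$. The idea is to transport one spectral basis to the other. I would fix orthonormal bases of eigenvectors $\{e_i\}$ for $b$ and $\{f_i\}$ for $a$, extended to ONBs of $H$ using the zero eigenspaces, so that $be_i=\lambda_i(b)e_i$ and $af_i=\lambda_i(a)f_i$ with the standard decreasing ordering from Definition 2.5. Define $c\in B(H)$ by
\[
ce_i=t_if_i,\qquad t_i=\begin{cases}\sqrt{\lambda_i(a)/\lambda_i(b)} & \text{if }\lambda_i(b)>0,\\ 0 & \text{if }\lambda_i(b)=0.\end{cases}
\]
Hypothesis (2) gives $t_i\in[0,1]$, so $c$ is a contraction (it sends an ONB to mutually orthogonal vectors of norm at most $1$). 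A direct calculation yields $c^*f_j=t_je_j$, hence $cbc^*f_j=t_j^2\lambda_j(b)f_j$; this equals $\lambda_j(a)f_j=af_j$ both when $\lambda_j(b)>0$ and when $\lambda_j(b)=0$ (the latter forces $\lambda_j(a)=0$ by (2)), so $cbc^*=a$ on a basis and hence everywhere.

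The only bookkeeping to watch is that the ordering in Definition 2.5 is by decreasing eigenvalue (with multiplicities) rather than by eigenspace, so even when $a$ and $b$ have eigenvalues of different multiplicities, the pairing $e_i\leftrightarrow f_i$ is well-defined and the preceding formulas go through uniformly. With that in place, the four implications close the cycle and the proposition is established.
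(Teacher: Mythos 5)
Your proposal is correct and follows essentially the same route as the paper: (1)$\Leftrightarrow$(2) via the test weights $\alpha=(0,\dots,0,1,1,\dots)$ and the expansion $\varphi_{\alpha}=\sum_i c_i\lambda_i$, (2)$\Rightarrow$(3) by transporting the scalars $\sqrt{\lambda_i(a)/\lambda_i(b)}$ between the eigenbases of $b$ and $a$ (the paper writes the same construction as $c=u^*wdv$ with a diagonal contraction $d$ and a partial isometry $w$), and (3)$\Rightarrow$(2) from standard eigenvalue estimates for $cbc^*$. The only bookkeeping to add is to set $c=0$ on the orthogonal complement of $\operatorname{span}\{e_i\}$ and to note that $cbc^*=a$ also holds on kernel vectors of $a$ not among the $f_j$, which is immediate since $c^*$ vanishes there.
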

\begin{proof}
(1)$\Rightarrow$(2)  It is clear, because there exists 
$\alpha = (0,\dots,0,1,1,\dots,)$ with $\varphi_{\alpha}(a) = \lambda_i(a) $. 

(2)$\Rightarrow$(1)  Since any  non-linear trace $\varphi_{\alpha}$  of Choquet type 
is positively spanned by $\lambda_i$ for $i = 1,2,3,\dots $, 
(2) implies that $\varphi_{\alpha}(a) \leq \varphi_{\alpha}(b)$. 

(2)$\Rightarrow$(3)  Assume that $\lambda_i(a) \leq \lambda_i(b)$ for any $i = 1, 2,3, \dots $.  
Then there exist constants $d_i$ with $0 \leq d_i \leq 1$ such that $\lambda_i(a) = d_i\lambda_i(b)d_i$. 
 By diagonalization, there exist 
unitaries $u$ and $v$ in  $B(H)$ such that 
$$
uau^* = {\rm diag}(\lambda_1(a),\lambda_2(a),\lambda_3(a),\dots) \oplus O
$$
and
$$
 vbv^* = {\rm diag}(\lambda_1(b),\lambda_2(b),\lambda_3(b),\dots) \oplus O'. 
$$
Let $d = {\rm diag}(d_1,d_2,d_3,\dots) \oplus O'$ be a daigonal operator in $B(H)$. 
There exists a partial isometry $w$ such that 
$uau^* = wdvbv^*d^*w^*$. Then $ c:= u^*wdv$ is a contraction and $a = cbc^*$. 

(3)$\Rightarrow$(2)  Assume that there exists a contraction $c \in B(H)$ such that $a = cbc^*$. 
Then,  for any $i = 1,2, 3,\dots $,
$$
\lambda_i(a) \leq \| c \|  \lambda_i(b) \| c^* \| \leq \lambda_i(b). 
$$
\end{proof}

If ${\rm Tr}$ is the usual linear trace, then $||a||_1:= {\rm Tr}(|a|)$ is the trace norm for a trace class operator
$a$. We shall replace 
the usual linear trace by non-linear traces of Choquet type.  

\begin{definition} \rm
 Let $\varphi = \varphi_{\alpha}$ be  a non-linear trace of Choquet type associated with  
a monotone increasing function $\alpha: {\mathbb N}_0  \rightarrow [0, \infty)$  with $\alpha(0) = 0$. 
Define $|||a|||_{\alpha}:= \varphi_{\alpha}(|a|)$ for $a \in K(H)$ admitting $+\infty$. Since $\varphi_{\alpha}$ 
is unitarily invariant, $|||uav|||_{\alpha} = |||a|||_{\alpha}$ for any unitaries $u$ and $v$ in $B(H)$. 
\end{definition}

\begin{example} \rm
If $\alpha = (0,1,2,\dots,k,k,k, \dots)$, then $\varphi_{\alpha}(a) =  \sum_{i=1} ^{k}  \lambda_i(a)$. 
Therefore  
$$
|||a|||_{\alpha} = \sum_{i=1} ^{k}  \lambda_i(|a|) = \sum_{i=1} ^{k}  s_i(a)
$$
gives a Ky Fan norm  $\|a\|_{(k)} $, where  $s_i(a) :=  \lambda_i(|a|)$ is the $i$-th singular value of 
$a \in K(H)$. 
\end{example}

\begin{proposition} 
Let $\varphi = \varphi_{\alpha}$ be  a non-linear trace of Choquet type associated with  
a monotone increasing function $\alpha: {\mathbb N}_0  \rightarrow [0, \infty)$  with $\alpha(0) = 0$ and 
$\alpha(1) > 0$.  Put $c_i := \alpha(i) - \alpha(i - 1)$ for $i =1,2,3, \ldots$. Recall that 
$ \varphi_{\alpha} = \sum _{i=1}^{\infty}  c_i \lambda_i$. 
Define $|||a|||_{\alpha}:= \varphi_{\alpha}(|a|)$ for $a \in K(H)$. 
Then the following conditions are equivalent: 
\begin{enumerate}
\item[$(1)$] $\alpha$ is concave in the sense that 
$\frac{\alpha(i +1) + \alpha(i - 1)}{2} \leq \alpha(i), \;  (i =1,2,3, $ $\ldots)$.
\item[$(2)$] $(c_i)_i$ is a decreasing sequence: $c_1 \geq c_2 \geq \cdots \geq c_n \geq \cdots $.
\item[$(3)$] $||| \ |||_{\alpha}$ satisfies the triangle inequality: for any $a,b \in K(H) $, 
$|||a + b|||_{\alpha} \leq |||a |||_{\alpha} + ||| b|||_{\alpha}$ admitting $+\infty$. 
\end{enumerate}
\end{proposition}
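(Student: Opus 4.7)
The plan is to prove the cycle of implications with the equivalence $(1) \Leftrightarrow (2)$ being immediate: concavity $2\alpha(i) \geq \alpha(i+1) + \alpha(i-1)$ rearranges exactly to $c_{i+1} \leq c_i$. The substantive work is relating the monotonicity of $(c_i)$ to the triangle inequality for $|||\cdot|||_\alpha$.

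For $(2) \Rightarrow (3)$: I would rewrite the formula $\varphi_\alpha(|a|) = \sum_{i\ge 1} c_i \, s_i(a)$ as a non-negative combination of classical Ky Fan $k$-norms. Setting $d_k = c_k - c_{k+1} \geq 0$ and $c_\infty = \lim_{k\to\infty} c_k \geq 0$ (which exists since $(c_i)$ is decreasing and non-negative), monotonicity gives $c_i = c_\infty + \sum_{k \geq i} d_k$, and an interchange of the two non-negative summations (justified by Tonelli) yields
\[
|||a|||_\alpha \;=\; c_\infty \, \|a\|_1 \;+\; \sum_{k=1}^{\infty} d_k \, \|a\|_{(k)},
\]
where $\|a\|_{(k)} = \sum_{i=1}^k s_i(a)$ is the Ky Fan $k$-norm. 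Since each Ky Fan norm and the trace norm satisfy the triangle inequality on $K(H)$ (admitting $+\infty$), so does any non-negative combination of them.

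For $(3) \Rightarrow (2)$: I argue the contrapositive. Suppose $c_{j+1} > c_j$ for some $j \geq 1$. Choose an orthonormal system $e_1, \ldots, e_{j+1}$ in $H$ and define the two rank-$j$ projections
\[
a \;=\; \sum_{i=1}^{j} e_i e_i^*, \qquad b \;=\; \sum_{i=2}^{j+1} e_i e_i^*,
\]
so that $\varphi_\alpha(|a|) = \varphi_\alpha(|b|) = c_1 + \cdots + c_j$. The sum $a + b = e_1 e_1^* + 2 \sum_{i=2}^{j} e_i e_i^* + e_{j+1} e_{j+1}^*$ has eigenvalue list $(2, \ldots, 2, 1, 1, 0, \ldots)$ with $j-1$ twos followed by two ones (just $(1,1,0,\ldots)$ when $j=1$), hence
\[
\varphi_\alpha(|a+b|) \;=\; 2(c_1 + \cdots + c_{j-1}) + c_j + c_{j+1}.
\]
A direct subtraction gives $\varphi_\alpha(|a+b|) - \varphi_\alpha(|a|) - \varphi_\alpha(|b|) = c_{j+1} - c_j > 0$, violating the triangle inequality.

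The main obstacle is locating the right counterexample in $(3) \Rightarrow (2)$: the naive attempt of splitting a single rank-$(j+1)$ projection as $a + b$ always produces only the weaker inequality $c_{j+1} \leq c_1$ rather than $c_{j+1} \leq c_j$. The trick of using two overlapping rank-$j$ projections is what creates a doubling in the first $j-1$ eigenvalues of $|a+b|$ and thereby isolates $c_j$ (rather than $c_1$) in the comparison. Everything else — the Abel-summation rearrangement in the forward direction and the triangle inequality for the Ky Fan and trace norms on $K(H)$ — is classical.
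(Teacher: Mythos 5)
Your proof is correct and follows essentially the same route as the paper: the forward direction rewrites $|||\cdot|||_{\alpha}$ as a non-negative combination of Ky Fan norms, and the converse uses the sum of two overlapping rank-$j$ projections (the paper's $a = p_1+p_2+2p_3+\cdots+2p_{i+1}$, split as $(p_1+p_3+\cdots+p_{i+1})+(p_2+p_3+\cdots+p_{i+1})$, is exactly your $a+b$ up to relabeling, with the same $i=1$ case). Your explicit handling of $c_{\infty}=\lim_k c_k$ via the extra term $c_{\infty}\,\|a\|_1$ is in fact slightly more careful than the paper's displayed identity $\sum_i c_i\lambda_i(|a|)=\sum_i(c_i-c_{i+1})\|a\|_{(i)}$, which omits that term when $c_{\infty}>0$, though the triangle inequality follows either way.
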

\begin{proof}
It is trivial that (1) $\Leftrightarrow$ (2). 

(2) $\Rightarrow$ (3): Suppose that $(c_i)_i$ is a decreasing sequence. 
The Ky Fan $k$-norms is defined as 
$$
|| a ||_{(k)} := s_1(a) + s_1(a) + \dots + s_k(a) = \lambda_1(|a|) + \lambda_2(|a|)+ \dots + \lambda_k(|a|))
$$ 
for $a \in K(H)$. 
Then 
$$
|||a|||_{\alpha}= \varphi_{\alpha}(|a|) = \sum_{i =1}^{\infty}  c_i \lambda_i(|a|) = 
\sum_{i =1}^{\infty}( c_i  - c_{i+1})|| a ||_{(i)} .
$$
Therefore $||| \ |||_{\alpha}$ satisfies the triangle inequality.

(3) $\Rightarrow$ (2):Let $I = \sum _{i=1}^{\infty} p_i $ be a resolution of the identity by minimal projections. 
For  $i = 2,3,\dots, $, let  
$$
a = p_1 + p_2 + 2p_3 + 2p_4 + \dots + 2p_{i+1} .
$$
Then 
\begin{align*}
\alpha(i+1) + & \alpha(i-1)  =  \varphi_{\alpha}(a)
 =||| p_1 + p_2 + 2p_3 + 2p_4 + \cdots + 2p_{i+1}|||_{\alpha} \\
 & \leq  
|||p_1+ p_3 + \cdots + p_{i+1}|||_{\alpha} + |||p_2+ p_3 + \cdots + p_{i+1}|||_{\alpha} \\
& = \alpha(i) + \alpha(i) = 2\alpha(i).
\end{align*}
For $i = 1$, we have that 
$$
\alpha(2) + \alpha(0) = \alpha(2) = ||| p_1 + p_2|||_{\alpha} \le |||p_1|||_{\alpha} +|||p_2|||_{\alpha} = 2\alpha(1).
$$
\end{proof}

\begin{remark} If  $\alpha$ is concave, then for any $m, n \in {\mathbb N}_0$, 
$$
\alpha(m + n) \leq \alpha(m) + \alpha(n).
$$
In fact, 
$$
\alpha(m + n) - \alpha(m) = \sum_{i=m +1}^{m+n} c_i \leq \sum_{i=1}^{n} c_i = \alpha(n).
$$
\end{remark}

\begin{definition} \rm
 Let $\varphi = \varphi_{\alpha}$ be  a non-linear trace of Choquet type associated with  
a monotone increasing function $\alpha: {\mathbb N}_0  \rightarrow [0, \infty)$  with $\alpha(0) = 0$ and 
$\alpha(1) > 0$.  
Define $|||a|||_{\alpha}:= \varphi_{\alpha}(|a|)$ for $a \in K(H)$ admitting $+\infty$.  Then 
$a$ is said to be a {\it trace class operator for a non-linear trace}  $\varphi_{\alpha}$  {\it of Choquet type} if 
$|||a|||_{\alpha}= \varphi_{\alpha}(|a|) < \infty$.  The weighted trace class ${\mathcal C}^{\alpha}_1(H)$ is defined as 
 the set of all trace class operators for a non-linear trace $\varphi_{\alpha}$.   
 \end{definition}

\begin{theorem} 
Let $\varphi = \varphi_{\alpha}$ be  a non-linear trace of Choquet type associated with  
a monotone increasing function $\alpha: {\mathbb N}_0  \rightarrow [0, \infty)$  with $\alpha(0) = 0$ and 
$\alpha(1) > 0$. Assume that  $\alpha$ is concave.
Then the weighted trace class ${\mathcal C}^{\alpha}_1(H)$ of
trace class operators for a non-linear trace $\varphi_{\alpha}$ 
is a Banach space with respect to the norm $||| \ |||_{\alpha}$.  Moreover  ${\mathcal C}^{\alpha}_1(H)$ 
is a $*$-ideal of $B(H)$.   
\end{theorem}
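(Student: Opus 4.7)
The plan is to verify in turn that $|||\cdot|||_{\alpha}$ is a genuine norm, that $\mathcal{C}^{\alpha}_1(H)$ is stable under taking adjoints and under left/right multiplication by bounded operators, and finally that it is complete. Throughout I write $s_i(a) = \lambda_i(|a|)$ and recall that $\varphi_{\alpha}(|a|) = \sum_{i \geq 1} c_i s_i(a)$ with $c_i = \alpha(i) - \alpha(i-1) \geq 0$ and $c_1 = \alpha(1) > 0$. The triangle inequality for $|||\cdot|||_{\alpha}$ has just been established under the concavity hypothesis, and positive homogeneity follows from $|ka| = |k||a|$ together with positive homogeneity of $\varphi_{\alpha}$. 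The key observation for positive definiteness and for later use is the lower bound
\begin{equation*}
|||a|||_{\alpha} \geq c_1 s_1(a) = \alpha(1) \|a\|,
\end{equation*}
which uses $c_1 = \alpha(1) > 0$; it forces $a = 0$ whenever $|||a|||_{\alpha} = 0$, and it gives a continuous embedding $\mathcal{C}^{\alpha}_1(H) \hookrightarrow K(H)$ with respect to the operator norm.

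For the $*$-ideal property I rely on the classical Weyl-type inequalities for singular values: for $a \in K(H)$ and $b \in B(H)$,
\begin{equation*}
s_i(ab) \leq \|b\| s_i(a), \qquad s_i(ba) \leq \|b\| s_i(a), \qquad s_i(a^*) = s_i(a).
\end{equation*}
Multiplying by $c_i \geq 0$ and summing gives $|||ab|||_{\alpha}, |||ba|||_{\alpha} \leq \|b\| \cdot |||a|||_{\alpha}$ and $|||a^*|||_{\alpha} = |||a|||_{\alpha}$, so $\mathcal{C}^{\alpha}_1(H)$ is a self-adjoint two-sided ideal of $B(H)$.

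For completeness, let $(a_n)$ be Cauchy in $|||\cdot|||_{\alpha}$. The embedding above shows $(a_n)$ is Cauchy in operator norm, so $a_n \to a$ in operator norm for some $a \in K(H)$. Continuity of each singular value in operator norm, i.e.\ $|s_i(x) - s_i(y)| \leq \|x - y\|$, gives $s_i(a_n - a_m) \to s_i(a_n - a)$ as $m \to \infty$ for every fixed $n$ and $i$, and Fatou's lemma for non-negative series on $\mathbb{N}$ with counting measure then yields
\begin{equation*}
|||a_n - a|||_{\alpha} = \sum_{i} c_i s_i(a_n - a) \leq \liminf_{m \to \infty} \sum_{i} c_i s_i(a_n - a_m) = \liminf_{m \to \infty} |||a_n - a_m|||_{\alpha}.
\end{equation*}
Given $\varepsilon > 0$, choose $N$ so that $|||a_n - a_m|||_{\alpha} < \varepsilon$ for $n, m \geq N$; then $|||a_n - a|||_{\alpha} \leq \varepsilon$ for $n \geq N$. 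This simultaneously shows $a - a_n \in \mathcal{C}^{\alpha}_1(H)$ (so $a = a_N + (a - a_N) \in \mathcal{C}^{\alpha}_1(H)$) and $a_n \to a$ in $|||\cdot|||_{\alpha}$.

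The main obstacle, modest but genuine, is the Fatou interchange in the completeness step; it depends on continuity of $a \mapsto s_i(a)$ on $K(H)$ in operator norm, which is a classical consequence of the min-max characterization of singular values and is the only input that is not a direct unpacking of the definitions. Alternatively one can replace the Fatou step by invoking the lower semi-continuity of $\varphi_{\alpha}$ in operator norm (already noted when $\varphi_{\alpha}$ was introduced) together with the fact that $|a_n - a_m| \to |a_n - a|$ in operator norm as $m \to \infty$.
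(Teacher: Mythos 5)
Your proposal is correct and follows essentially the same route as the paper: the bound $\|a\|\le \frac{1}{c_1}|||a|||_\alpha$ to get an operator-norm limit $a\in K(H)$, passing to the limit in the Cauchy estimate via operator-norm continuity of the singular values (you make the interchange explicit with Fatou's lemma, which the paper leaves implicit), and the same Weyl-type inequalities $s_i(ab)\le \|b\|s_i(a)$, $s_i(ba)\le\|b\|s_i(a)$, $s_i(a^*)=s_i(a)$ for the $*$-ideal property.
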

\begin{proof}
Since  $\alpha$ is concave, $|||a|||_{\alpha}= \varphi_{\alpha}(|a|)$ define a norm on  ${\mathcal C}^{\alpha}_1(H)$. 
Let $(a_n)_n$ be a Cauchy sequence in ${\mathcal C}^{\alpha}_1(H)$. Since the operator norm 
$\|a\| = \lambda_1(|a|) \leq \frac{1}{c_1} ||| a |||_{\alpha}$,  $(a_n)_n$ is also a Cauchy sequence in the 
operator norm in $B(H)$. Therefore there exists a bounded operator $a \in B(H)$ such that 
$\|a_n - a \| \rightarrow 0$.  Since each $a_n$ is a compact operator, $a$ is also a compact operator.  
We shall show that $(a_n)_n$ converges to $a$ in the norm  $||| \ |||_{\alpha}$. Since 
$(a_n)_n$ be a Cauchy sequence in ${\mathcal C}^{\alpha}_1(H)$, for any $\epsilon > 0$ there exists a natural 
number $N$ such that for any natural number $n$ and $m$, if $n \geq N$ and $m \geq N$, then 
$$
\sum_{k=1}^{\infty} \lambda_k(|a_n -a_m|)c_k < \epsilon .
$$
Let $m \rightarrow \infty$.  Since $\|a_m - a \| \rightarrow 0$ in operator norm, $|a_n - a_m|$ coverges 
to $|a_n - a|$ in operator norm. Since each $ \lambda_k$ is operator norm continuous, 
$$
\sum_{k=1}^{\infty} \lambda_k(|a_n -a|)c_k \leq \epsilon .
$$
Since 
$$
|||a|||_{\alpha} \leq |||a - a_n |||_{\alpha} + |||a_n|||_{\alpha} 
\leq  \epsilon +   |||a_n|||_{\alpha}  <  \infty, 
$$
we have $a \in {\mathcal C}^{\alpha}_1(H)$. 
Moreover the above also shows that $(a_n)_n$ converges to $a$ in the norm  $||| \ |||_{\alpha}$. Thus 
the weighted trace class ${\mathcal C}^{\alpha}_1(H)$ is a Banach space.

For any $a \in K(H)$ and $b \in B(H)$,  we have $\lambda_k(|ab|) \leq \lambda_k(|a|) ||b||$, 
$\lambda_k(|ba|) \leq ||b|| \lambda_k(|a|)$ and $\lambda_k(|a^*|) =\lambda_k(|a|)$.  
Therefore $|||ab|||_{\alpha} \leq |||a|||_{\alpha}||b||$, $|||ba|||_{\alpha} \leq ||b|| |||a|||_{\alpha}$ 
and $|||a^*|||_{\alpha} = |||a|||_{\alpha}$.  Hence ${\mathcal C}^{\alpha}_1(H)$ 
is a $*$-ideal of $B(H)$.   
\end{proof}

\begin{definition} \rm
 Let $\varphi_{\alpha}$ be  a non-linear trace of Choquet type associated with  
a monotone increasing function $\alpha: {\mathbb N}_0  \rightarrow [0, \infty)$  with $\alpha(0) = 0$ and 
$\alpha(1) > 0$.  Assume that  $\alpha$ is concave. We shall extend $\varphi_{\alpha}$ to 
the weighted trace class ${\mathcal C}^{\alpha}_1(H)$ as follows: Let $a \in {\mathcal C}^{\alpha}_1(H)$ be
a  trace class operator for the non-linear trace $\varphi_{\alpha}$. Consider the decomposition 
$$
a = \frac{1}{2}(a + a^*) + i  \ \frac{1}{2i}(a - a^*) = a_1 -a_2 + i(a_3 -a_4)
$$
with $a_1, a_2, a_3, a_4 \in (K(H))^+$ and $a_1a_2 = a_3a_4 = 0$.  
Since $\varphi_{\alpha}(|a|) < \infty$ and $\alpha$ is concave, we have
$\varphi_{\alpha}(|a + a^*|) < \infty$ and $\varphi_{\alpha}(|a - a^*|) < \infty$. 
Because 
$$
a_1 \leq a_1 + a_2 = \frac{1}{2}|a + a^*|, a_2 \leq a_1 + a_2 = \frac{1}{2}|a + a^*|, 
$$
$$
a_3 \leq a_3 + a_4 = \frac{1}{2}|a - a^*|, a_4 \leq a_3 + a_4 = \frac{1}{2}|a - a^*|, 
$$
the monotonity of $\varphi_{\alpha}$ implies that 
$$
0 \leq \varphi_{\alpha}(a_i) < \infty, \ \ \ (i = 1,2,3,4). 
$$
Therefore we can define a non-linear trace 
$\varphi_{\alpha} : {\mathcal C}^{\alpha}_1(H) \rightarrow  {\mathbb C}$ by 
$$
\varphi_{\alpha}(a) := \varphi_{\alpha}(a_1) - \varphi_{\alpha}(a_2) + i( \varphi_{\alpha}(a_3) - \varphi_{\alpha}(a_4)),
$$
where we use the same symbol $\varphi_{\alpha}$ for the extended non-linear trace. 
\end{definition}

\section{Schatten-von Neumann class for non-linear traces of Choquet type}

Fix a positive number $p \geq 1$.  In this section we shall study general Schatten-von Neumann class ${\mathcal C}^{\alpha}_p(H)$ for non-linear traces of Choquet type on the algebra of compact operators.   First we need to 
consider the case of matrix algebras. 


\begin{definition} \rm  (see \cite {nagisawatatani2})
Let $\alpha: \{0,1,2, \dots, n\} \rightarrow [0, \infty)$ be a 
monotone increasing function with $\alpha(0) = 0$. We denote by 
$\mu_{\alpha}$ the associated permutation invariant 
monotone measure on $\Omega  = \{1,2, \dots, n\}$. 
Let 
$\lambda(a) = (\lambda_1(a),\lambda_2(a),\dots, \lambda_n(a))$ be the list of the 
eigenvalues of $a \in (M_n({\mathbb C}))^+ $ in decreasing order :
$\lambda_1(a) \geq \lambda_2(a) \geq \dots \geq \lambda_n(a)$ with 
counting multiplicities. 
Define 
$\varphi_{\alpha} : (M_n({\mathbb C}))^+ \rightarrow  {\mathbb C}^+$ 
as follows:
\begin{align*}
\varphi_{\alpha}(a) &= \sum_{i=1} ^{n-1} ( \lambda_i(a)- \lambda_{i+1}(a))\mu_{\alpha}(A_i) 
+ \lambda_n (a) \mu_{\alpha}(A_n)\\
&= \sum_{i=1} ^{n-1} ( \lambda_i(a)- \lambda_{i+1}(a)) \alpha( ^{\#}A_i) 
+ \lambda_n (a) \alpha(^{\#}A_n)\\
& =  \sum_{i=1} ^{n-1} ( \lambda_i(a)- \lambda_{i+1}(a)) \alpha(i) 
+ \lambda_n (a) \alpha(n) ,
\end{align*}
where $A_i = \{1,2,\dots,i \}$.
We call $\varphi_{\alpha}$ the non-linear trace of Choquet type on the matrix algebra $(M_n({\mathbb C}))^+$ 
associated with $\alpha$.  We note that 
\begin{align*}
\varphi_{\alpha}(a) &= \sum_{i=1} ^{n-1} ( \lambda_i(a)- \lambda_{i+1}(a)) \alpha(i) 
+ \lambda_n (a) \alpha(n) \\
& = \sum_{i=2} ^{n}  \lambda_i(a)( \alpha(i) -  \alpha(i-1)) +  \lambda_1(a)\alpha(1) \\
& = \sum_{i=2} ^{n}  c_i \lambda_i(a)  + c_1 \lambda_1(a), 
\end{align*}
where  $\alpha$ and $c_i$ ($i=1,\dots,n)$  are related by 
$c_1 = \alpha(1) = \alpha(1) - \alpha(0) $, $c_i = \alpha(i) - \alpha(i-1)$  $(i=2,3,\ldots,n)$  or  $\alpha(j)= \sum_{i=1}^j c_i$  $(j=1,2,\ldots,n)$. 
Note that $\varphi_{\alpha}$ is norm continuous on $(M_n({\mathbb C}))^+$, since 
each $\lambda_i$ is norm continuous on $(M_n({\mathbb C}))^+$. 
Define $|||a|||_{\alpha,p}:= \varphi_{\alpha}(|a|^p)^{1/p}$ for $a \in M_n({\mathbb C})$. 
\end{definition}

We shall consider when $|||a|||_{\alpha,p}$ satisfies the triangle inequality. We prepare a lemma. 

\begin{lemma}\label{lemma:norm-norm}
Let $V$ be a vector space over ${\mathbb C}$ and $\nu_1,\nu_2,\dots,\nu_n$ be norms on $V$.  Let $\beta$ be a 
norm on ${\mathbb R}^n$.  Assume that for any $x =(x_i)_i, y =(y_i)_i \in {\mathbb R}^n$, $0 \leq x \leq y$ 
implies $\beta(x) \leq \beta(y)$, where $0 \leq x \leq y$ means that $0 \leq x_i \leq y_i$ for $i = 1,\dots,n$. 
Define $\gamma: V \rightarrow {\mathbb R}$ by $\gamma(a) = \beta((\nu_1(a), \dots, \nu_n(a)))$ for $a \in V$.  
Then $\gamma$ is a norm on $V$. 
\end{lemma}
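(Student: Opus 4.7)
The plan is to verify the three axioms of a norm for $\gamma$, namely positive definiteness, absolute homogeneity, and the triangle inequality. The first two are essentially automatic from the fact that each $\nu_i$ is a norm and $\beta$ is a norm on $\mathbb{R}^n$; the interesting point is the triangle inequality, where the monotonicity hypothesis on $\beta$ enters in an essential way.

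For positive definiteness, I would note that $\nu_i(a) \geq 0$ for all $i$, so the vector $(\nu_1(a), \ldots, \nu_n(a))$ lies in $\mathbb{R}^n$ and $\gamma(a) = \beta((\nu_1(a), \ldots, \nu_n(a))) \geq 0$. If $\gamma(a) = 0$, then since $\beta$ is a norm the vector must vanish, i.e.\ $\nu_i(a) = 0$ for every $i$, and since each $\nu_i$ is a norm this forces $a = 0$. For absolute homogeneity, $\nu_i(\lambda a) = |\lambda| \nu_i(a)$ gives $(\nu_1(\lambda a), \ldots, \nu_n(\lambda a)) = |\lambda|(\nu_1(a), \ldots, \nu_n(a))$, and homogeneity of $\beta$ then yields $\gamma(\lambda a) = |\lambda|\gamma(a)$.

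For the triangle inequality, I first use that each $\nu_i$ satisfies $\nu_i(a+b) \leq \nu_i(a) + \nu_i(b)$, so
\begin{equation*}
0 \leq (\nu_1(a+b), \ldots, \nu_n(a+b)) \leq (\nu_1(a)+\nu_1(b), \ldots, \nu_n(a)+\nu_n(b))
\end{equation*}
in the coordinatewise order on $\mathbb{R}^n$. Applying the monotonicity hypothesis on $\beta$, we conclude
\begin{equation*}
\gamma(a+b) \leq \beta\bigl((\nu_1(a)+\nu_1(b), \ldots, \nu_n(a)+\nu_n(b))\bigr).
\end{equation*}
Now the right-hand side is $\beta(x+y)$, where $x = (\nu_1(a), \ldots, \nu_n(a))$ and $y = (\nu_1(b), \ldots, \nu_n(b))$. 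The triangle inequality for the norm $\beta$ on $\mathbb{R}^n$ therefore gives $\beta(x+y) \leq \beta(x) + \beta(y) = \gamma(a) + \gamma(b)$, which is the required inequality.

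The main step — and the only one that is not purely formal — is the triangle inequality, and even there the only obstacle is recognising that the componentwise subadditivity of the $\nu_i$'s cannot be passed through $\beta$ directly without the monotonicity assumption: monotonicity is exactly the bridge that lets one replace the vector $(\nu_i(a+b))_i$ by the (possibly larger) vector $(\nu_i(a)+\nu_i(b))_i$ inside $\beta$, at which point ordinary subadditivity of $\beta$ finishes the argument.
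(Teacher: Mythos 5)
Your proof is correct and follows essentially the same route as the paper: coordinatewise subadditivity of the $\nu_i$'s, then the monotonicity hypothesis on $\beta$, then the triangle inequality for $\beta$. The paper leaves positive definiteness and homogeneity as "clear," while you spell them out, but the substance is identical.
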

\begin{proof}
Since each $\nu_i$ is a norm, for any $a,b \in V$, we have that 
$0 \leq (\nu_1(a+b), \dots, \nu_n(a+b)) \leq (\nu_1(a) + \nu_1(b), \dots, \nu_n(a) + \nu_n(b ) ).$
Hence 
\begin{align*}
    \gamma (a+b) &=\beta((\nu_1(a+b), \dots, \nu_n(a+b))) \\
         & \leq \beta((\nu_1(a) + \nu_1(b), \dots, \nu_n(a) + \nu_n(b))).
\end{align*} 
Since $\beta$ is also a norm, 
$$
\gamma (a+b) \leq \beta( (\nu_1(a), \dots, \nu_n(a))) +\beta((\nu_1(b), \dots, \nu_n(b))) 
= \gamma(a) + \gamma(b).
$$
Thus $\gamma$ satisfies the triangle inequality. The other poroperties are clear. 
\end{proof}

\begin{theorem}\label{theorem:matrix-p-norm} 
Let $\varphi_{\alpha}$ be  a non-linear trace of Choquet type associated with  
a monotone increasing function $\alpha: \{0,1,2, \dots, n\} \rightarrow [0, \infty)$  
with $\alpha(0) = 0$ and 
$\alpha(1) > 0$.  Put $c_i := \alpha(i) - \alpha(i - 1)$ for $i =1,2,\dots,n$. Fix $p \geq 1$. 
Define $|||a|||_{\alpha,p}:= \varphi_{\alpha}(|a|^p)^{1/p}$ for $a \in M_n({\mathbb C})$. 
Then the following conditions are equivalent: 
\begin{enumerate}
\item[$(1)$] $\alpha$ is concave in the sense that 
$\frac{\alpha(i +2) + \alpha(i )}{2} \leq \alpha(i+1), \;  (i =0, 1,2,3, \dots,n-2)$.
\item[$(2)$] $(c_i)_i$ is a decreasing sequence: $c_1 \geq c_2 \geq \dots \geq c_n$.
\item[$(3)$] $||| \ |||_{\alpha,p}$ satisfies the triangle inequality. 
\end{enumerate}
In this case, $||| \ |||_{\alpha,p}$ is a unitarily invariant norm. 
\end{theorem}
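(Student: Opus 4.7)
The plan is to follow the pattern of the preceding proposition (the $p=1$ case) but the direction $(3)\Rightarrow(2)$ requires a new idea for $p>1$. The equivalence $(1)\Leftrightarrow(2)$ is immediate since $c_j=\alpha(j)-\alpha(j-1)$, so concavity of $\alpha$ on $\mathbb{N}_0$ means precisely $c_j\geq c_{j+1}$.

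For $(2)\Rightarrow(3)$, I would start from $|||a|||_{\alpha,p}^p=\sum_{i=1}^n c_i\,s_i(a)^p$ and apply Abel summation to obtain
\begin{equation*}
|||a|||_{\alpha,p}^p = c_n\|a\|_{(n),p}^p+\sum_{i=1}^{n-1}(c_i-c_{i+1})\|a\|_{(i),p}^p,
\end{equation*}
where $\|a\|_{(k),p}:=\bigl(\sum_{j=1}^k s_j(a)^p\bigr)^{1/p}$. Under (2) all the coefficients on the right are nonnegative. The key auxiliary fact is that each $\|\cdot\|_{(k),p}$ is a unitarily invariant norm on $M_n(\mathbb{C})$ for $p\geq 1$; on the symmetric-gauge side this amounts to the observation that $\Phi_k(x)=\max_{|S|=k}\bigl(\sum_{i\in S}|x_i|^p\bigr)^{1/p}$ is a norm on $\mathbb{R}^n$, which follows at once from Minkowski's inequality on each $k$-subset together with $\max(f+g)\leq\max f+\max g$. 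With this in hand, Lemma \ref{lemma:norm-norm} applied to $\nu_i(a)=\|a\|_{(i),p}$ and the monotone weighted $\ell^p$ (semi)norm $\beta(x)=\bigl(\sum d_i|x_i|^p\bigr)^{1/p}$ gives the triangle inequality for $|||\cdot|||_{\alpha,p}$; positive definiteness follows from $\alpha(1)>0$, since this forces some $d_i>0$ and hence $|||a|||_{\alpha,p}=0\Rightarrow s_1(a)=0$.

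For $(3)\Rightarrow(2)$, I would fix $i\in\{1,\ldots,n-1\}$ and use a perturbative test rather than a rigid one. A direct mimicking of the $p=1$ argument (with $a=p_1+p_2+2(p_3+\cdots+p_{i+1})$ decomposed into two rank-$i$ summands) yields only the loose bound $c_{i+1}\leq(2^p-1)c_i$ for $p>1$. Instead, for pairwise orthogonal rank-one projections $p_1,\ldots,p_{i+1}$ and $r\in(0,1)$, I set
\begin{equation*}
a_r=p_1+\cdots+p_i+r\,p_{i+1},\qquad b_r=r\,p_1+p_2+\cdots+p_{i+1}.
\end{equation*}
Unitary invariance gives $|||a_r|||_{\alpha,p}=|||b_r|||_{\alpha,p}$ with $|||a_r|||_{\alpha,p}^p=\alpha(i)+c_{i+1}r^p$, while $a_r+b_r$ has eigenvalues $(\underbrace{2,\ldots,2}_{i-1},1+r,1+r)$ so $|||a_r+b_r|||_{\alpha,p}^p=2^p\alpha(i-1)+(1+r)^p(c_i+c_{i+1})$. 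Substituting in $|||a_r+b_r|||_{\alpha,p}^p\leq 2^p|||a_r|||_{\alpha,p}^p$ and cancelling the $2^p\alpha(i-1)$ term reduces to
\begin{equation*}
c_i\bigl[2^p-(1+r)^p\bigr]\geq c_{i+1}\bigl[(1+r)^p-(2r)^p\bigr],
\end{equation*}
with both bracketed quantities strictly positive for $r\in(0,1)$ and both vanishing at $r=1$. A single L'Hôpital calculation shows their ratio tends to $1$ as $r\to 1^-$, so passing to the limit forces $c_{i+1}\leq c_i$.

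The main obstacle I anticipate is precisely this $(3)\Rightarrow(2)$ direction: any rigid test operator delivers only the loose constant $2^p-1$, and the sharp conclusion $c_{i+1}\leq c_i$ is recovered only by differentiating a continuous one-parameter family at the critical parameter $r=1$. Once the three conditions are equivalent, unitary invariance of $|||\cdot|||_{\alpha,p}$ is immediate: $(uav)^*(uav)=v^*|a|^2v$ shows that $|uav|$ and $|a|$ have the same eigenvalue sequence, so $\varphi_\alpha(|uav|^p)=\varphi_\alpha(|a|^p)$.
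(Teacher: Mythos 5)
Your proposal is correct, and the interesting part is that it proves the necessity direction by a genuinely different route than the paper. The equivalence $(1)\Leftrightarrow(2)$ and the implication $(2)\Rightarrow(3)$ are essentially the paper's argument: Abel summation turns $|||a|||_{\alpha,p}^p$ into a nonnegative combination of $p$-th powers of Ky Fan $p$-$k$ norms, and Lemma \ref{lemma:norm-norm} gives subadditivity (the paper cites Horn--Johnson for the Ky Fan $p$-$k$ norm where you sketch the symmetric-gauge argument; also, whether the weights $d_k$, some possibly zero, sit inside the $\nu_k$'s or inside $\beta$, one is using the lemma with seminorms, but its proof only needs subadditivity and monotonicity, so this is harmless in both versions). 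Your handling of definiteness via $c_1=\alpha(1)>0$ and of unitary invariance via $|uav|=v^*|a|v$ is fine.

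For $(3)\Rightarrow(2)$ the paper proceeds quite differently: it fixes $i$, assumes $\alpha(i+2)>\alpha(i+1)$, and uses the two-parameter family $a=2(p_1+\cdots+p_i)+(1+s)p_{i+1}+(1-t)p_{i+2}$ together with $c=2(p_1+\cdots+p_i)+p_{i+1}+p_{i+2}$; for each small $s$ it selects $t=t(s)$ by an intermediate-value argument so that $|||a|||_{\alpha,p}^p=(2^p-1)\alpha(i)+\alpha(i+2)$, proves $s\le t$ by applying the triangle inequality to $a$ and its swapped copy $b$, and finally expands $(1+s)^p=1+ps+\mathcal{O}(s^2)$ as $s\to 0$ to obtain $2\alpha(i+1)\ge\alpha(i)+\alpha(i+2)$. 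Your test family is leaner: $a_r$ and $b_r$ are unitarily equivalent by construction, so the equality of their norms is automatic and no auxiliary parameter has to be tuned; the spectra you claim are correct ($a_r$ has eigenvalues $1$ with multiplicity $i$ and $r$, while $a_r+b_r$ has $2$ with multiplicity $i-1$ and $1+r$ twice, using $r<1$), the cancellation of $2^p\alpha(i-1)$ gives exactly $c_i\bigl[2^p-(1+r)^p\bigr]\ge c_{i+1}\bigl[(1+r)^p-(2r)^p\bigr]$, both brackets are positive on $(0,1)$, and the limit $r\to 1^-$ is $1$ since both derivatives equal $-p\,2^{p-1}$ at $r=1$; dividing and passing to the limit gives $c_{i+1}\le c_i$ for $i=1,\dots,n-1$, which is all of condition $(2)$. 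So both arguments are first-order limiting arguments at a degenerate test configuration, but yours trades the paper's implicit choice of $t(s)$ and the lemma $s\le t$ for a single explicit inequality, at the cost of nothing; it reaches $(2)$ directly rather than $(1)$, which is equivalent. Your side remark that the rigid $p=1$ test only yields $c_{i+1}\le(2^p-1)c_i$ for $p>1$ is also accurate, and correctly identifies why a limiting family is needed.
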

\begin{proof} 
It is trivial that (1) $\Leftrightarrow$ (2). 

(2) $\Rightarrow$ (3): Suppose that $(c_i)_i$ is a decreasing sequence.  
Recall that Ky Fan $k$-norms is defined as 
$$
|| a ||_{(k)} := s_1(a) + s_1(a) + \dots + s_k(a) = \lambda_1(|a|) + \lambda_2(|a|)+ \dots + \lambda_k(|a|))
$$ 
for $a \in  M_n({\mathbb C})$ and $1 \leq k \leq n$. 
More generally, Ky Fan $p$-$k$ norm is defined as
$$
|| a ||_{p,(k)} := (\lambda_1(|a|^p) + \lambda_2(|a|^p)+ \dots + \lambda_k(|a|^p)))^{1/p},
$$ 
see, for example,  \cite[p.199]{Horn-Johnson}. 
Then 
\begin{align*}
|||a|||_{\alpha,p}& = \varphi_{\alpha}(|a|^p)^{1/p} = (\sum_{k =1}^{n}  c_k \lambda_k(|a|^p))^{1/p} \\
&= 
(\sum_{k =1}^{n-1}( c_k  - c_{k+1})|| a ||_{p,(k)}^p + c_n|| a ||_{p,(n)}^p)^{1/p} .
\end{align*}
Since $(c_i)_i$ is a decreasing sequence, $d_k := c_k  - c_{k+1} \geq 0$ for $k = 1,2, \ldots, n-1$ 
and $d_n := c_n \geq 0$. Define 
$\nu_k(a):= d_k^{1/p}|| a ||_{p,(k)}$.  
Consider $p$-norm $\beta$ on  ${\mathbb R}^n$ by $\beta(x) = (\sum_{k =1}^{n} |x_k|^p)^{1/p}$ for 
$x =(x_i)_i \in {\mathbb R}^n$. Then  for $a \in  M_n({\mathbb C})$, 
$$
|||a|||_{\alpha,p} = \beta((\nu_1(a), \dots, \nu_n(a))).
$$
By Lemma~\ref{lemma:norm-norm}, $|||a|||_{\alpha,p}$  satisfies the triangle inequality. 
Since $\varphi_{\alpha}$ 
is unitarily invariant, $|||uav|||_{\alpha,p} = |||a|||_{\alpha,p}$ for any unitaries $u$ and $v$ in 
$M_n({\mathbb C})$.  Now it is clear that $||| \ |||_{\alpha,p}$ is a unitarily invariant norm.

(3) $\Rightarrow$ (1):  Suppose that $||| \ |||_{\alpha,p}$ satisfies the triangle inequality.  
Let $ p_1 + p_2 + \dots + p_n = I$ be the resolution of the identity $I$ by minimal projections. 
Fix $i = 1,2,3, \dots,n-2. $ 

When  $\alpha(i+2) - \alpha(i+1) = 0 $, it is clear that $\frac{\alpha(i +2) + \alpha(i )}{2} \leq \alpha(i+1)$.
So we assume that $\alpha(i+2)>\alpha(i+1)$. 
Then we can choose $s_0 \in (0, 1]$ such that 
\[    \alpha(i+2)-\alpha(i) > (1+s_0)^p (\alpha(i+1)-\alpha(i)) .  \]
For any $s\in (0,s_0]$ and $t\in [0,1]$ we put
\begin{gather*}
  a = 2(p_1+\cdots+p_i)+(1+s)p_{i+1} +(1-t)p_{i+2} \in M_n(\C)  \\
  \text{ and } c = 2(p_1+\cdots + p_i)+ p_{i+1}+p_{i+2} \in M_n(\C).
\end{gather*}
Then we have
\begin{align*}
  |||a|||_{\alpha,p}^p=(2^p-(1+s)^p)\alpha(i) & + ((1+s)^p-(1-t)^p)\alpha(i+1) \\
                                                      & \qquad +(1-t)^p\alpha(i+2) 
\end{align*}
\[  \text{and  } |||c|||_{\alpha,p}^p = (2^p-1)\alpha(i) + \alpha(i+2).  \]
In the case of $t=0$,
\begin{align*}
   |||a|||_{\alpha,p}^p & =(2^p-(1+s)^p)\alpha(i)+((1+s)^p-1) \alpha(i+1) + \alpha(i+2) \\
     & \ge (2^p-1) \alpha(i) + \alpha(i+2) = |||c|||_{\alpha,p}^p.
\end{align*}
In the case of $t=1$,
\begin{align*}
   |||a|||_{\alpha,p}^p & =(2^p-(1+s)^p)\alpha(i)+(1+s)^p \alpha(i+1) \\
     & = 2^p \alpha(i) + (1+s)^p(\alpha(i+1)- \alpha(i))  \\
     & < 2^p \alpha(i) + \alpha(i+2)-\alpha(i) = |||c|||_{\alpha,p}^p.
\end{align*}
So, for any $s\in (0,s_0)$,  we can choose $t\in [0,1]$  such that
\[  |||a|||_{\alpha,p}^p = (2^p-1)\alpha(i) + \alpha(i+2).  \]
We shall show that $s \leq t$.  
Consider 
\[  b = 2(p_1 + \dots + p_i) + (1-t)p_{i+1} + (1+s)p_{i+2} \in M_n({\mathbb C}). \]
Then 
$|||a|||_{\alpha,p}^p =  |||b|||_{\alpha,p}^p = (2^p - 1)\alpha(i) + \alpha(i+2)$.
Since the norm $||| \cdot |||_{\alpha,p} $ satisfies the triangle inequality,  
\[   |||\frac{a + b}{2}|||_{\alpha,p} \le \frac{1}{2}(|||a|||_{\alpha,p} +|||b|||_{\alpha,p}) = |||a|||_{\alpha,p}.  \]   
Since
\[  \frac{a + b}{2} =  2(p_1 + \dots + p_i) + (1+\frac{s-t}{2}) (p_{i+1} + p_{i+2}) ,  \]  
$|||\frac{a + b}{2}|||_{\alpha,p}^p  \leq |||a|||_{\alpha,p}^p$ implies that 
$$
(2^p-(1+\frac{s-t}{2})^p)\alpha(i) + (1 + \frac{s-t}{2})^p\alpha(i+2) \leq (2^p - 1)\alpha(i) + \alpha(i+2).
$$
Thus we have that 
$$
((1 + \frac{s-t}{2})^p -1)(\alpha(i+2) -\alpha(i)) \leq 0.
$$
Then $\alpha(i+2) >\alpha(i)$ implies that $s \leq t$.  
By the relation
\begin{align*}
    &  |||a|||_{\alpha,p}^p  \\
      = & 2^p\alpha(i) +(1+s)^p(\alpha(i+1)-\alpha(i)) +(1-t)^p(\alpha(i+2)-\alpha(i+1))  \\
      = & (2^p-1)\alpha(i) + \alpha(i+2),
\end{align*}
we have
\begin{align*}
  (2^p-1)\alpha(i) + \alpha(i+2) \le 2^p\alpha(i) & +(1+s)^p(\alpha(i+1)-\alpha(i)) \\
   & +(1-s)^p(\alpha(i+2)-\alpha(i+1)) 
\end{align*}
for all $s\in [0, s_0]$, that is,
\[  \alpha(i)-\alpha(i+2) +(1+s)^p(\alpha(i+1)-\alpha(i)) + (1-s)^p (\alpha(i+2)-\alpha(i+1)) \ge 0  \]
for all  $s\in [0, s_0]$.
Since $(1 + s)^p = 1 +ps + {\mathcal O}(s^2)$, we have 
$$
  ps(2\alpha(i+1) - \alpha(i) - \alpha(i+2)) +  {\mathcal O}(s^2) \geq 0. 
$$
Therefore $2\alpha(i+1) - \alpha(i) - \alpha(i+2) \geq 0$, this means that $\alpha$ is concave. 

The case that $i = 0$ is similarly shown.
\end{proof}

\begin{definition} \rm
Suppose that $\alpha$ is concave. Then for  $a \in  M_n({\mathbb C})$, 
we say that $|||a|||_{\alpha,p}:= \varphi_{\alpha}(|a|^p)^{1/p}$ is the 
Schatten-von Neumann $p$-norm of $a$ for a  non-linear trace $\varphi_{\alpha}$ of Choquet type. 
\end{definition}

Now we shall return to consider the case of the algebra $K(H)$ of compact operators. 
Fix a positive number $p \geq 1$. 
\begin{definition} \rm
 Let $\varphi = \varphi_{\alpha}$ be  a non-linear trace of Choquet type associated with  
a monotone increasing function $\alpha: {\mathbb N}_0  \rightarrow [0, \infty)$  with $\alpha(0) = 0$. 
Define $|||a|||_{\alpha,p}:= \varphi_{\alpha}(|a|^p)^{1/p}$ for $a \in K(H)$ admitting $+\infty$. Since $\varphi_{\alpha}$ 
is unitarily invariant, $|||uav|||_{\alpha,p} = |||a|||_{\alpha,p}$ for any unitaries $u$ and $v$ in $B(H)$. 
\end{definition}

\begin{example} \rm
If $\alpha = (0,1,2,\dots,k,k,k, \dots)$, then $\varphi_{\alpha}(a) =  \sum_{i=1} ^{k}  \lambda_i(a)$. 
Thus
$$
|||a|||_{\alpha,p} =( \sum_{i=1} ^{k}  \lambda_i(|a|^p) )^{1/p}
$$
gives a Ky Fan $p$-$k$ norm  $||a||_{p,(k)} $ on $K(H)$.  In fact the triangle inequality on $K(H)$ 
can be checked as follows: First assume that $a,b \in K(H) $ are finite rank operators. 
Then there exist a finite $N$-dimensional subspace $M \subset H$ such that $a,b$ and $a+b$ can be regarded 
as operators on $M$.  Since $M$ is a finite $N$-dimensional subspace, 
we can regard $a,b$ and $a+b$ are in $B(M)  \cong M_N({\mathbb C})$.  
Hence we have $|||a + b|||_{\alpha,p} \leq |||a |||_{\alpha,p} + ||| b|||_{\alpha,p}$.  
Since each $\lambda_i$ is operator norm continuous on $K(H)^+$, the map 
$a \mapsto |||a|||_{\alpha,p}$ is operator norm continuous on $K(H)$.  
The set of finite rank operators is dense in  $K(H)$ with respect to the operator norm. 
Therefore the triangle inequality holds on $K(H)$. The other properties of norm are trivial. 
Thus Ky Fan $p$-$k$ norm  $|| \cdot ||_{p,(k)} $ is in fact a norm on $K(H)$.
\end{example}

\begin{theorem} 
Let $\varphi = \varphi_{\alpha}$ be  a non-linear trace of Choquet type associated with  
a monotone increasing function $\alpha: {\mathbb N}_0  \rightarrow [0, \infty)$  with $\alpha(0) = 0$ and 
$\alpha(1) > 0$.  Put $c_i := \alpha(i) - \alpha(i - 1)$ for $i =1,2,\dots $. Recall that 
$ \varphi_{\alpha} (a)= \sum _{i=1}^{\infty}  c_i \lambda_i(a)$. 
Define $|||a|||_{\alpha,p}:= \varphi_{\alpha}(|a|^p)^{1/p}$ for $a \in K(H)$. 
Then the following conditions are equivalent: 
\begin{enumerate}
\item[$(1)$] $\alpha$ is concave in the sense that 
$\frac{\alpha(i +1) + \alpha(i - 1)}{2} \leq \alpha(i), \;  (i =1,2,3, $ $\dots)$.
\item[$(2)$] $(c_i)_i$ is a decreasing sequence: $c_1 \geq c_2 \geq \dots \geq c_n \geq \dots $.
\item[$(3)$] $||| \ |||_{\alpha,p}$ satisfies the triangle inequality: for any $a,b \in K(H) $, 
$|||a + b|||_{\alpha,p} \leq |||a |||_{\alpha,p} + ||| b|||_{\alpha,p}$ admitting $+\infty$. 
\end{enumerate}
\end{theorem}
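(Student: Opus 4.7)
The plan is to combine the finite-dimensional matrix version (Theorem \ref{theorem:matrix-p-norm}) with an approximation by finite-rank truncations, using lower semi-continuity of $\varphi_{\alpha}$ to pass to the limit. The equivalence (1) $\Leftrightarrow$ (2) is an immediate restatement, so only (2) $\Rightarrow$ (3) and (3) $\Rightarrow$ (1) require real work.

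For (3) $\Rightarrow$ (1), I would simply replay the matrix argument inside $K(H)$. Fix $i \geq 0$ and choose mutually orthogonal minimal projections $p_1, \dots, p_{i+2}$ in $B(H)$; the finite-rank test operators $a, b, c$ built from them in the proof of Theorem \ref{theorem:matrix-p-norm} lie in a finite-dimensional subalgebra, and every quantity $|||\cdot|||_{\alpha,p}$ appearing in that proof depends only on the eigenvalues of these operators and on the values $\alpha(0),\dots,\alpha(i+2)$. Assuming (3) on $K(H)$ specialises to the matrix triangle inequality on this subalgebra, so the derivation transcribes verbatim and yields $2\alpha(i+1) \geq \alpha(i) + \alpha(i+2)$ for every $i \geq 0$.

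For (2) $\Rightarrow$ (3), I would first settle the finite-rank case by a direct reduction: for finite-rank $a, b \in K(H)$, choose a finite-dimensional subspace $M \subset H$ containing $\mathrm{ran}(a)$, $\mathrm{ran}(a^*)$, $\mathrm{ran}(b)$ and $\mathrm{ran}(b^*)$; then $a, b$ and $a+b$ are identified with matrices in $B(M) \cong M_N(\mathbb{C})$, and Theorem \ref{theorem:matrix-p-norm} gives the inequality (as in the Ky Fan $p$-$k$ example preceding this theorem). Next I would observe that $|||\cdot|||_{\alpha,p}$ is lower semi-continuous on $K(H)$: the map $a \mapsto |a|^p$ is norm continuous, $\varphi_{\alpha}$ is lower semi-continuous on $K(H)^+$, and the $p$-th root is continuous and increasing. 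Finally, for general $a, b \in K(H)$, use the canonical truncations from the singular value decomposition $a = \sum_k s_k(a) \langle \cdot, e_k\rangle f_k$, namely $a^{(n)} = \sum_{k=1}^n s_k(a) \langle \cdot, e_k\rangle f_k$, and similarly for $b$. Then $a^{(n)} \to a$ and $b^{(n)} \to b$ in norm, while monotone convergence gives
\[
|||a^{(n)}|||_{\alpha,p}^p = \sum_{k=1}^n c_k s_k(a)^p \;\nearrow\; \sum_{k=1}^\infty c_k s_k(a)^p = |||a|||_{\alpha,p}^p
\]
even when the limit is $+\infty$. Combining lower semi-continuity at $a+b$ with the finite-rank inequality applied to $a^{(n)}, b^{(n)}$ yields
\[
|||a+b|||_{\alpha,p} \leq \liminf_n |||a^{(n)}+b^{(n)}|||_{\alpha,p} \leq \liminf_n \bigl(|||a^{(n)}|||_{\alpha,p} + |||b^{(n)}|||_{\alpha,p}\bigr) = |||a|||_{\alpha,p} + |||b|||_{\alpha,p},
\]
which is the desired triangle inequality, trivially admitting $+\infty$ on either side.

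The main obstacle I anticipate is the bookkeeping in (2) $\Rightarrow$ (3): one must confirm that the singular-value truncation $a^{(n)}$ really has singular values $s_1(a),\dots,s_n(a),0,0,\dots$ so that the $c_k$-weighted partial sums exhaust the full series, and one must check that lower semi-continuity of the scalar functional $\varphi_{\alpha}$ on $K(H)^+$ transfers through the absolute value and the $p$-th power to $|||\cdot|||_{\alpha,p}$ on all of $K(H)$. Neither issue is deep, but both are necessary to make the limit argument go through when $|||a|||_{\alpha,p}$ or $|||b|||_{\alpha,p}$ is infinite.
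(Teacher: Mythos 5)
Your proposal is correct, and for the direction (3) $\Rightarrow$ (1) it coincides with the paper's proof, which also just restricts the triangle inequality to finite-rank operators supported in a finite-dimensional subspace and invokes Theorem \ref{theorem:matrix-p-norm} (letting $n \to \infty$ so that concavity holds at every index). The difference is in (2) $\Rightarrow$ (3). The paper does not pass through the matrix theorem there at all: it rewrites $|||a|||_{\alpha,p}^p = \sum_{k}(c_k-c_{k+1})\,\|a\|_{p,(k)}^p$ directly on $K(H)$, takes for granted that each Ky Fan $p$-$k$ norm satisfies the triangle inequality on $K(H)$ (checked in the preceding Example by finite-rank density and norm continuity of the $\lambda_i$), and then runs an $[0,\infty]$-valued analogue of Lemma \ref{lemma:norm-norm} with $\beta$ the $\ell^p$ "norm" on $c_0$; in other words, it exhibits $|||\cdot|||_{\alpha,p}$ as a monotone norm of a sequence of norms. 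You instead prove the finite-rank case by embedding $a,b,a+b$ into $B(M)\cong M_N(\mathbb{C})$ and citing Theorem \ref{theorem:matrix-p-norm} with the restricted (still concave) $\alpha$, and then extend to all of $K(H)$ by the singular-value truncations $a^{(n)}, b^{(n)}$, using that $|||\cdot|||_{\alpha,p}$ is lower semi-continuous in operator norm (it is a supremum of the continuous functionals $a\mapsto(\sum_{k=1}^n c_k s_k(a)^p)^{1/p}$) and that $|||a^{(n)}|||_{\alpha,p}^p=\sum_{k=1}^n c_k s_k(a)^p$ increases monotonically to $|||a|||_{\alpha,p}^p$, possibly $+\infty$. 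Both arguments are sound; your limiting argument buys a clean and uniform treatment of the $+\infty$ cases and avoids re-deriving the Ky Fan decomposition in infinite dimensions, while the paper's route is structurally more informative, displaying the functional as a monotone composition of genuine norms, which is the same mechanism it already used in the matrix case.
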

\begin{proof}
It is trivial that (1) $\Leftrightarrow$ (2). 

(2) $\Rightarrow$ (3): Suppose that $(c_i)_i$ is a decreasing sequence.  
Let $c_0$ be the set of sequences vanishing at infinity. 
Define $\beta: c_0 \rightarrow [0,\infty]$ by $\beta(x) = (\sum_{k =1}^{\infty} |x_k|^p)^{1/p}$ for 
$x =(x_k)_k \in c_0$. Then $\beta$ satisfies the triangle inequality admitting $+\infty$. 
Recall that Ky Fan $p$-$k$ norm on $K(H)$ is defined as
$$
|| a ||_{p,(k)} := (\lambda_1(|a|^p) + \lambda_2(|a|^p)+ \dots + \lambda_k(|a|^p)))^{1/p}.
$$ 
Then 
\begin{align*}
|||a|||_{\alpha,p}& = \varphi_{\alpha}(|a|^p)^{1/p} = (\sum_{k =1}^{\infty}  c_k \lambda_k(|a|^p))^{1/p} \\
&= 
(\sum_{k =1}^{\infty}( c_k  - c_{k+1})|| a ||_{p,(k)}^p)^{1/p}.
\end{align*}
Since $(c_i)_i$ is a decreasing sequence, $d_k := c_k  - c_{k+1} \geq 0$ for any $k$.  
Define $\nu_k(a):= d_k^{1/p}|| a ||_{p,(k)}$.  
Then  for $a \in K(H)$
$$
|||a|||_{\alpha,p} = \beta((\nu_1(a), \nu_2(a),\dots)).
$$
For any $x =(x_i)_i, y =(y_i)_i \in c_0$, $0 \leq x \leq y$ 
implies $\beta(x) \leq \beta(y)$, where $0 \leq x \leq y$ means that $0 \leq x_i \leq y_i$ for $i = 1,2,\dots$. 
By a similar proof like Lemma~\ref{lemma:norm-norm}, we can show that  
 $|||a|||_{\alpha,p}$  satisfies the triangle inequality admitting $+\infty$. 

(3) $\Rightarrow$ (1): Just 
apply Theorem~\ref{theorem:matrix-p-norm} and take $n \rightarrow \infty$. 
\end{proof}

\begin{definition} \rm
 Let $\varphi = \varphi_{\alpha}$ be  a non-linear trace of Choquet type associated with  
a monotone increasing function $\alpha: {\mathbb N}_0  \rightarrow [0, \infty)$  with $\alpha(0) = 0$ and 
$\alpha(1) > 0$.  
Define $|||a|||_{\alpha,p}:= \varphi_{\alpha}(|a|^p)^{1/p}$ for $a \in K(H)$ admitting $+\infty$.  Then 
$a$ is said to be a {\it Schatten-von Neumann $p$-class operator for a non-linear trace}  $\varphi_{\alpha}$  {\it of Choquet type} if 
$|||a|||_{\alpha,p}= \varphi_{\alpha}(|a|^p)^{1/p}< \infty$.  The weighted Schatten-von Neumann $p$-class  ${\mathcal C}^{\alpha}_p(H)$ is defined as 
 the set of all Schatten-von Neumann $p$-class  operators for a non-linear trace $\varphi_{\alpha}$.   
 \end{definition}

\begin{theorem} 
Let $\varphi = \varphi_{\alpha}$ be  a non-linear trace of Choquet type associated with  
a monotone increasing function $\alpha: {\mathbb N}_0  \rightarrow [0, \infty)$  with $\alpha(0) = 0$ and 
$\alpha(1) > 0$. Assume that  $\alpha$ is concave.
Then the weighted Schatten-von Neumann $p$-class ${\mathcal C}^{\alpha}_p(H)$ 
for a non-linear trace $\varphi_{\alpha}$ 
is a Banach space with respect to the norm $||| \ |||_{\alpha,p}$.  Moreover  ${\mathcal C}^{\alpha}_p(H)$ 
is a $*$-ideal of $B(H)$.   
\end{theorem}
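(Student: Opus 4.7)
The proof closely parallels the $p=1$ case already handled in the excerpt, so I would follow the same skeleton with $p$-power adjustments. First, by the preceding theorem (triangle inequality equivalence), the concavity of $\alpha$ guarantees that $|||\cdot|||_{\alpha,p}$ is a norm on ${\mathcal C}^{\alpha}_p(H)$. The positivity and homogeneity are immediate from $\varphi_{\alpha}(|ka|^p)^{1/p} = |k|\,\varphi_{\alpha}(|a|^p)^{1/p}$; the triangle inequality is exactly the content of the preceding theorem. Hence it remains only to prove completeness and the $*$-ideal property.

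For completeness, I would start with a Cauchy sequence $(a_n)_n$ in ${\mathcal C}^{\alpha}_p(H)$. The key domination
\[
   \|a\|^p = \lambda_1(|a|)^p = \lambda_1(|a|^p) \le \frac{1}{c_1}\sum_{k=1}^{\infty} c_k\lambda_k(|a|^p) = \frac{1}{c_1}|||a|||_{\alpha,p}^p
\]
shows that $(a_n)_n$ is also Cauchy in operator norm, so it converges to some $a \in B(H)$, which is compact as a norm limit of compact operators. Given $\epsilon > 0$, fix $N$ such that for all $n,m \ge N$,
\[
   \sum_{k=1}^{\infty} c_k\lambda_k(|a_n - a_m|^p) < \epsilon^p.
\]
Since $a_m \to a$ in operator norm, $|a_n - a_m| \to |a_n - a|$ in operator norm, and each $\lambda_k$ is norm-continuous, so for every fixed finite $K$, letting $m \to \infty$ gives $\sum_{k=1}^{K} c_k\lambda_k(|a_n - a|^p) \le \epsilon^p$. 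Taking $K \to \infty$ yields $|||a_n - a|||_{\alpha,p} \le \epsilon$, which simultaneously shows $a - a_n \in {\mathcal C}^{\alpha}_p(H)$ (hence $a \in {\mathcal C}^{\alpha}_p(H)$ by the triangle inequality) and $a_n \to a$ in $|||\cdot|||_{\alpha,p}$.

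For the $*$-ideal property, I would rely on the standard singular-value inequalities for compact operators: $s_k(ab) \le s_k(a)\|b\|$, $s_k(ba) \le \|b\|\,s_k(a)$, and $s_k(a^*) = s_k(a)$. Since $\lambda_k(|x|^p) = s_k(x)^p$, raising to the $p$-th power gives $\lambda_k(|ab|^p) \le \|b\|^p \lambda_k(|a|^p)$ and similarly for $ba$ and $a^*$. Multiplying by $c_k$ and summing yields
\[
   |||ab|||_{\alpha,p} \le \|b\|\cdot|||a|||_{\alpha,p}, \quad |||ba|||_{\alpha,p} \le \|b\|\cdot|||a|||_{\alpha,p}, \quad |||a^*|||_{\alpha,p} = |||a|||_{\alpha,p},
\]
establishing that ${\mathcal C}^{\alpha}_p(H)$ is a $*$-ideal in $B(H)$.

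The only step requiring care is the passage to the limit inside the infinite sum in the Cauchy argument; I would handle it by truncating to a finite $K$ first (where norm-continuity of each $\lambda_k$ applies directly) and then invoking monotone convergence in $K$, rather than trying to interchange $m \to \infty$ and the full series at once. All remaining bookkeeping is routine, essentially a transcription of the $p = 1$ proof given earlier in the paper.
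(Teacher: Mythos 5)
Your proposal is correct and follows essentially the same route as the paper's own proof: the concavity/triangle-inequality theorem gives the norm property, the bound $\|a\|\le c_1^{-1/p}|||a|||_{\alpha,p}$ reduces completeness to operator-norm convergence plus a limit in the weighted sum, and the standard singular-value inequalities give the $*$-ideal property. Your truncation to a finite $K$ before letting $m\to\infty$ in the Cauchy estimate is in fact a slightly more careful rendering of the limit interchange that the paper passes over in one line; otherwise the two arguments coincide.
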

\begin{proof}
Since  $\alpha$ is concave, $|||a|||_{\alpha,p}= \varphi_{\alpha}(|a|^p)^{1/p}$ define a norm on  ${\mathcal C}^{\alpha}_p(H)$. 
Let $(a_n)_n$ be a Cauchy sequence in ${\mathcal C}^{\alpha}_p(H)$. Since the operator norm 
$\|a\| = \lambda_1(|a|) \leq \frac{1}{c_1^{1/p}} ||| a |||_{\alpha,p}$,  $(a_n)_n$ is also a Cauchy sequence in the 
operator norm in $B(H)$. Therefore there exists a bounded operator $a \in B(H)$ such that 
$\|a_n - a \| \rightarrow 0$.  Since each $a_n$ is a compact operator, $a$ is also a compact operator.  
We shall show that $(a_n)_n$ converges to $a$ in the norm  $||| \ |||_{\alpha,p}$. Since 
$(a_n)_n$ be a Cauchy sequence in ${\mathcal C}^{\alpha}_p(H)$, for any $\epsilon > 0$ there exists a natural 
number $N$ such that for any natural number $n$ and $m$, if $n \geq N$ and $m \geq N$, then 
$$
(\sum_{k=1}^{\infty} \lambda_k(|a_n -a_m|^p)c_k )^{1/p} < \epsilon .
$$
Let $m \rightarrow \infty$.  Since $\|a_m - a \| \rightarrow 0$ in operator norm, $|a_n - a_m|$ coverges 
to $|a_n - a|$ in operator norm. Since each $ \lambda_k$ is operator norm continuous, 
$$
(\sum_{k=1}^{\infty} \lambda_k(|a_n -a|^p)c_k )^{1/p} \leq \epsilon .
$$
Since 
$$
|||a|||_{\alpha,p} \leq |||a - a_n |||_{\alpha,p} + |||a_n|||_{\alpha,p}
\leq  \epsilon +   |||a_n|||_{\alpha,p}  <  \infty, 
$$
we have $a \in {\mathcal C}^{\alpha}_p(H)$. 
Moreover the above also shows that $(a_n)_n$ converges to $a$ in the norm  $||| \ |||_{\alpha,p}$. Thus 
the weighted $p$-class  ${\mathcal C}^{\alpha}_p(H)$ is a Banach space.  

For any $a \in K(H)$ and $b \in B(H)$,  we have $\lambda_k(|ab|) \leq \lambda_k(|a|) ||b||$, 
$\lambda_k(|ba|) \leq ||b|| \lambda_k(|a|)$ and $\lambda_k(|a^*|) =\lambda_k(|a|)$.  
Therefore $|||ab|||_{\alpha,p} \leq |||a|||_{\alpha,p}||b||$, $|||ba|||_{\alpha,p} \leq ||b|| |||a|||_{\alpha,p}$ 
and $|||a^*|||_{\alpha,p} = |||a|||_{\alpha,p}$.  Hence ${\mathcal C}^{\alpha}_1(H)$ 
is a $*$-ideal of $B(H)$.   
\end{proof}

\begin{proposition} 
Let $\alpha$ and $\beta$ be 
monotone increasing functions $\alpha, \beta: {\mathbb N}_0  \rightarrow [0, \infty)$  with
$\alpha(0)= \beta(0)= 0$, $\alpha(1) > 0$ and $\beta(1) > 0$. 
Consider $p \geq 1$ and $q \geq 1$. Then 
we have the following inclusions among  the weighted Schatten-von Neumann classes:
\begin{enumerate}
\item[$(1)$] If $1 \leq p \leq q$, then ${\mathcal C}^{\alpha}_p(H) \subset {\mathcal C}^{\alpha}_q(H)$. 
\item[$(2)$] If for any $n \in \mathbb N$ , 
$c_n:=\alpha(n) -\alpha(n-1) \leq d_n:= \beta(n) - \beta(n-1)$, then 
we have that ${\mathcal C}^{\beta}_p(H) \subset {\mathcal C}^{\alpha}_p(H)$.
\end{enumerate}
\end{proposition}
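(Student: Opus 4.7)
The plan is to prove both inclusions by direct comparison of the series expressions for the norms, using the identity $|||a|||_{\alpha,p}^p = \sum_{k=1}^{\infty} c_k \lambda_k(|a|^p) = \sum_{k=1}^{\infty} c_k \lambda_k(|a|)^p$ (and analogously for $\beta$), which follows from the formula for $\varphi_\alpha$ established in the Proposition of Section~2 together with the spectral identity $\lambda_k(|a|^p) = \lambda_k(|a|)^p$. I will handle part (2) first since it is immediate, then part (2)—oops, I mean then part (1).

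For part (2), I would take $a \in \mathcal{C}^{\beta}_p(H)$ and simply compare the two series term by term: since $c_n \leq d_n$ for every $n \in \mathbb{N}$ and each $\lambda_k(|a|^p) \geq 0$, we get
\[
|||a|||_{\alpha,p}^p \;=\; \sum_{k=1}^{\infty} c_k \lambda_k(|a|)^p \;\leq\; \sum_{k=1}^{\infty} d_k \lambda_k(|a|)^p \;=\; |||a|||_{\beta,p}^p \;<\; \infty,
\]
which gives $a \in \mathcal{C}^{\alpha}_p(H)$. Nothing more is required.

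For part (1), the key observation is that compactness of $a$ forces $\lambda_k(|a|) \to 0$. Hence, given $a \in \mathcal{C}^{\alpha}_p(H)$, there exists $N$ such that $\lambda_k(|a|) \leq 1$ for all $k \geq N$. For such $k$, since $q \geq p \geq 1$, we have $\lambda_k(|a|)^q \leq \lambda_k(|a|)^p$, so the tail satisfies
\[
\sum_{k \geq N} c_k \lambda_k(|a|)^q \;\leq\; \sum_{k \geq N} c_k \lambda_k(|a|)^p \;\leq\; |||a|||_{\alpha,p}^p \;<\; \infty.
\]
The initial segment $\sum_{k < N} c_k \lambda_k(|a|)^q$ is a finite sum of finite quantities (using that $\lambda_k(|a|) \leq \|a\| < \infty$), so $|||a|||_{\alpha,q}^q < \infty$ and $a \in \mathcal{C}^{\alpha}_q(H)$.

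There is no real obstacle here; the argument is a straightforward application of the series representation from the earlier Proposition together with the fact that a compact operator has eigenvalues tending to zero. The only point requiring a small amount of care is ensuring that the bound $\lambda_k(|a|) \leq 1$ holds eventually so that monotonicity of $t \mapsto t^q$ relative to $t \mapsto t^p$ on $[0,1]$ can be exploited; this is precisely where the hypothesis that $H$ is only acted on by compact operators (as opposed to bounded ones) is essential, as the analogous inclusion is false on $B(H)$.
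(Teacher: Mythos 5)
Your proof is correct and follows essentially the same route as the paper: part (2) is the term-by-term comparison $\sum_k c_k\lambda_k(|a|^p) \le \sum_k d_k\lambda_k(|a|^p)$, and part (1) splits the series at an index $N$ beyond which $\lambda_k(|a|)\le 1$ so that $\lambda_k(|a|)^q\le\lambda_k(|a|)^p$ controls the tail, with the finite initial segment handled trivially. No gaps; the argument matches the paper's proof.
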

\begin{proof}
(1) Recall that for any  $x \in {\mathbb R}$ with $0 \leq x <1$ and $1 \leq p \leq q$, we have that 
$0 \leq x^q \leq x^p <1$.  Take $a \in {\mathcal C}^{\alpha}_p(H)$. Then 
$$
|||a|||_{\alpha,p} = \varphi_{\alpha}(|a|^p)^{1/p} = (\sum_{k =1}^{\infty}  c_k \lambda_k(|a|^p))^{1/p} < \infty.
$$
Since $\lambda_n(|a|) \rightarrow 0$, there exists $N \in {\mathbb N}$ such that $\lambda_n(|a|) < 1$ for any $n\ge N$.  
Therefore 
\begin{align*}
|||a|||_{\alpha,q}^q & = \varphi_{\alpha}(|a|^q) 
= \sum_{k =1}^{\infty}  c_k \lambda_k(|a|^q)\\
& = \sum_{k =1}^{N-1}  c_k \lambda_k(|a|^q)+  \sum_{k =N}^{\infty}  c_k \lambda_k(|a|^q)\\
& \leq (\sum_{k =1}^{N-1}  c_k \lambda_k(|a|^q)) +  \sum_{k =N}^{\infty}  c_k \lambda_k(|a|^p)
< \infty.
\end{align*}
Hence $a \in {\mathcal C}^{\alpha}_q(H)$. \\
(2) Take $a \in {\mathcal C}^{\beta}_p(H)$. Then 
$$
|||a|||_{\alpha,p} = (\sum_{k =1}^{\infty}  c_k \lambda_k(|a|^p))^{1/p} 
\leq  (\sum_{k =1}^{\infty}  d_k \lambda_k(|a|^p))^{1/p}
< \infty .
$$
Hence $a \in {\mathcal C}^{\alpha}_p(H)$.
\end{proof}

\section{Non-linear traces of Sugeno type}
In this section we study non-linear traces of Sugeno type on the algebra $K(H)$ of  compact operators. 
We recall  the Sugeno integral with respect to a monotone measure on a finite set 
$\Omega  = \{1,2, \dots, n\}$.  

\begin{definition} \rm 
The  discrete Sugeno integral  of $f = (x_1, x_2,\dots, x_n) \in [0,\infty)^n$ 
with respect to a monotone measure $\mu$ on a finite set 
$\Omega  = \{1,2, \dots, n\}$ is defined as follows:  
$$
{\rm (S)}\int f d\mu = \vee_{i=1}^{n} (x_{\sigma(i) } \wedge \mu(A_i) ) , 
$$
where $\sigma$ is a permutation on $\Omega$ such that  
$x_{\sigma(1)} \geq x_{\sigma(2)} \geq  \dots \geq x_{\sigma(n)}$, 
$A_i = \{\sigma(1),\sigma(2),\dots,\sigma(i)\}$ and 
$\vee =\max$ , $\wedge = \min$. 
Here we should note that 
$$
 f = \vee_{i=1}^{n} (x_{\sigma(i) } \chi_{A_i}) .
$$
\end{definition}

Let $A = {\mathbb C}^n$ and define 
$(\text{{\rm S-}}\varphi)_{\mu} :( {\mathbb C}^n)^+ \rightarrow {\mathbb C}^+$ 
by the  Sugeno integral $(\text{{\rm S-}}\varphi)_{\mu}(f) ={\rm (S)}\int f d\mu$.  
Then $(\text{{\rm S-}}\varphi)_{\mu}$ is a non-linear monotone positive map.

We shall consider a operator version of the discrete Sugeno integral. 

\begin{definition}
\rm
Let $\alpha: {\mathbb N}_0  \rightarrow [0, \infty)$ be a 
monotone increasing function with $\alpha(0) = 0$, that is, 
$$
0 = \alpha(0) \leq  \alpha(1) \leq  \alpha(2) \leq \cdots \leq  \alpha(n) \leq \cdots .
$$
We denote by 
$\mu_{\alpha}$ the associated permutation invariant 
monotone measure on ${\mathbb N}$. 
Define 
$\psi_{\alpha} : K(H)^+ \rightarrow  [0, \infty]$
as follows:  For $a \in K(H)^+$, \\
let 
$\lambda(a) = (\lambda_1(a),\lambda_2(a),\ldots, \lambda_n(a), \ldots)$ be the list of the 
eigenvalues of $a$ in decreasing order :
$\lambda_1(a) \geq \lambda_2(a) \geq \cdots \geq \lambda_n(a) \geq \cdots $ with 
counting multiplicities, where  the sequence $(\lambda_n(a))_n$ converges to zero.  We have  
$a = \sum_{n=1}^{\infty} \lambda_n(a)p_n$, where each $p_n$ is a one dimensional spectral projection of $a$.  

Let
\begin{align*}
\psi_{\alpha}(a)  &= \lor_{i=1} ^{\infty} ( \lambda_i(a) \land \mu_{\alpha}(A_i)) \\
&= \lor_{i=1} ^{\infty} ( \lambda_i(a) \land \alpha( ^{\#}A_i) ) \\
&= \lor_{i=1} ^{\infty} ( \lambda_i(a) \land \alpha(i)) ,
\end{align*}
where $A_i = \{1,2,\dots,i \}$.
We call $\psi_{\alpha}$ the non-linear trace of Sugeno type on the algebra $K(H)$ of  compact operators 
associated with $\alpha$. 
Note that $\psi_{\alpha}$ is lower semi-continuous on $K(H)^+$ in norm.  
In fact, let 
$$
S_n(a) =  \lor_{i=1} ^{n} ( \lambda_i(a) \land \alpha(i)).
$$
Since 
each $\lambda_i$ is norm continuous on $K(H)^+$,  $S_n: K(H)^+ \rightarrow  [0, \infty)$ 
is also norm continuous on  $K(H)^+$.  Moreover 
$$
\psi_{\alpha}(a)  = \sup \{S_n(a) \ | \ n \in  {\mathbb N} \}.
$$
Therefore $\psi_{\alpha}$ is lower semi-continuous on $K(H)^+$ in norm.

By Courant-Fischer's mini-max theorem, for any projection $p$
$$
\lambda_i(pap) \leq \|p\| \lambda_i(a) \|p\| \leq \lambda_i(a).
$$
Therefore we have that 
$$
\psi_{\alpha}(pap) \leq   \psi_{\alpha}(a) .
$$
\end{definition}

\begin{example}
\begin{enumerate}
  \item[(1)] If $\alpha =(0,1,2,3,\dots)$, then  
$\psi_{\alpha}(a) = \lor_{i=1} ^{\infty} ( \lambda_i(a) \land i) $.
Thus $\psi_{\alpha}(a)  \geq n$ if and only if $ \lambda_n(a) \geq n$. 
Hence $\psi_{\alpha}$ is something like h-index. 
  \item[(2)] If $\alpha =(0,r,r,r,\dots)$ for some $r > 0$, then $\psi_{\alpha}(a) =r$ 
if $\lambda_1(a) \geq r$ and $\psi_{\alpha}(a) =  \lambda_1(a)$ if $\lambda_1(a) <r$.
  \item[(3)]  If $\alpha = (0,1,2,\dots,k,k,\dots)$, then $\psi_{\alpha}(a) = k$ if  $\lambda_k(a) \geq k$. 
Assume that $\lambda_k(a) < k$. If  $\lambda_{i+1}(a) < i \leq  \lambda_{i}(a)$, then $\psi_{\alpha}(a) = i$. 
If $i \leq  \lambda_{i}(a)$ and $i \leq  \lambda_{i+1}(a) < i+1$, then $\psi_{\alpha}(a) = \lambda_{i+1}(a)$. 
\end{enumerate}
\end{example}


\begin{lemma}
If $p$ is a finite rank projection and $c$ is a positive constant,  then $\psi_{\alpha}(cp) = c \land \alpha(\dim p)$.
\end{lemma}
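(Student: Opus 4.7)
The plan is straightforward computation directly from the definition of $\psi_{\alpha}$, using that the eigenvalue sequence of $cp$ is extremely simple.

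First I would set $n = \dim p$. Since $p$ is a projection of rank $n$ and $c > 0$, the operator $cp$ is positive of finite rank with eigenvalue sequence
\[
\lambda_i(cp) = \begin{cases} c & \text{if } 1 \leq i \leq n, \\ 0 & \text{if } i > n. \end{cases}
\]
Plugging this into the definition
\[
\psi_{\alpha}(cp) = \bigvee_{i=1}^{\infty} (\lambda_i(cp) \wedge \alpha(i))
\]
immediately collapses the terms with $i > n$ to $0$, leaving
\[
\psi_{\alpha}(cp) = \bigvee_{i=1}^{n} (c \wedge \alpha(i)).
\]

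Next I would use the monotonicity of $\alpha$ (which is built into the definition: $\alpha(1) \leq \alpha(2) \leq \cdots \leq \alpha(n)$) to observe that $c \wedge \alpha(i)$ is a monotone non-decreasing function of $i$ on $\{1, 2, \dots, n\}$. Hence the maximum is attained at $i = n$, giving
\[
\psi_{\alpha}(cp) = c \wedge \alpha(n) = c \wedge \alpha(\dim p),
\]
which is the claim.

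There is no real obstacle here; the only minor point is making sure that contributions from $i > n$ genuinely vanish, which holds because $0 \wedge \alpha(i) = 0$ regardless of the value of $\alpha(i)$. The lemma is essentially a sanity-check computation that will be invoked later when evaluating $\psi_{\alpha}$ on simple test operators.
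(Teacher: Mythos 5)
Your proof is correct and follows essentially the same route as the paper: a direct computation of $\psi_{\alpha}(cp)=\bigvee_{i=1}^{n}(c\wedge\alpha(i))$ from the eigenvalue sequence of $cp$. The only cosmetic difference is that the paper splits into the cases $c\le\alpha(n)$ and $c>\alpha(n)$, whereas you invoke monotonicity of $i\mapsto c\wedge\alpha(i)$ to see the supremum is attained at $i=n$; both arguments are equally valid.
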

\begin{proof}
Let $n = \dim p$.  Then $\psi_{\alpha}(cp) = \lor_{i=1} ^{n} ( \lambda_i(cp) \land \alpha(i)) $.
Assume that $c \leq \alpha(n)$.  If $i < n$, then $\lambda_i(cp) \land \alpha(i) = c \land \alpha(i) \leq c$. 
If $i = n$, then $\lambda_n(cp) \land \alpha(n) = c \land \alpha(n) =c$. Therefore $\psi_{\alpha}(cp) = c$. 
Assume that $c >  \alpha(n)$.  Then $\psi_{\alpha}(cp) = \lor_{i=1} ^{n} ( \alpha(i)) =  \alpha(n)$. 
Therfore  $\psi_{\alpha}(cp) = c \land \alpha(\dim p)$.
\end{proof}

\begin{definition} \rm 
Let $\psi : (K(H))^+ \rightarrow  {\mathbb C}^+$  be a non-linear positive map.
\begin{itemize}
  \item $\psi$ is {\it positively F-homogeneous} if 
$\psi(kI \land a) = k \land \psi(a)$ for any $a \in  (K(H))^+$ and any scalar $k \geq 0$.
  \item $\psi$ is {\it comonotonic F-additive on the spectrum} if 
$$
\psi((f\lor g) (a)) = \psi(f(a)) \lor \psi(g(a)) 
$$  
for any $a \in  (K(H))^+$ and 
any comonotonic functions $f$ and $g$   in $C(\sigma(a))^+$  with $f(0) = g(0) = 0$, where 
$f(a)$ is a functional calculus of $a$ by $f$. 
  \item $\psi$ is {\it monotonic increasing F-additive on the spectrum} if 
$$
\psi((f \lor g)(a)) = \psi(f(a)) \lor  \psi(g(a))
$$  
for any $a \in  (K(H))^+$ and 
any monotone increasing functions $f$ and $g$  in $C(\sigma(a))^+$  with $f(0) = g(0) = 0$. 
Then by induction, we also have 
\[   \psi(\bigvee_{i=1}^n f_i(a)) = \bigvee_{i=1}^n \psi(f_i(a))  \]
for any monotone increasing functions $f_1, f_2, \dots, f_n$ in  $C(\sigma(a))^+$ with 
$f_i(0) = 0$ for $i=1,2,\dots,n$.
\end{itemize}
\end{definition}

We shall characterize non-linear traces of Sugeno type on  $(K(H))^+$. 
\begin{theorem} 
Let $\psi : (K(H))^+ \rightarrow  {\mathbb C}^+$ be a non-linear 
positive map.  Then the following are equivalent:
\begin{enumerate}
\item[$(1)$]  $\psi$ is a non-linear trace $\psi = \psi_{\alpha}$  of Sugeno type associated with  
a monotone increasing function $\alpha: {\mathbb N}_0 \rightarrow [0, \infty)$  with $\alpha(0) = 0$.
\item[$(2)$] $\psi$ is monotonic increasing F-additive on the spectrum, unitarily invariant, monotone,  
positively F-homogeneous, lower semi-continuous on $K(H)^+$ in norm 
and for any finite rank positive operator $a \in F(H)^+$, 
$$\lim_{c\rightarrow \infty} \psi(ca) < +\infty .$$ 
\item[$(3)$] $\psi$ is comonotonic F-dditive on the spectrum, unitarily invariant, monotone and 
positively F-homogeneous, lower semi-continuous on $K(H)^+$ in norm 
and for any finite rank positive operator $a \in F(H)^+$, 
$$\lim_{c\rightarrow \infty} \psi(ca) < +\infty .$$  
\end{enumerate}
\end{theorem}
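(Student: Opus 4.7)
My plan is to mirror the proof of the Choquet characterization theorem given just above, replacing sums by suprema and products by the lattice operation $\wedge$ throughout. The four implications $(1)\Rightarrow(2)$, $(3)\Rightarrow(2)$, $(2)\Rightarrow(1)$, and $(1)\Rightarrow(3)$ will be treated in turn, with only $(2)\Rightarrow(1)$ requiring substantial new work.

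For $(1)\Rightarrow(2)$ I would verify each property directly from the formula $\psi_{\alpha}(a)=\bigvee_i(\lambda_i(a)\wedge\alpha(i))$. Monotonicity follows from the mini-max principle, unitary invariance is immediate, and lower semi-continuity in norm is already established in the definition of $\psi_{\alpha}$. The two lattice identities $\max_i\min(k,x_i)=\min(k,\max_i x_i)$ and $(a\wedge c)\vee(b\wedge c)=(a\vee b)\wedge c$, combined with the observation that $f\vee g$ is monotone increasing whenever $f,g$ are, deliver positive F-homogeneity and monotonic increasing F-additivity respectively. The finiteness condition follows from $\lim_{c\to\infty}\psi_{\alpha}(ca)\le\alpha(\mathrm{rank}(a))<\infty$. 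The implication $(3)\Rightarrow(2)$ is immediate since any pair of monotone increasing functions is comonotonic.

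The heart of the argument is $(2)\Rightarrow(1)$. I would fix a resolution $I=\sum_n p_n$ of the identity by minimal projections and define $\alpha(i):=\lim_{c\to\infty}\psi(c(p_1+\cdots+p_i))$; unitary invariance makes this independent of the choice of minimal projections, monotonicity of $\psi$ makes $\alpha$ monotone increasing, and the finiteness hypothesis ensures $\alpha(i)<\infty$. The crucial computation is that $\psi(kq)=k\wedge\alpha(\dim q)$ for every finite rank projection $q$ and every $k>0$: for $c\ge k$ the functional calculus gives $kI\wedge cq=kq$, so positive F-homogeneity yields $\psi(kq)=k\wedge\psi(cq)$, and letting $c\to\infty$ together with monotonicity of $c\mapsto\psi(cq)$ produces the desired formula (the extension from $q=p_1+\cdots+p_i$ to arbitrary rank-$i$ projections uses unitary invariance). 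For a finite rank positive $b=\sum_{i=1}^n\lambda_i(b)q_i$, the functions $g_i\in C(\sigma(b))^+$ defined by $g_i(\lambda_j(b))=\lambda_i(b)$ if $j\le i$ and $0$ otherwise are monotone increasing with $g_i(0)=0$, and satisfy $b=\bigvee_{i=1}^n g_i(b)$ (checked pointwise on $\sigma(b)$) together with $g_i(b)=\lambda_i(b)(q_1+\cdots+q_i)$. Iterated monotonic increasing F-additivity then forces $\psi(b)=\bigvee_i(\lambda_i(b)\wedge\alpha(i))=\psi_{\alpha}(b)$. The extension to general $a\in K(H)^+$ follows by taking the canonical finite rank truncations $b_n=\sum_{k=1}^n\lambda_k(a)p_k$ increasing to $a$ in norm and invoking lower semi-continuity together with monotonicity for both $\psi$ and $\psi_{\alpha}$, exactly as in the Choquet case.

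For $(1)\Rightarrow(3)$ I would reuse the injective map $\tau\colon\mathbb{N}\to\mathbb{N}$ constructed in the proof of the Choquet theorem, which simultaneously rearranges $(f(\lambda_n(a)))_n$ and $(g(\lambda_n(a)))_n$ in decreasing order when $f,g$ are comonotonic; the computation from $(1)\Rightarrow(2)$ then transfers \emph{mutatis mutandis} since the lattice identities involved are permutation invariant. The main obstacle I anticipate is in $(2)\Rightarrow(1)$, specifically establishing $\psi(kq)=k\wedge\alpha(\dim q)$ for an \emph{arbitrary} finite rank projection $q$ rather than merely for the partial sums $p_1+\cdots+p_i$ used to define $\alpha$; this rests decisively on unitary invariance together with the correct interpretation of $kI\wedge cq$ via continuous functional calculus on the commuting pair $\{kI,cq\}$. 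A secondary delicate point is the final norm approximation, which requires both lower semi-continuity (for $\psi(a)\le\liminf_n\psi(b_n)$) and monotonicity (for $\psi(b_n)\le\psi(a)$) to conclude $\psi(a)=\sup_n\psi(b_n)=\sup_n\psi_{\alpha}(b_n)=\psi_{\alpha}(a)$.
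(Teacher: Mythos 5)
Your proposal is correct and follows essentially the same route as the paper: defining $\alpha(i)=\lim_{c\to\infty}\psi(c(p_1+\cdots+p_i))$, using positive F-homogeneity via $kI\wedge cq=kq$ to evaluate $\psi$ on scalar multiples of finite rank projections, decomposing a finite rank $b$ as $\bigvee_i g_i(b)$ with monotone increasing step functions (the paper's $f_i$), and passing to general compact $a$ by truncations with lower semi-continuity and monotonicity, with $(1)\Rightarrow(3)$ handled by the same injective rearrangement $\tau$ from the Choquet theorem. The only difference is organizational: you isolate $\psi(kq)=k\wedge\alpha(\dim q)$ as a preliminary lemma, whereas the paper performs that computation inline inside the F-additivity calculation.
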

\begin{proof}
(1)$\Rightarrow$(2) Assume that $\psi$ is a non-linear trace $\psi = \psi_{\alpha}$  of Sugeno type associated with  
a monotone increasing function $\alpha$. 
For $a \in K(H)^+$, 
let 
$\lambda(a) = (\lambda_1(a),\lambda_2(a),\dots, \lambda_n(a), \dots)$ be the 
eigenvalue sequence of $a$. We have  
$a = \sum_{n=1}^{\infty} \lambda_n(a)p_n$, where each $p_n$ is a one dimensional spectral projection of $a$. 
Then the spectrum $\sigma(a) = \{\lambda_i(a) \ | \ i = 1,2, \dots \} \cup \{0\}$. 
For any monotone increasing functions $f$ and $g$  in $C(\sigma(a))^+$ with $f(0) = g(0) = 0$, 
we have the  spectral decompositions 
\begin{gather*}
f(a) = \sum_{i=1}^{\infty} f(\lambda_i(a)) p_i, \ \ \ g(a) = \sum_{i=1}^{\infty} g(\lambda_i(a)) p_i,   \\
\text{and } (f \lor g)(a) = \sum_{i=1}^{\infty}(f(\lambda_i(a)) \lor (g(\lambda_i(a)))) p_i. 
\end{gather*}
Since $f \lor g$ is also monotone increasing on $\sigma(a)$ as well as $f$ and $g$, we have 
\begin{gather*}
f(\lambda_1(a)) \geq f(\lambda_2(a)) \geq \cdots \geq f(\lambda_n(a)) \geq \cdots,    \\
g(\lambda_1(a)) \geq g(\lambda_2(a)) \geq \cdots \geq g(\lambda_n(a))\geq \cdots,   \\
\text{and }  (f \lor g)(\lambda_1(a)) \geq (f \lor g)(\lambda_2(a)) \geq \cdots \geq (f \lor g)(\lambda_n(a))\geq \cdots.
\end{gather*}
Since $f$ and $g$  in $C(\sigma(a))$ with $f(0) = g(0) = 0$,  the sequences  $(f(\lambda_n(a)))_n$,  
$(g(\lambda_n(a)))_n$ and $((f \lor g)(\lambda_n(a)))_n$ converge to $0$. Hence  $f(a), g(a)$  and 
$f(a) \lor g(a) := (f \lor g)(a)$ are positive compact operators.   
Therefore
\begin{align*}
& \psi_{\alpha}((f \lor g)(a))  = \lor_{i=1} ^{\infty} ( (f \lor g)(\lambda_i(a)) \land \alpha(i))\\
 = & \lor_{i=1} ^{\infty} (((f (\lambda_i(a)) \lor g (\lambda_i(a)) ) \land \alpha(i))\\
 = & \lor_{i=1} ^{\infty} (((f (\lambda_i(a)) \land \alpha(i))  \lor (g(\lambda_i(a)) \land \alpha(i)) )  \\
 = & (\lor_{i=1} ^{\infty} ( (f (\lambda_i(a)) \land \alpha(i)))  \lor ( \lor_{i=1} ^{\infty} ( (g (\lambda_i(a)) \land \alpha(i)))\\
 = & \psi_{\alpha}(f(a)) \lor  \psi_{\alpha}(g(a)) . 
\end{align*}
Thus $\psi_{\alpha}$ is monotonic increasing F-additive on the spectrum. 

For $a,b \in K(H)^+$, let 
$\lambda(a) = (\lambda_1(a),\lambda_2(a),\dots, \lambda_n(a), \dots)$ and 
$\lambda(b) = (\lambda_1(b),\lambda_2(b),\dots,$ $\lambda_n(b), \dots )$ be  the 
eigenvalue sequences of $a$ and $b$. 
Assume that $a \leq b$. 
By the mini-max  principle for eigenvalues, we have that 
$\lambda_i(a) \leq \lambda_i(b)$ for $i = 1,2,\dots$.  
Hence 
\begin{align*}
\psi_{\alpha}(a) & = \lor_{i=1} ^{\infty} ( \lambda_i(a) \land \alpha(i)) \\
& \leq \lor_{i=1} ^{\infty} ( \lambda_i(b) \land \alpha(i)) = \psi_{\alpha}(b).
\end{align*}
Thus $\psi_{\alpha}$ is monotone. 

For a positive scalar $k$ , $kI \land a = \sum_{i=1}^{\infty}(k \land \lambda_i(a)) p_i$ and 
$$
\lambda(kI \land a) = (k\land \lambda_1(a),k\land \lambda_2(a),\dots,k \land \lambda_n(a), \dots )
$$ 
is the list of the 
eigenvalues of $ka$ in decreasing order, hence 
\begin{align*}
\psi_{\alpha}(k \land a) &= \lor_{i=1} ^{\infty} ( kI \land \lambda_i(a) \land \alpha(i)) \\
 &= k \land (\lor_{i=1} ^{\infty} (\lambda_i(a) \land \alpha(i)) ) 
 = k  \land \psi_{\alpha}(a). 
\end{align*}
Thus $\psi_{\alpha}$ is positively F-homogeneous. 
It is clear that $\psi_{\alpha}$ is unitarily invariant by definition. 
 We already showed that $\varphi_{\alpha}$ is 
lower semi-continuous on $K(H)^+$ in norm. 
Moreover for any finite rank positive operator $a \in F(H)^+$ such that 
$a = \sum_{i=1}^n \lambda_i(a) p_i$
be the spectral decomposition of $a$, where 
$\lambda(a) = (\lambda_1(a),\lambda_2(a),\dots, \lambda_n(a), 0, \dots)$ 
is the list of the 
eigenvalues of $a$ in decreasing order with counting multiplicities with $\lambda_n(a) \not= 0.$ 
Then 
\begin{align*}
\lim_{c\rightarrow \infty} \psi_{\alpha}(ca) 
 & = \lim_{c\rightarrow \infty} \lor_{i=1} ^{n} (c \lambda_i(a) \land \alpha(i))  \\
 & = \lim_{c\rightarrow \infty} \lor_{i=1} ^{n} (\alpha(i))
    = \alpha (n) < +\infty. 
\end{align*}

(2)$\Rightarrow$(1)  
Assume that $\psi$ is monotonic increasing F-additive on the spectrum, unitarily invariant, monotone,  
positively F-homogeneous. We also assume that $\varphi $ is lower semi-continuous on $K(H)^+$ in norm and 
$\lim_{c\rightarrow \infty} \psi(ca) < +\infty $ for any finite rank operator $a \in F(H)^+$.
Let $I = \sum_n^{\infty} p_n$ be the decomposition of the identity by 
minimal projections. Define  a function $\alpha: {\mathbb N}_0 \rightarrow [0, \infty)$  with $\alpha(0) = 0$
by 
$$
\alpha (i) =\lim_{c\rightarrow \infty} \psi(c(p_1+ p_2 + \dots + p_i))  <\infty. 
$$
Since  $\varphi $ is unitarily invariant, $\alpha$ does not depend 
on the choice of minimal projections. Since  $\varphi $ is monotone, $\alpha$ is monotone increasing. 
For a positive finite rank operator $b \in F(H)^+$, 
let 
$b = \sum_{i=1}^{n} \lambda_i(b) q_i$
be the spectral decomposition of $b$, where 
$\lambda(b) = (\lambda_1(b),\lambda_2(b),\dots, \lambda_n(b), 0, \dots)$ is the  
eigenvalue sequence of $b$.  
Define  functions $f, f_1, f_2, \dots, f_n \in C(\sigma(b))^+$  by $f(x) =x$ and for $i = 1,2,\dots,n$
$$
f_i = \lambda_i(b) \chi_{\{\lambda_1(b),\lambda_2(b),\dots, \lambda_i(b)\}} = \lambda_i(b)I \land 
c \chi_{\{\lambda_1(b),\lambda_2(b),\dots, \lambda_i(b)\}}
$$
for sufficient large $c \geq \lambda_1(b) = \|b\|$, which does not depend on such $c$. 
Each $f_i$ is monotone increasing function on $\sigma(b)$. Since 
$f = \lor_{i=1} ^{n} f_i$, we have that 
$
b = (\lor_{i=1} ^{n} f_i)(b). 
$
Since $\psi$ is monotonic increasing F-additive on the spectrum and positively F-homogeneous, 
we have that 
\begin{align*}
&  \psi(b) = \psi((\lor_{i=1} ^{n} f_i)(b))  = \lor_{i=1} ^{n}\psi(f_i(b)) \\
= &  \lor_{i=1} ^{n}\psi((\lambda_i(b)I \land 
c \chi_{\{\lambda_1(b),\lambda_2(b),\dots, \lambda_i(b)\}})(b)) \\
= &   \lor_{i=1} ^{n}(\lambda_i(b) \land 
\psi((c \chi_{\{\lambda_1(b),\lambda_2(b),\dots, \lambda_i(b)\}})(b)) \\
= &  \lor_{i=1} ^{n} (\lambda_i(b) \land \psi(c(q_1 + q_2 + \dots q_i)) \\
= &  \lor_{i=1} ^{n} (\lambda_i(b) \land (\lim_{c\to \infty}\psi(c(q_1 + q_2 + \dots q_i)) \\
= &  \lor_{i=1} ^{n} (\lambda_i(b) \land \alpha(i)) = \psi_{\alpha}(b).
\end{align*}
Next, 
for any positive compact operator $a \in K(H)^+$, 
let 
$\lambda(a) = (\lambda_1(a),\lambda_2(a),\dots, \lambda_n(a), \dots)$ be the 
eigenvalue sequence of $a$.   We have  
$a = \sum_{n=1}^{\infty} \lambda_n(a)p_n$, where each $p_n$ is a one dimensional spectral projection of $a$.  
Put $b_n = \sum_{k=1}^{n} \lambda_k(a)p_k$.  Then the sequence $(b_n)_n$ is increasing and converges 
to $a$ in norm.  Since $\psi$ and $\psi_{\alpha}$  are lower semi-continuous on $K(H)^+$ in norm, monotone maps  and 
$\psi(b_n) = \psi_{\alpha}(b_n)$, we conclude that  $ \psi(a) = \psi_{\alpha}(a)$ by taking their limits. 
Therefore $\psi$ is a non-linear trace $\psi_{\alpha}$  of Sugeno type associated with  
a monotone increasing function $\alpha$. 

(3)$\Rightarrow$(2)  It is clear from the fact that any monotone increasing functions $f$ and $g$  in $C(\sigma(a))$ 
are comonotonic. 

(1)$\Rightarrow$(3) 
For $a \in K(H)^+$, 
let $a = \sum_{i=1}^{\infty} \lambda_i(a) p_i$
be the spectral decomposition of $a$, where 
$\lambda(a) = (\lambda_1(a),\lambda_2(a),\dots, \lambda_n(a),\dots )$ is the list of the 
eigenvalue sequence of $a$.  
For any comonotonic functions $f$ and $g$  in $C(\sigma(a))^+$ with $f(0) = g(0) = 0$, 
there exists an injective map 
$\tau: {\mathbb N} \rightarrow {\mathbb N}$ such that 
$$f(\lambda_{\tau(1)}(a)) \geq f(\lambda_{\tau(2)}(a)) \geq \cdots \geq f(\lambda_{\tau(n)}(a)) \geq \cdots,
$$
$$
g(\lambda_{\tau(1)}(a)) \geq g(\lambda_{\tau(2)}(a)) \geq \cdots \geq g(\lambda_{\tau(n)}(a)) \geq \cdots,
$$
and 
$f(a) = \sum_{i=1}^{\infty} f(\lambda_{\tau(i)}(a)) p_{\tau(i)}$, 
               $g(a) = \sum_{i=1}^{\infty} g(\lambda_{\tau(i)}(a)) p_{\tau(i)}$.  Moreover 
$\lambda_{\tau(i)}(a)$ converges to $0$. 
Considering this fact, we can prove  (1)$\Rightarrow$(3) similarly as (1)$\Rightarrow$(2). 
\end{proof}


\section{Triangle inequality for non-linear traces of Sugeno type}

In this section we discuss the triangle inequality and  trace class operators for non-linear traces of Sugeno type. 
We always assume that 
$\alpha: {\mathbb N}_0 \rightarrow [0, \infty)$ is  a monotone increasing function with $\alpha(0) = 0$ 
and also $\alpha(1) > 0$. 

\begin{lemma}
For $a \in K(H)^+$ and projections $p,q \in B(H)$, if $0 \leq p \leq q$, 
then $\psi_{\alpha}(pap) \leq \psi_{\alpha}(qaq)$. 
\end{lemma}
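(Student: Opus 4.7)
The plan is to reduce the claim to the already-established inequality $\psi_{\alpha}(pbp) \le \psi_{\alpha}(b)$ that was noted in the definition of $\psi_{\alpha}$ (as a consequence of Courant--Fischer's mini-max theorem applied through $\lambda_i(pbp) \le \lambda_i(b)$). The key algebraic observation is that $p \le q$ for projections forces $pq = qp = p$, so that
\[
 pap = (pq)\,a\,(qp) = p(qaq)p.
\]
Thus the operator inside $\psi_{\alpha}$ on the left-hand side is literally the compression of $qaq$ by the projection $p$.

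First, I would verify that the operators to which $\psi_{\alpha}$ is being applied are in $K(H)^+$: since $a \in K(H)^+$ and $q \in B(H)$, the operator $qaq$ lies in $K(H)^+$ (the compacts form a two-sided ideal and $qaq = (aq)^*(aq) \ge 0$), and similarly $pap \in K(H)^+$. Next, I would apply the observation from the definition of $\psi_{\alpha}$ with $b := qaq$ and the projection $p$: this gives
\[
 \psi_{\alpha}\bigl(p(qaq)p\bigr) \;\le\; \psi_{\alpha}(qaq).
\]
Combining this with the identity $pap = p(qaq)p$ yields $\psi_{\alpha}(pap) \le \psi_{\alpha}(qaq)$, which is exactly the desired conclusion.

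There is essentially no obstacle: the content of the lemma is entirely captured by the identity $pap = p(qaq)p$ (a purely algebraic fact from $p \le q$) together with the monotonicity of $\psi_{\alpha}$ under compression by a projection, which has already been recorded earlier. If one wished to be fully self-contained, one could re-derive the compression inequality directly by noting that, for $b \in K(H)^+$ and any projection $p$, the mini-max principle gives $\lambda_i(pbp) \le \|p\|\,\lambda_i(b)\,\|p\| \le \lambda_i(b)$, whence $\lambda_i(pbp) \wedge \alpha(i) \le \lambda_i(b) \wedge \alpha(i)$ for every $i$, and taking the supremum in $i$ yields $\psi_{\alpha}(pbp) \le \psi_{\alpha}(b)$.
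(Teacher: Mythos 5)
Your proof is correct and takes essentially the same route as the paper's, which likewise uses $pq=qp=p$ to write $pap=pqaqp$ and then the mini-max eigenvalue inequality $\lambda_i(pqaqp)\le\|p\|^2\lambda_i(qaq)\le\lambda_i(qaq)$ applied termwise in the formula $\psi_{\alpha}(\cdot)=\vee_i(\lambda_i(\cdot)\wedge\alpha(i))$. One cosmetic slip: $qaq\neq(aq)^*(aq)$ in general; positivity of $qaq$ follows instead from $qaq=(a^{1/2}q)^*(a^{1/2}q)$.
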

\begin{proof}
Siince $\lambda_{n}(pap) = \lambda_{n}(pqaqp) \leq \|p\|^2\lambda_{n}(qaq) \leq \lambda_{n}(qaq)$, 
$$
\psi_{\alpha}(pap) = \lor_{i=1} ^{\infty} ( \lambda_i(pap) \land \alpha(i)) 
\leq \lor_{i=1} ^{\infty} ( \lambda_i(qaq) \land \alpha(i)) = \psi_{\alpha}(qaq).
$$
\end{proof}

We need the following observation: 


\begin{lemma}
\label{prop:observation}
Let $\alpha: {\mathbb N}_0 \rightarrow [0, \infty)$ be a monotone increasing function with $\alpha(0) = 0$ 
and $\alpha(1) > 0$. 
For  $a \in K(H)^+$, 
let $a = \sum_{i=1}^{\infty} \lambda_i(a) p_i$
be the spectral decomposition of $a$, where 
$$
\lambda(a) = (\lambda_1(a),\lambda_2(a),\dots, \lambda_n(a),\dots )
$$ 
is the list of the 
eigenvalues of $a$ in decreasing order with counting multiplicities. 
Then we have the following:
\begin{enumerate}
  \item[$(1)$] There exists a finite rank projection $p$ such that
$$ pap \ge \psi_\alpha(a)p  \text{ and } \psi_\alpha(a) \le \alpha(\dim p) . $$  
  \item[$(2)$] There exists a finite rank projection $q$ such that
$$ (I-q)a(I-q) \le \psi_\alpha(a)(I-q)  \text{ and } \psi_\alpha(a) \ge \alpha(\dim q) . $$
\end{enumerate}
\end{lemma}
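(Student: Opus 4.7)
The plan is to locate the unique crossover index at which the non-increasing sequence $(\lambda_i(a))_i$ is overtaken by the non-decreasing sequence $(\alpha(i))_i$, and then to construct $p$ and $q$ as spectral projections of $a$ onto the first few eigenvectors. Since $\alpha$ is non-decreasing with $\alpha(1) > 0$, while $\lambda_i(a) \to 0$, the index $i_0 := \min\{ i \ge 1 : \alpha(i) > \lambda_i(a) \}$ is well-defined and finite. For $i < i_0$ one has $\lambda_i(a) \wedge \alpha(i) = \alpha(i)$, which is non-decreasing in $i$, while for $i \ge i_0$ one has $\lambda_i(a) \wedge \alpha(i) = \lambda_i(a)$, which is non-increasing in $i$. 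With the convention $\alpha(0) = 0$, this yields
\[ \psi_\alpha(a) = \max\bigl(\alpha(i_0 - 1),\, \lambda_{i_0}(a)\bigr) =: v, \]
so the supremum defining $\psi_\alpha(a)$ is actually attained, either at $i_0 - 1$ or at $i_0$.

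For (1), set $n := i_0$ if $\lambda_{i_0}(a) \ge \alpha(i_0 - 1)$ and $n := i_0 - 1$ otherwise, and take $p := p_1 + \cdots + p_n$ (the zero projection when $n = 0$). Since the $p_i$ are pairwise orthogonal spectral projections of $a$,
\[ pap = \sum_{i=1}^{n} \lambda_i(a)\, p_i \ge \lambda_n(a)\, p \ge v\, p, \]
using $\lambda_{i_0}(a) = v$ in the first case and $\lambda_{i_0-1}(a) \ge \alpha(i_0-1) = v$ in the second (which holds because $i_0 - 1$ lies outside the defining set of $i_0$). The same case distinction gives $\alpha(\dim p) = \alpha(n) \ge v$, using $\alpha(i_0) > \lambda_{i_0}(a) = v$ in the first case.

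For (2), take $q := p_1 + \cdots + p_{i_0 - 1}$ (the zero projection when $i_0 = 1$), so that $\dim q = i_0 - 1$. Because $q$ commutes with $a$, a direct computation yields
\[ (I - q)\, a\, (I - q) = \sum_{i \ge i_0} \lambda_i(a)\, p_i \le \lambda_{i_0}(a) \sum_{i \ge i_0} p_i \le \lambda_{i_0}(a)(I - q) \le v\, (I - q), \]
and $\alpha(\dim q) = \alpha(i_0 - 1) \le v$ by the formula for $v$ above.

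The only mildly delicate point is the well-definedness of $i_0$ together with the careful handling of the boundary cases $i_0 = 1$ (when $q$ must be the zero projection) and $n = 0$ (when $v = 0$ and $a = 0$); both rely essentially on the standing hypothesis $\alpha(1) > 0$, but no deeper obstacle arises — once the crossover formula $v = \max(\alpha(i_0 - 1), \lambda_{i_0}(a))$ is in hand, the construction of $p$ and $q$ is essentially forced.
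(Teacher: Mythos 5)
Your proof is correct and follows essentially the same route as the paper: both identify the crossover index (your $i_0$ is the paper's $n$ with $\lambda_{n-1}(a)\ge\alpha(n-1)$, $\lambda_n(a)<\alpha(n)$), use the resulting formula $\psi_\alpha(a)=\lambda_{i_0}(a)\lor\alpha(i_0-1)$, and take $p$ and $q$ to be the corresponding initial sums of spectral projections, with the same case split according to which term attains the maximum. Your treatment merely packages the paper's cases (a), (b), (c) more uniformly via the convention $\alpha(0)=0$.
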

\begin{proof}
Suppose that  $a \in K(H)^+$ satisfies that $\lambda_1(a) \geq \alpha(1)$.  
Choose a unique natural number $n \geq 2$ such that $\lambda_{n-1}(a) \geq \alpha(n-1)$ and 
$\lambda_{n}(a) < \alpha(n)$. Then $\psi_{\alpha}(a) = \lambda_{n}(a) \lor\alpha(n-1)$. 

(a) Consider the case that $\lambda_{n}(a) \geq \alpha(n-1)$. 
Then  $\psi_{\alpha}(a) = \lambda_{n}(a)$.
\begin{center}
{\unitlength 0.1in%
\begin{picture}(26.8500,19.1500)(3.1000,-25.0500)%
%
\special{pn 13}%
\special{pa 310 2505}%
\special{pa 2995 2505}%
\special{fp}%
%
\special{pn 13}%
\special{pa 500 2300}%
\special{pa 520 2274}%
\special{pa 539 2248}%
\special{pa 559 2222}%
\special{pa 578 2195}%
\special{pa 618 2143}%
\special{pa 637 2117}%
\special{pa 657 2092}%
\special{pa 717 2014}%
\special{pa 757 1964}%
\special{pa 778 1938}%
\special{pa 818 1888}%
\special{pa 839 1864}%
\special{pa 860 1839}%
\special{pa 881 1815}%
\special{pa 902 1790}%
\special{pa 923 1766}%
\special{pa 945 1743}%
\special{pa 967 1719}%
\special{pa 988 1696}%
\special{pa 1010 1673}%
\special{pa 1033 1650}%
\special{pa 1055 1627}%
\special{pa 1124 1561}%
\special{pa 1147 1540}%
\special{pa 1195 1498}%
\special{pa 1219 1478}%
\special{pa 1269 1438}%
\special{pa 1319 1400}%
\special{pa 1397 1346}%
\special{pa 1478 1295}%
\special{pa 1506 1279}%
\special{pa 1533 1263}%
\special{pa 1561 1248}%
\special{pa 1590 1233}%
\special{pa 1618 1218}%
\special{pa 1676 1190}%
\special{pa 1705 1177}%
\special{pa 1735 1163}%
\special{pa 1765 1151}%
\special{pa 1795 1138}%
\special{pa 1855 1114}%
\special{pa 1885 1103}%
\special{pa 1947 1081}%
\special{pa 2009 1061}%
\special{pa 2040 1052}%
\special{pa 2071 1042}%
\special{pa 2103 1033}%
\special{pa 2134 1025}%
\special{pa 2166 1016}%
\special{pa 2198 1008}%
\special{pa 2230 1001}%
\special{pa 2261 993}%
\special{pa 2293 986}%
\special{pa 2326 980}%
\special{pa 2358 973}%
\special{pa 2422 961}%
\special{pa 2486 951}%
\special{pa 2519 946}%
\special{pa 2551 941}%
\special{pa 2583 937}%
\special{pa 2616 933}%
\special{pa 2648 929}%
\special{pa 2776 917}%
\special{pa 2872 911}%
\special{pa 2885 910}%
\special{fp}%
%
\special{pn 13}%
\special{pa 515 590}%
\special{pa 535 618}%
\special{pa 555 645}%
\special{pa 575 673}%
\special{pa 595 700}%
\special{pa 615 728}%
\special{pa 635 755}%
\special{pa 655 783}%
\special{pa 715 864}%
\special{pa 736 891}%
\special{pa 776 945}%
\special{pa 797 971}%
\special{pa 818 998}%
\special{pa 838 1024}%
\special{pa 859 1050}%
\special{pa 880 1075}%
\special{pa 901 1101}%
\special{pa 943 1151}%
\special{pa 965 1176}%
\special{pa 986 1201}%
\special{pa 1030 1249}%
\special{pa 1052 1272}%
\special{pa 1074 1296}%
\special{pa 1096 1319}%
\special{pa 1119 1341}%
\special{pa 1142 1364}%
\special{pa 1165 1386}%
\special{pa 1234 1449}%
\special{pa 1306 1509}%
\special{pa 1331 1528}%
\special{pa 1355 1546}%
\special{pa 1405 1582}%
\special{pa 1431 1599}%
\special{pa 1457 1615}%
\special{pa 1482 1631}%
\special{pa 1509 1647}%
\special{pa 1535 1662}%
\special{pa 1589 1690}%
\special{pa 1617 1703}%
\special{pa 1644 1715}%
\special{pa 1672 1727}%
\special{pa 1701 1739}%
\special{pa 1729 1750}%
\special{pa 1787 1770}%
\special{pa 1817 1780}%
\special{pa 1846 1788}%
\special{pa 1876 1797}%
\special{pa 1906 1805}%
\special{pa 1937 1812}%
\special{pa 1967 1819}%
\special{pa 1998 1826}%
\special{pa 2060 1838}%
\special{pa 2092 1844}%
\special{pa 2123 1849}%
\special{pa 2155 1854}%
\special{pa 2187 1858}%
\special{pa 2219 1863}%
\special{pa 2251 1867}%
\special{pa 2284 1870}%
\special{pa 2316 1874}%
\special{pa 2349 1877}%
\special{pa 2381 1880}%
\special{pa 2414 1882}%
\special{pa 2447 1885}%
\special{pa 2480 1887}%
\special{pa 2514 1889}%
\special{pa 2613 1895}%
\special{pa 2647 1896}%
\special{pa 2680 1898}%
\special{pa 2748 1900}%
\special{pa 2781 1901}%
\special{pa 2815 1903}%
\special{pa 2849 1904}%
\special{pa 2882 1905}%
\special{pa 2885 1905}%
\special{fp}%
%
\special{pn 13}%
\special{pa 770 930}%
\special{pa 770 2500}%
\special{dt 0.045}%
%
\special{pn 13}%
\special{pa 1600 1215}%
\special{pa 1600 2490}%
\special{dt 0.045}%
%
\special{pn 0}%
\special{sh 1.000}%
\special{pa 730 1905}%
\special{pa 795 1905}%
\special{pa 795 1970}%
\special{pa 730 1970}%
\special{pa 730 1905}%
\special{ip}%
\special{pn 13}%
\special{pa 730 1905}%
\special{pa 795 1905}%
\special{pa 795 1970}%
\special{pa 730 1970}%
\special{pa 730 1905}%
\special{fp}%
%
\special{pn 0}%
\special{sh 1.000}%
\special{pa 740 915}%
\special{pa 805 915}%
\special{pa 805 980}%
\special{pa 740 980}%
\special{pa 740 915}%
\special{ip}%
\special{pn 13}%
\special{pa 740 915}%
\special{pa 805 915}%
\special{pa 805 980}%
\special{pa 740 980}%
\special{pa 740 915}%
\special{pa 805 915}%
\special{fp}%
%
\special{pn 0}%
\special{sh 1.000}%
\special{pa 1570 1200}%
\special{pa 1635 1200}%
\special{pa 1635 1265}%
\special{pa 1570 1265}%
\special{pa 1570 1200}%
\special{ip}%
\special{pn 13}%
\special{pa 1570 1200}%
\special{pa 1635 1200}%
\special{pa 1635 1265}%
\special{pa 1570 1265}%
\special{pa 1570 1200}%
\special{pa 1635 1200}%
\special{fp}%
\put(7.3500,-26.2500){\makebox(0,0)[lb]{$n-1$}}%
\put(15.3500,-26.2500){\makebox(0,0)[lb]{$n$}}%
\put(8.7500,-19.7000){\makebox(0,0)[lb]{$\alpha(n-1)$}}%
\put(8.9000,-10.0000){\makebox(0,0)[lb]{$\lambda_{n-1}(a)$}}%
\put(17.9000,-13.1500){\makebox(0,0)[lb]{$\alpha(n)$}}%
\put(18.0500,-17.1500){\makebox(0,0)[lb]{$\lambda_n(a)=\psi_{\alpha}(a)$}}%
%
\special{pn 0}%
\special{sh 1.000}%
\special{pa 1560 1655}%
\special{pa 1625 1655}%
\special{pa 1625 1720}%
\special{pa 1560 1720}%
\special{pa 1560 1655}%
\special{ip}%
\special{pn 13}%
\special{pa 1560 1655}%
\special{pa 1625 1655}%
\special{pa 1625 1720}%
\special{pa 1560 1720}%
\special{pa 1560 1655}%
\special{pa 1625 1655}%
\special{fp}%
\end{picture}}%

\end{center} 
Put $p = \sum_{k=1}^n p_k$. We have that 
$$\psi_{\alpha}(a) = \psi_{\alpha}(pap),\ \  
pap \geq \lambda_{n}(a) p = \psi_{\alpha}(a)p  \ \text{ and }
\psi_{\alpha}(a) < \alpha (\dim p). 
$$
For any projection $q$ with $p \leq q$ we have that 
$$
\psi_{\alpha}(a) =\psi_{\alpha}(pap) = \psi_{\alpha}(qaq). 
$$ 
Put $q_0 = \sum_{k=1}^{n-1} p_k$. Then 
$$
\alpha (\dim q_0) = \alpha(n-1) \leq \lambda_{n}(a) = \psi_{\alpha}(a)     \text{ and }
$$
$$
(I-q_0)a (I-q_0) \leq \lambda_{n}(a)(I-q_0) = \psi_{\alpha}(a)(I-q_0). 
$$

(b) Consider the case that $\lambda_{n}(a) < \alpha(n-1)$. 
Then  $\psi_{\alpha}(a) = \alpha(n-1)$.
\begin{center}
{\unitlength 0.1in%
\begin{picture}(28.2000,21.0000)(2.9000,-25.0500)%
%
\special{pn 13}%
\special{pa 290 2505}%
\special{pa 3110 2500}%
\special{fp}%
%
\special{pn 13}%
\special{pa 410 2100}%
\special{pa 427 2072}%
\special{pa 444 2045}%
\special{pa 495 1961}%
\special{pa 512 1934}%
\special{pa 529 1906}%
\special{pa 547 1879}%
\special{pa 564 1852}%
\special{pa 581 1824}%
\special{pa 617 1770}%
\special{pa 635 1744}%
\special{pa 671 1690}%
\special{pa 689 1664}%
\special{pa 746 1586}%
\special{pa 765 1561}%
\special{pa 785 1536}%
\special{pa 804 1511}%
\special{pa 824 1486}%
\special{pa 845 1461}%
\special{pa 865 1437}%
\special{pa 886 1413}%
\special{pa 907 1390}%
\special{pa 929 1366}%
\special{pa 951 1344}%
\special{pa 973 1321}%
\special{pa 1042 1255}%
\special{pa 1066 1234}%
\special{pa 1090 1214}%
\special{pa 1114 1193}%
\special{pa 1139 1173}%
\special{pa 1164 1154}%
\special{pa 1190 1134}%
\special{pa 1216 1116}%
\special{pa 1242 1097}%
\special{pa 1268 1079}%
\special{pa 1295 1061}%
\special{pa 1349 1027}%
\special{pa 1405 993}%
\special{pa 1433 977}%
\special{pa 1462 962}%
\special{pa 1490 946}%
\special{pa 1519 931}%
\special{pa 1548 917}%
\special{pa 1578 903}%
\special{pa 1607 889}%
\special{pa 1637 875}%
\special{pa 1697 849}%
\special{pa 1728 836}%
\special{pa 1758 824}%
\special{pa 1789 812}%
\special{pa 1820 801}%
\special{pa 1851 789}%
\special{pa 1882 778}%
\special{pa 1913 768}%
\special{pa 1945 758}%
\special{pa 1976 748}%
\special{pa 2008 738}%
\special{pa 2104 711}%
\special{pa 2200 687}%
\special{pa 2233 680}%
\special{pa 2297 666}%
\special{pa 2330 660}%
\special{pa 2362 653}%
\special{pa 2395 648}%
\special{pa 2427 642}%
\special{pa 2460 637}%
\special{pa 2492 632}%
\special{pa 2525 627}%
\special{pa 2589 619}%
\special{pa 2622 615}%
\special{pa 2654 612}%
\special{pa 2687 609}%
\special{pa 2751 603}%
\special{pa 2879 595}%
\special{pa 2885 595}%
\special{fp}%
%
\special{pn 13}%
\special{pa 415 405}%
\special{pa 435 473}%
\special{pa 445 506}%
\special{pa 455 540}%
\special{pa 466 574}%
\special{pa 476 607}%
\special{pa 486 641}%
\special{pa 496 674}%
\special{pa 507 708}%
\special{pa 517 741}%
\special{pa 539 807}%
\special{pa 549 840}%
\special{pa 571 906}%
\special{pa 582 938}%
\special{pa 594 970}%
\special{pa 605 1002}%
\special{pa 629 1066}%
\special{pa 665 1159}%
\special{pa 678 1190}%
\special{pa 730 1310}%
\special{pa 758 1368}%
\special{pa 773 1396}%
\special{pa 787 1425}%
\special{pa 802 1452}%
\special{pa 817 1480}%
\special{pa 833 1507}%
\special{pa 848 1533}%
\special{pa 865 1560}%
\special{pa 881 1585}%
\special{pa 898 1611}%
\special{pa 915 1636}%
\special{pa 933 1660}%
\special{pa 950 1684}%
\special{pa 969 1708}%
\special{pa 987 1731}%
\special{pa 1006 1753}%
\special{pa 1026 1776}%
\special{pa 1066 1818}%
\special{pa 1087 1839}%
\special{pa 1108 1858}%
\special{pa 1130 1878}%
\special{pa 1152 1896}%
\special{pa 1174 1915}%
\special{pa 1197 1932}%
\special{pa 1245 1966}%
\special{pa 1269 1982}%
\special{pa 1319 2012}%
\special{pa 1371 2040}%
\special{pa 1397 2053}%
\special{pa 1451 2079}%
\special{pa 1479 2091}%
\special{pa 1535 2113}%
\special{pa 1564 2124}%
\special{pa 1622 2144}%
\special{pa 1712 2171}%
\special{pa 1742 2179}%
\special{pa 1773 2187}%
\special{pa 1804 2194}%
\special{pa 1836 2201}%
\special{pa 1867 2208}%
\special{pa 1899 2215}%
\special{pa 1963 2227}%
\special{pa 1996 2233}%
\special{pa 2128 2253}%
\special{pa 2298 2273}%
\special{pa 2332 2276}%
\special{pa 2367 2279}%
\special{pa 2401 2282}%
\special{pa 2506 2291}%
\special{pa 2541 2293}%
\special{pa 2576 2296}%
\special{pa 2611 2298}%
\special{pa 2647 2301}%
\special{pa 2717 2305}%
\special{pa 2753 2307}%
\special{pa 2788 2309}%
\special{pa 2824 2311}%
\special{pa 2859 2313}%
\special{pa 2895 2315}%
\special{fp}%
%
\special{pn 13}%
\special{pa 705 1240}%
\special{pa 705 2495}%
\special{dt 0.045}%
%
\special{pn 13}%
\special{pa 1895 765}%
\special{pa 1895 2495}%
\special{dt 0.045}%
%
\special{pn 0}%
\special{sh 1.000}%
\special{pa 670 1210}%
\special{pa 735 1210}%
\special{pa 735 1275}%
\special{pa 670 1275}%
\special{pa 670 1210}%
\special{ip}%
\special{pn 13}%
\special{pa 670 1210}%
\special{pa 735 1210}%
\special{pa 735 1275}%
\special{pa 670 1275}%
\special{pa 670 1210}%
\special{dt 0.045}%
%
\special{pn 0}%
\special{sh 1.000}%
\special{pa 670 1620}%
\special{pa 735 1620}%
\special{pa 735 1685}%
\special{pa 670 1685}%
\special{pa 670 1620}%
\special{ip}%
\special{pn 13}%
\special{pa 670 1620}%
\special{pa 735 1620}%
\special{pa 735 1685}%
\special{pa 670 1685}%
\special{pa 670 1620}%
\special{dt 0.045}%
%
\special{pn 0}%
\special{sh 1.000}%
\special{pa 1860 740}%
\special{pa 1925 740}%
\special{pa 1925 805}%
\special{pa 1860 805}%
\special{pa 1860 740}%
\special{ip}%
\special{pn 13}%
\special{pa 1860 740}%
\special{pa 1925 740}%
\special{pa 1925 805}%
\special{pa 1860 805}%
\special{pa 1860 740}%
\special{dt 0.045}%
%
\special{pn 0}%
\special{sh 1.000}%
\special{pa 1865 2165}%
\special{pa 1930 2165}%
\special{pa 1930 2230}%
\special{pa 1865 2230}%
\special{pa 1865 2165}%
\special{ip}%
\special{pn 13}%
\special{pa 1865 2165}%
\special{pa 1930 2165}%
\special{pa 1930 2230}%
\special{pa 1865 2230}%
\special{pa 1865 2165}%
\special{dt 0.045}%
\put(6.2000,-26.5000){\makebox(0,0)[lb]{$n-1$}}%
\put(18.3000,-26.5000){\makebox(0,0)[lb]{$n$}}%
\put(7.1500,-11.5500){\makebox(0,0)[lb]{$\lambda_{n-1}(a)$}}%
\put(20.2000,-8.9000){\makebox(0,0)[lb]{$\alpha(n)$}}%
\put(20.0000,-21.9500){\makebox(0,0)[lb]{$\lambda_n(a)$}}%
\put(8.1000,-17.2000){\makebox(0,0)[lb]{$\alpha(n-1)=\psi_{\alpha}(a)$}}%
\end{picture}}%

\end{center} 
Put $p = \sum_{k=1}^{n-1} p_k$. We have that 
$$\psi_{\alpha}(a) = \psi_{\alpha}(pap),\ \  
pap \geq \lambda_{n-1}(a) p  \geq \alpha(n-1)p = \psi_{\alpha}(a)p   \text{ and }
$$
$$
\psi_{\alpha}(a) = \alpha (\dim p). 
$$
For any projection $q$ with $p \leq q$ we have that 
$$
\psi_{\alpha}(a) =\psi_{\alpha}(pap) = \psi_{\alpha}(qaq). 
$$ 
Put $q_0 = \sum_{k=1}^{n-1} p_k$. Then 
$$
\alpha (\dim q_0) = \alpha(n-1)  = \psi_{\alpha}(a)    \text{ and }
$$
$$
(I-q_0)a (I-q_0) \leq \lambda_{n}(a)(I-q_0) \leq \alpha(n-1)(I-q_0) = \psi_{\alpha}(a)(I-q_0). 
$$

(c)  Suppose that  $a \in K(H)^+$ satisfies that $\lambda_1(a) < \alpha(1)$. 
This is the case that $n = 1$ and $\lambda_1(a) \geq \alpha(1-1) = 0$. 
Then  $\psi_{\alpha}(a) = \lambda_{1}(a)$.
\begin{center}
{\unitlength 0.1in%
\begin{picture}(25.8500,16.4000)(2.1500,-24.4000)%
%
\special{pn 13}%
\special{pa 215 2400}%
\special{pa 2800 2405}%
\special{fp}%
%
\special{pn 13}%
\special{pa 515 2395}%
\special{pa 532 2367}%
\special{pa 549 2338}%
\special{pa 565 2310}%
\special{pa 582 2281}%
\special{pa 684 2113}%
\special{pa 702 2085}%
\special{pa 719 2057}%
\special{pa 737 2030}%
\special{pa 754 2002}%
\special{pa 808 1921}%
\special{pa 826 1895}%
\special{pa 845 1868}%
\special{pa 863 1842}%
\special{pa 882 1816}%
\special{pa 901 1791}%
\special{pa 920 1765}%
\special{pa 960 1715}%
\special{pa 1020 1643}%
\special{pa 1041 1619}%
\special{pa 1083 1573}%
\special{pa 1105 1551}%
\special{pa 1127 1528}%
\special{pa 1149 1507}%
\special{pa 1172 1485}%
\special{pa 1195 1465}%
\special{pa 1218 1444}%
\special{pa 1266 1404}%
\special{pa 1290 1385}%
\special{pa 1315 1366}%
\special{pa 1365 1330}%
\special{pa 1391 1312}%
\special{pa 1443 1278}%
\special{pa 1469 1262}%
\special{pa 1496 1245}%
\special{pa 1523 1229}%
\special{pa 1550 1214}%
\special{pa 1634 1169}%
\special{pa 1662 1155}%
\special{pa 1691 1140}%
\special{pa 1719 1127}%
\special{pa 1748 1113}%
\special{pa 1777 1100}%
\special{pa 1807 1087}%
\special{pa 1836 1074}%
\special{pa 1866 1061}%
\special{pa 1896 1049}%
\special{pa 1926 1036}%
\special{pa 1956 1024}%
\special{pa 1987 1013}%
\special{pa 2017 1001}%
\special{pa 2048 989}%
\special{pa 2078 978}%
\special{pa 2140 956}%
\special{pa 2172 945}%
\special{pa 2203 934}%
\special{pa 2234 924}%
\special{pa 2266 913}%
\special{pa 2297 903}%
\special{pa 2329 892}%
\special{pa 2360 882}%
\special{pa 2424 862}%
\special{pa 2456 851}%
\special{pa 2487 841}%
\special{pa 2583 811}%
\special{pa 2615 802}%
\special{pa 2620 800}%
\special{fp}%
%
\special{pn 13}%
\special{pa 1210 1785}%
\special{pa 1235 1805}%
\special{pa 1261 1825}%
\special{pa 1286 1845}%
\special{pa 1312 1865}%
\special{pa 1337 1885}%
\special{pa 1363 1904}%
\special{pa 1389 1924}%
\special{pa 1415 1942}%
\special{pa 1441 1961}%
\special{pa 1468 1979}%
\special{pa 1494 1997}%
\special{pa 1548 2031}%
\special{pa 1575 2047}%
\special{pa 1603 2063}%
\special{pa 1631 2078}%
\special{pa 1659 2092}%
\special{pa 1688 2106}%
\special{pa 1717 2119}%
\special{pa 1746 2131}%
\special{pa 1806 2153}%
\special{pa 1836 2163}%
\special{pa 1866 2172}%
\special{pa 1928 2190}%
\special{pa 1959 2197}%
\special{pa 1991 2204}%
\special{pa 2022 2211}%
\special{pa 2086 2223}%
\special{pa 2150 2233}%
\special{pa 2182 2237}%
\special{pa 2215 2241}%
\special{pa 2247 2244}%
\special{pa 2280 2247}%
\special{pa 2312 2250}%
\special{pa 2345 2252}%
\special{pa 2377 2254}%
\special{pa 2410 2255}%
\special{pa 2442 2257}%
\special{pa 2475 2258}%
\special{pa 2507 2259}%
\special{pa 2540 2259}%
\special{pa 2572 2260}%
\special{pa 2655 2260}%
\special{fp}%
%
\special{pn 13}%
\special{pa 1260 2395}%
\special{pa 1210 2405}%
\special{dt 0.045}%
\special{pa 1205 1460}%
\special{pa 1225 1475}%
\special{dt 0.045}%
%
\special{pn 13}%
\special{pa 1210 2400}%
\special{pa 1210 1455}%
\special{dt 0.045}%
\put(5.0000,-25.8000){\makebox(0,0)[lb]{$0$}}%
\put(11.4500,-25.8000){\makebox(0,0)[lb]{$1$}}%
%
\special{pn 0}%
\special{sh 1.000}%
\special{pa 1180 1420}%
\special{pa 1245 1420}%
\special{pa 1245 1485}%
\special{pa 1180 1485}%
\special{pa 1180 1420}
\special{ip}%
\special{pn 13}%
\special{pa 1180 1420}%
\special{pa 1245 1420}%
\special{pa 1245 1485}%
\special{pa 1180 1485}%
\special{pa 1180 1420}%
\special{dt 0.045}%
%
\special{pn 0}%
\special{sh 1.000}%
\special{pa 1180 1760}%
\special{pa 1245 1760}%
\special{pa 1245 1825}%
\special{pa 1180 1825}%
\special{pa 1180 1760}%
\special{ip}%
\special{pn 13}%
\special{pa 1180 1760}%
\special{pa 1245 1760}%
\special{pa 1245 1825}%
\special{pa 1180 1825}%
\special{pa 1180 1760}%
\special{dt 0.045}%
%
\special{pn 0}%
\special{sh 1.000}%
\special{pa 490 2365}%
\special{pa 555 2365}%
\special{pa 555 2430}%
\special{pa 490 2430}%
\special{pa 490 2365}%
\special{ip}%
\special{pn 13}%
\special{pa 490 2365}%
\special{pa 555 2365}%
\special{pa 555 2430}%
\special{pa 490 2430}%
\special{pa 490 2365}%
\special{dt 0.045}%
\put(3.6500,-22.9500){\makebox(0,0)[lb]{$\alpha(0)$}}%
\put(11.0500,-13.7000){\makebox(0,0)[lb]{$\alpha(1)$}}%
\put(13.6500,-18.0500){\makebox(0,0)[lb]{$\lambda_1(a)=\psi_{\alpha}(a)$}}%
\end{picture}}%

\end{center} 
Put $p = p_1$. We have that $\psi_{\alpha}(a) = \psi_{\alpha}(pap)$,  
$$ pap  = \lambda_{1}(a)p =  \psi_{\alpha}(a)p  \ \text{ and }
\psi_{\alpha}(a) < \alpha(1) = \alpha (\dim p). 
$$
For any projection $q$ with $p \leq q$ we have that 
$$
\psi_{\alpha}(a) =\psi_{\alpha}(pap) = \psi_{\alpha}(qaq). 
$$ 
Put $q_0 = 0$. Then 
\begin{gather*}
\alpha (\dim q_0) = \alpha(0) \leq \lambda_{1}(a) = \psi_{\alpha}(a)    \ \text{ and }  \\
(I-q_0)a (I-q_0) \leq \lambda_{1}(a)(I-q_0) = \psi_{\alpha}(a)(I-q_0). 
\end{gather*}
\end{proof}


The following Theorem is an important expression of the value of the non-linear trace of Sugeno type 
and is a key to prove the triangle inequality. 

\begin{theorem}
\label{prop:max}
Let $\alpha: {\mathbb N}_0 \rightarrow [0, \infty)$ be a monotone increasing function with $\alpha(0) = 0$ 
and $\alpha(1) > 0$. 
For  $a \in K(H)^+$, we have that
\begin{align*}
\psi_{\alpha}(a) = \max \{ \lambda \geq 0 \ | \ & \exists p \text{ a finite rank projection s.t. } \\
  & \qquad \qquad \lambda \leq \alpha (\dim p), \ pap \geq \lambda p \}. 
\end{align*}
\end{theorem}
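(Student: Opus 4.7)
The plan is to prove the two inequalities separately, and to observe that the value $\psi_\alpha(a)$ itself is attained, which turns the supremum into a maximum. Denote the right-hand side (as a supremum) by $M(a)$, i.e.
\[
M(a) = \sup\{\lambda \ge 0 \mid \exists\, p \text{ finite rank projection with } \lambda \le \alpha(\dim p) \text{ and } pap \ge \lambda p\}.
\]

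First I would show $M(a) \ge \psi_\alpha(a)$ and that the sup is attained. This is essentially already done in Lemma~\ref{prop:observation}(1): it produces a finite rank projection $p$ with $pap \ge \psi_\alpha(a) p$ and $\psi_\alpha(a) \le \alpha(\dim p)$. Thus $\lambda := \psi_\alpha(a)$ is an admissible value in the defining set, so $M(a) \ge \psi_\alpha(a)$ and this value is attained by an actual projection.

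Next I would prove the reverse inequality $M(a) \le \psi_\alpha(a)$. Let $p$ be any finite rank projection with $n := \dim p$ such that $pap \ge \lambda p$ and $\lambda \le \alpha(n)$. Since $pap$ vanishes on $(\mathrm{range}\,p)^\perp$ and is bounded below by $\lambda$ on $\mathrm{range}\,p$ (which has dimension $n$), its $n$-th eigenvalue satisfies $\lambda_n(pap) \ge \lambda$. By the Courant--Fischer mini-max principle applied to the compression $pap \le \|p\| \cdot a \cdot \|p\| = a$ (in the sense that eigenvalues of compressions are dominated by those of $a$), we have $\lambda_n(pap) \le \lambda_n(a)$. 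Combining,
\[
\lambda \le \lambda_n(a) \wedge \alpha(n) \le \bigvee_{i=1}^{\infty}\bigl(\lambda_i(a) \wedge \alpha(i)\bigr) = \psi_\alpha(a).
\]
Taking the supremum over admissible $(\lambda, p)$ gives $M(a) \le \psi_\alpha(a)$.

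Putting these together yields $M(a) = \psi_\alpha(a)$ and the supremum is attained by the projection produced in Lemma~\ref{prop:observation}(1), so ``sup'' can be replaced by ``max'', which is exactly the claim. The main technical point is the mini-max comparison $\lambda_n(pap) \le \lambda_n(a)$ together with the observation that $pap \ge \lambda p$ forces $\lambda_n(pap) \ge \lambda$; both are classical and should be recorded briefly. Everything else is bookkeeping from Lemma~\ref{prop:observation}.
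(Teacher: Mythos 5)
Your proof is correct, and its first half coincides with the paper's: both invoke Lemma~\ref{prop:observation}(1) to produce a finite rank projection realizing $\lambda=\psi_{\alpha}(a)$, which simultaneously gives one inequality and shows the supremum is attained. For the reverse inequality you take a mildly different route. The paper argues at the level of the trace itself: $\psi_{\alpha}(a)\ge\psi_{\alpha}(pap)\ge\psi_{\alpha}(\lambda p)=\lambda\wedge\alpha(\dim p)=\lambda$, using the compression inequality $\psi_{\alpha}(pap)\le\psi_{\alpha}(a)$, monotonicity of $\psi_{\alpha}$, and the earlier lemma computing $\psi_{\alpha}(cp)=c\wedge\alpha(\dim p)$ for a finite rank projection $p$. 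You instead work with a single eigenvalue: $pap\ge\lambda p$ forces $\lambda_{n}(pap)\ge\lambda$ because the compression of $pap$ to the $n$-dimensional range of $p$ is bounded below by $\lambda$, and Courant--Fischer gives $\lambda_{n}(pap)\le\lambda_{n}(a)$, whence $\lambda\le\lambda_{n}(a)\wedge\alpha(n)\le\psi_{\alpha}(a)$. Both arguments rest on the same mini-max fact $\lambda_{i}(pap)\le\lambda_{i}(a)$ that the paper records right after defining $\psi_{\alpha}$; yours bypasses the lemma on $\psi_{\alpha}(cp)$ and is marginally more self-contained, while the paper's reads the bound off properties of $\psi_{\alpha}$ already established. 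One caution for the write-up: the phrase ``$pap\le\|p\|\cdot a\cdot\|p\|=a$'' is false as an operator inequality ($pap\le a$ fails in general); what is true, and what you actually use as your parenthetical indicates, is only the eigenvalue dominance $\lambda_{i}(pap)\le\lambda_{i}(a)$, so state it in that form.
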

\begin{proof}
Choose $p$ in the Lemma \ref{prop:observation}(1), we have that 
\begin{align*}
\psi_{\alpha}(a) \leq \max \{ \lambda \geq 0 \ | \ & \exists p \text{ a finite rank projection s.t. } \\ 
       & \qquad \qquad \lambda \leq \alpha (\dim p), \ pap \geq \lambda p \}. 
\end{align*}

To show the converse inequality, take any finite rank projection $p$ such that 
$$
\lambda \leq \alpha (\dim p)  \text{ and } \ pap \geq \lambda p.
$$
Then we have that 
$$
\psi_{\alpha}(a) \geq \psi_{\alpha}(pap) \geq \psi_{\alpha}(\lambda p) . 
$$
Since $\lambda \leq \alpha (\dim p)$, 
$$
\psi_{\alpha}(\lambda p)  = \lambda \land \alpha (\dim p) = \lambda.
$$
This implies the conclusion. 

\end{proof}

We shall show that  if $\alpha$ is concave, then the triangle inequality holds. 

\begin{theorem} 
Let $\psi = \psi_{\alpha}$ be  a non-linear trace of Sugeno type associated with  
a monotone increasing function $\alpha: {\mathbb N}_0  \rightarrow [0, \infty)$  with $\alpha(0) = 0$ and 
$\alpha(1) > 0$.   
Define $||a||_{\alpha}:= \psi_{\alpha}(|a|)$ for $a \in K(H)$. 
Assume that $\alpha$ is concave in the sense that 
$\frac{\alpha(i +1) + \alpha(i - 1)}{2} \leq \alpha(i), \;  (i =1,2,3, $ $\ldots)$.
Then 
$|| \ ||_{\alpha}$ satisfies the triangle inequality: for any $b,c \in K(H) $, 
$$
||b + c||_{\alpha} \leq ||b ||_{\alpha} + || c||_{\alpha}.
$$
\end{theorem}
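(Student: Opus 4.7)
The plan is to reduce the problem to finding, for the non-linear Sugeno trace, a single finite rank projection $q$ that simultaneously controls both $\alpha(\dim q)$ and the behaviour of $b+c$ off $qH$. Set $\mu:=\|b\|_\alpha=\psi_\alpha(|b|)$ and $\nu:=\|c\|_\alpha=\psi_\alpha(|c|)$ (assume both finite; the inequality is trivial otherwise). I aim to construct a finite rank projection $q$ such that $\alpha(\dim q)\le \mu+\nu$ and $\|(b+c)\xi\|\le (\mu+\nu)\|\xi\|$ for every $\xi\in(I-q)H$, and then to read off $\psi_\alpha(|b+c|)\le \mu+\nu$ via the Courant--Fischer mini-max characterisation of $\lambda_i(|b+c|)$.

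To build $q$, I would apply (the construction inside the proof of) Lemma~\ref{prop:observation}(2) to $|b|$ and $|c|$, which produces finite rank \emph{spectral} projections $q_b$ of $|b|$ and $q_c$ of $|c|$ with
\[
 |b|(I-q_b)\le \mu(I-q_b),\ \ \alpha(\dim q_b)\le\mu,\qquad
 |c|(I-q_c)\le \nu(I-q_c),\ \ \alpha(\dim q_c)\le\nu.
\]
Because $q_b$ commutes with $|b|$, one has $\|b(I-q_b)\|^2=\|(I-q_b)|b|^2(I-q_b)\|\le \mu^2$, so $\|b(I-q_b)\|\le\mu$, and similarly $\|c(I-q_c)\|\le\nu$. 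Now set $q:=q_b\vee q_c$. Then $\dim q\le \dim q_b+\dim q_c$, and by the subadditivity $\alpha(m+n)\le\alpha(m)+\alpha(n)$ for concave $\alpha$ (the Remark after Proposition on concavity), $\alpha(\dim q)\le\mu+\nu$. For any $\xi\in(I-q)H$ we have $\xi\in(I-q_b)H\cap(I-q_c)H$, so $\|(b+c)\xi\|\le\|b\xi\|+\|c\xi\|\le(\mu+\nu)\|\xi\|$.

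To finish, let $b+c=v|b+c|$ be the polar decomposition; then $\||b+c|\xi\|=\|(b+c)\xi\|$, and Cauchy--Schwarz gives
\[
 \langle |b+c|\xi,\xi\rangle \le \||b+c|\xi\|\cdot\|\xi\|=\|(b+c)\xi\|\cdot\|\xi\|\le(\mu+\nu)\|\xi\|^2
\]
for all $\xi\in(I-q)H$. By Courant--Fischer, for each $i>\dim q$ I take any $(i-1)$-dimensional subspace $N\supseteq qH$, so every unit $\xi\perp N$ lies in $(I-q)H$, giving $\lambda_i(|b+c|)\le\mu+\nu$. For $i\le\dim q$, $\alpha(i)\le\alpha(\dim q)\le\mu+\nu$. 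Hence $\lambda_i(|b+c|)\wedge\alpha(i)\le\mu+\nu$ for all $i$, and taking the supremum,
\[
 \|b+c\|_\alpha=\psi_\alpha(|b+c|)=\sup_{i}\bigl(\lambda_i(|b+c|)\wedge\alpha(i)\bigr)\le \mu+\nu.
\]

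The main obstacle is that $|b+c|\not\le|b|+|c|$ in general, so naive monotonicity of $\psi_\alpha$ cannot conclude the proof; this is bypassed by bounding the Hilbert-space norm $\|(b+c)\xi\|$ instead and returning to $\langle|b+c|\xi,\xi\rangle$ through the polar decomposition identity and Cauchy--Schwarz. A parallel subtle point is that the compressed inequality $(I-q_b)|b|(I-q_b)\le\mu(I-q_b)$ does not generally imply $\|b(I-q_b)\|\le\mu$; this is why it is essential to take $q_b,q_c$ to be \emph{spectral} projections (as produced by the proof of Lemma~\ref{prop:observation}), so that they commute with $|b|$ and $|c|$ respectively.
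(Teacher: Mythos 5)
Your proof is correct, and it takes a genuinely different route from the paper's. The paper first reduces to positive $b,c$: it compresses $a=b+c$ by the projection $p$ from Lemma \ref{prop:observation}(1), applies Lemma \ref{prop:observation}(2) to $pbp$, uses subadditivity of the concave $\alpha$, and closes the argument with the max-characterization of $\psi_\alpha$ (Theorem \ref{prop:max}); the non-positive case is then handled by the Akemann--Anderson--Pedersen theorem $|b+c|\le u|b|u^*+v|c|v^*$. You instead treat general $b,c$ at once: applying the construction in the proof of Lemma \ref{prop:observation}(2) to $|b|$ and $|c|$ separately, taking the join $q=q_b\vee q_c$, using $\alpha(\dim q)\le\alpha(\dim q_b)+\alpha(\dim q_c)\le\mu+\nu$, and bounding $\lambda_i(|b+c|)$ for $i>\dim q$ by Courant--Fischer via the operator-norm estimate $\|(b+c)\xi\|\le(\mu+\nu)\|\xi\|$ on $(I-q)H$. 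Your two flagged subtleties are handled correctly: $|b+c|\not\le|b|+|c|$ is bypassed through $\langle|b+c|\xi,\xi\rangle\le\||b+c|\xi\|\,\|\xi\|=\|(b+c)\xi\|\,\|\xi\|$, and the passage from the compression inequality to $\|b(I-q_b)\|\le\mu$ is legitimate precisely because $q_b$ is a spectral projection of $|b|$ (as the lemma's proof indeed produces), so that $|b|$ commutes with $q_b$. What each approach buys: the paper's argument stays entirely within the $\psi_\alpha$-calculus and its max formula but needs the nontrivial external AAP/Thompson theorem for the non-positive case; yours is more elementary and self-contained (only the min-max principle, subadditivity of $\alpha$, and the spectral refinement of Lemma \ref{prop:observation}(2)), avoiding both Theorem \ref{prop:max} and the AAP theorem, at the mild cost of invoking the lemma's construction rather than its bare statement. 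Minor remark: the finiteness caveat is unnecessary, since $\psi_\alpha(|b|)\le\lambda_1(|b|)=\|b\|<\infty$ for every $b\in K(H)$.
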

\begin{proof}
First we assume that $b,c \in K(H)$ are positive.  Put $a = b + c$.  
Let $a = \sum_{i=1}^{\infty} \lambda_i(a) p_i$
be the spectral decomposition of $a$, where 
$$
\lambda(a) = (\lambda_1(a),\lambda_2(a),\dots, \lambda_n(a),\dots )
$$ 
is the list of the 
eigenvalues of $a$ in decreasing order with counting multiplicities. 
Let $\lambda := \psi_{\alpha}(a)$.  
By Lemma \ref{prop:observation}(1), there exist a natural number $n$ and a 
projection $p = \sum_{i=1}^{n} p_i$ such that 
$$
\dim p = n, \ \ pap \geq \lambda p, \ \ \lambda  \leq \alpha(n) = \alpha(\dim p) \text{ and } 
\psi_{\alpha}(a) = \psi_{\alpha}(pap)
$$
To get $||b + c||_{\alpha} \leq ||b ||_{\alpha} + || c||_{\alpha}$, it is enough to show that 
$$
\psi_{\alpha}(pbp + pcp) \leq \psi_{\alpha}(pbp)  + \psi_{\alpha}(pcp). 
$$
In fact, if this is shown, then 
$$
||b + c||_{\alpha} =  \psi_{\alpha}(a) = \psi_{\alpha}(pap) \leq \psi_{\alpha}(pbp)  + \psi_{\alpha}(pcp) 
\leq \psi_{\alpha}(b) + \psi_{\alpha}(c) 
$$

Let $\mu := \psi_{\alpha}(pbp)$. Apply Lemma \ref{prop:observation}(2) to $pbp$. Then there exists a projection 
$q_0 \leq p$ such that
$$
\alpha (\dim q_0) \leq  \psi_{\alpha}(pbp)    \ \text{ and }
$$
$$
(I-q_0)pbp (I-q_0) \leq  \psi_{\alpha}(pbp)(I-q_0). 
$$
Hence $(p-q_0)pbp (p-q_0) \leq  \psi_{\alpha}(pbp)(p-q_0) = \mu (p-q_0) . $  Since $\alpha$ is concave, 
$$
\alpha (\dim p)  =  \alpha (\dim (p-q_0) + \dim q_0) \leq  \alpha (\dim (p -q_0) ) +  \alpha (\dim q_0), 
$$
we have that 
$$
\lambda - \mu =  \psi_{\alpha}(a) -  \psi_{\alpha}(pbp) \leq  \alpha(\dim p) - \alpha (\dim q_0) 
\leq \alpha (\dim (p -q_0) ). 
$$
Then 
\begin{align*}
&  
(p-q_0)pcp (p-q_0) = (p-q_0)pap (p-q_0) - (p-q_0)pbp (p-q_0) \\
& \geq \lambda (p-q_0) - \mu(p-q_0) = (\lambda - \mu)(p-q_0)
\end{align*}

By Theorem \ref{prop:max},  we have that $\lambda - \mu \leq  \psi_{\alpha}(pcp)$. 
This means that 
$$
\psi_{\alpha}(pbp + pcp) = \psi_{\alpha}(pap) \leq \psi_{\alpha}(pbp)  + \psi_{\alpha}(pcp). 
$$
Next, we consider the general case such that $b,c \in K(H)$ are not necessarily positive. 
By a theorem of Akemann-Anderson-Pedersen \cite{A-A-P}, which  generalizes a Thompson's Theorem \cite{To}, 
there exist isometries $u,v \in B(H)$ such that 
$$
| b + c | \leq u|b|u^* + v|c|v^*.
$$
Terefore we have that 
\begin{align*}
 \psi_{\alpha}(| b + c |) & \leq \psi_{\alpha}(u|b|u^* + v|c|v^*) \\
   & \leq \psi_{\alpha}(u|b|u^*) + \psi_{\alpha}(v|c|v^*) =  \psi_{\alpha}(|b|) + \psi_{\alpha}(|c|).
\end{align*}
This implies the desired triangle inequality.
\end{proof}

\begin{definition} \rm
 Let $\psi= \psi_{\alpha}$ be  a non-linear trace of Sugeno type associated with  
a monotone increasing function $\alpha: {\mathbb N}_0  \rightarrow [0, \infty)$  with $\alpha(0) = 0$ and 
$\alpha(1) > 0$.  
Since for $a \in K(H)$ we have $||a||_{\alpha}:= \psi_{\alpha}(|a|) < \infty$, 
{\it any} $a \in K(H)$ is a {\it trace class operator for a non-linear trace}  $\psi_{\alpha}$  {\it of Sugeno type}.  
Assume that  $\alpha$ is concave.
Although $||\lambda a||_{\alpha} = |\lambda| ||a||_{\alpha}$ does not hold in general,  
the triangle inequality above implies that 
$K(H)$ with $d(a,b) := ||a -b||_{\alpha}$ is a metric space. 
Since it holds the properties
\begin{itemize}
\item $\|a\|<\alpha(1)$ implies $\|a\|=\|a\|_\alpha$
\item $\|a\|_\alpha<\alpha(1)$ implies $\|a\|=\|a\|_\alpha$,
\end{itemize}
this metric space is isomorphic to the Banach space $(K(H), \|\cdot\|)$ as a topological space.

We shall extend $\psi_{\alpha}$ to $K(H)$ as follows: Let $a \in K(H)$. Consider the decomposition 
$$
a = \frac{1}{2}(a + a^*) + i  \ \frac{1}{2i}(a - a^*) = a_1 -a_2 + i(a_3 -a_4)
$$
with $a_1, a_2, a_3, a_4 \in (K(H))^+$ and $a_1a_2 = a_3a_4 = 0$.  
We define a non-linear trace 
$\psi_{\alpha} : K(H) \rightarrow  {\mathbb C}$ by 
$$
\psi_{\alpha}(a) := \psi_{\alpha}(a_1) - \psi_{\alpha}(a_2) + i( \psi_{\alpha}(a_3) - \psi_{\alpha}(a_4)),
$$
where we use the same symbol $\psi_{\alpha}$ for the extended non-linear trace. 
\end{definition}


\begin{thebibliography}{99}
\bibitem{A-A-P}
C. Akemann, J. Anderson and G. Pedersen, 
\textit{Triangle inequalities in operator algebras}, Linear and Multilinear Algebra 11(1982), 167-178.
\bibitem{A-C}
T. Ando and M. Choi, 
\textit{Non-linear completely positive maps}, 
in Aspects of Positivity in Functional Analysis, R. Nagel et al. eds., 
North-Holland, Amsterdam, 1986, 3-13.
\bibitem{Ar1}
W. B. Arveson, 
\textit{Subalgebras of $C^*$-algebras},
Acta Math., 123 (1969), 141-224. 
\bibitem{Ar2}
W. B. Arveson, 
\textit{Nonlinear states on $C^*$-algebras},
Operator Algebras and Mathematical Physics, Contemporary Math., 
Vol 62, Amer. Math. Soc., 1987, 283-343. 
\bibitem{B-N}
D. Bel\c{t}it\u{a} and K-H. Neeb, 
\textit{Nonlinear completely positive maps and dilation theory for real involutive algebras},
Integral Equations and Operator Theory 83 (2015), 517-562.
\bibitem{bhatia1}
R. Bhatia,
\textit{Matrix Analysis},
Graduate Texts in Mathematics 169,
Springer-Verlag NewYork, 1997.
\bibitem{bhatia2}
R. Bhatia,
\textit{Positive Definite Matrices},
Princeton University Press, 2007.

\bibitem{C-M-R}
A. Chateauneuf, M. Grabisch and A. Rico, 
\textit{Modeling attitudes toward uncertainty through the use of the Sugeno integral}, 
J. Math. Econom., 44 (2008), 1084--1099. 
\bibitem{Ch}
G. Choquet, 
\textit{Theory of capacities},
Ann. Inst. Fourier 5 (1953), 131--295. 
\bibitem{D-M}
A. Dadkhah and M. Moslehian,
\textit{Non-linear positive maps between $C^*$-algebras},
Linear and Multilinear Algebra, 68(2020), 1501--1517.
\bibitem{D-M-K}
A. Dadkhah, M. Moslehian and N. Kian, 
\textit{Continuity of non-linear positive maps between $C^*$-algebras}, 
preprint.
\bibitem{C-B}
L. M. de Campos and M. J. Bola\~{n}os, 
\textit{Characterization and comparison of Sugeno and Choquet integrals}, 
Fuzzy set and Systems 52 (1992), 61-67.
\bibitem{De}
C. Dellacherie, 
\textit{Quelques commentaires sur les prolongenments de capacit\'{e}s}, 
In: S\'{e}minaire de probabilit\'{e}s. V, Strasbourg, 1969-1970, Lecture Notes in Math., 
191, Springer, 1971, pp.77-81.
\bibitem{D-G}
D. Denneberg and M. Grabisch, 
\textit{Measure and integral with purely ordinal scales}, 
J. Math. Psych., 48 (2004), 15-27. 
\bibitem{dobrakov}
I. Dobrakov, 
\textit{On submeasures I}, 
Dissertationes Math. (Rozprawy Mat.), 112 (1974), 1-35. 
\bibitem{H}
F. Hiai, 
\textit{Matrix Analysis: Matrix Monotone Functions, Matrix Means, and Majorization}, 
Interdisciplinary Information Sci., 16(2010), 139--248.
\bibitem{H-N}
F. Hiai and Y. Nakamura, 
\textit{Extensions of nonlinear completely positive maps}, 
J. Math. Soc. Japan, 39 (1987), 367--384.
\bibitem{hiaipetz}
F. Hiai and D. Petz,
\textit{Introduction to matrix analysis and applications}, Universitext,
Springer, 2014.
\bibitem{H-O}
A. Honda and Y. Okazaki,
\textit{Theory of inclusion-exclusion integral},
Information Sciences 376 (2017), 136--147.
\bibitem{Horn-Johnson}
R. A. Horn and C. R. Johnson, 
\textit{Topics in Matrix Analysis},
Cambridge University Press, 1991.
\bibitem{kuboando}
F. Kubo and T. Ando,
\textit{Means of positive linear operators},
Math Ann. 246(1979/80), 205--224.
\bibitem{nagisawatatani}
M. Nagisa and Y. Watatani, 
\textit{Non-linear monotone positive maps}, 
J. Operator Theory, 87(2022), 203-228. 
\bibitem{nagisawatatani2}
M. Nagisa and Y. Watatani, 
\textit{Non-linear traces on matrix algebras, majorization, unitary invariant norms and 2-positivity}, 
arXiv: 2109.02031,  math FA, math OA, to appear in Analysis Mathematica. 
\bibitem{schmeidler}
D. Schmeidler, 
\textit{Integral representation without additivity}, 
Proc. Amer. Math. Soc., 97 (1986), 255-261.
\bibitem{simon}
B. Simon,
\textit{Trace ideals and Their Applications},
Mathematical Surveys and Monographs Vo. 120, Amer. Math. Soc, 2005.
\bibitem{Su}
M. Sugeno,
\textit{Fuzzy measures and Fuzzy integral},
a survey in M. Gupta, G. Saridis and B. Gaines (eds), Fuzzy automata  and 
decision processes, North Holland, Amstrerdam, 1977,  89-102.
\bibitem{To}
R.C.Thompson, 
\textit{Convex and Concave functions of singular values of matrix sums}, Pac. J. 
Math.66 (1976), 285-290. 
\end{thebibliography}
\end{document}